\documentclass[11pt,a4paper,dvipsnames]{article}

\usepackage{fullpage,amssymb,amsmath,amsfonts,amsthm,graphicx,color,float, mathtools,tikz} 
\usepackage{paralist} 
\usepackage[utf8]{inputenc}
\usepackage[english]{babel}
\usepackage[normalem]{ulem}
\usepackage{xcolor}
\usepackage[textsize=scriptsize, backgroundcolor=blue!20!white,bordercolor=red]{todonotes} 
\usepackage{dsfont} 
\usepackage{mathrsfs}
\usepackage{graphicx}
\usepackage{amsmath}
\usepackage{amsthm}
\usepackage{amsfonts}
\usepackage{amssymb}
\usepackage{multicol}
\usepackage{fancyhdr}
\usepackage{mathtools}

\usepackage{cite}

\usepackage{hyperref} 


\newtheorem{theorem}{Theorem}[section]
\newtheorem{Lemma}[theorem]{Lemma}

\newtheorem{corollary}[theorem]{Corollary}

\theoremstyle{definition}
\newtheorem{definition}[theorem]{Definition}
\newtheorem*{remark*}{Remark}


\newcommand{\R}{\mathbb R}

\newcommand{\N}{\mathbb N}

\newcommand{\norm}[1]{\lVert#1\rVert}

\numberwithin{equation}{section}
\allowdisplaybreaks 

\title{On the mean-field limit for the Vlasov-Poisson system in two dimensions}
\author{Corresponding author: Manuela Feistl-Held\thanks{Fachbereich ANG, Technische Hochschule Rosenheim, Hochschulstraße 1, 83024 Rosenheim, Germany. Email: Manuela.Feistl-Held@th-rosenheim.de}\\  
	Peter Pickl\thanks{Fachbereich Mathematik, Eberhard Karls Universität Tübingen, Auf der Morgenstelle 10, 72076 Tübingen, Germany. Email: p.pickl@uni-tuebingen.de}}

\begin{document}
\maketitle

\begin{abstract}
We present a probabilistic proof of the mean-field limit and propagation of chaos of a classical N-particle system in two dimensions with Coulomb interaction force of the form $f^N(q)=\pm\frac{q}{|q|^2}$ and $N$-dependent cut-off at $|q|>N^{-2}$.
In particular, for typical initial data, we show convergence of the Newtonian trajectories to the characteristics of the Vlasov-Poisson system. The proof is based on a Gronwall estimate for the maximal distance between the exact microscopic dynamics and the approximate mean-field dynamics. Thus our result leads to a derivation of the Vlasov-Poisson equation from the microscopic $N$-particle dynamics with force term arbitrary close to the physically relevant Coulomb force.

\end{abstract}

\section{Introduction} \label{sec:intro}
In this work, we provide a microscopic derivation of the Vlasov–Poisson system in two dimensions, which models the evolution of a plasma consisting of identically charged particles interacting through electrostatic or gravitational forces.
We therefore consider a system of $N$ interacting particles governed by Newtonian dynamics. The state of the system at any given time is represented by a trajectory in phase space $\mathbb{R}^{4N}$, denoted by $X = (Q, P) = (q_1, \ldots, q_N, p_1, \ldots, p_N) \in \mathbb{R}^{4N}$, where each $q_j \in \mathbb{R}^2$ corresponds to the position of the $j$-th particle, and each $p_j \in \mathbb{R}^2$ denotes its momentum.
The evolution of the system is given by the coupled differential equations
\begin{align}
i\in \{1,...,N\},\ \begin{cases}
& {\dot{q}}_i=\frac{1}{m}p_i\\
& \dot{p}_i=\frac{1}{N}\sum_{j\neq i}f^N(q_i-q_j) 
\end{cases} \label{Newtoneq}
\end{align}
with particle mass $m>0$, which will always be set equal to $1$ in our considerations.
We consider a regularized Coulomb interaction in two dimensions with a cut-off at scale $N^{-\beta}$.
\begin{definition}\label{force f}
For $N\in \N\cup\lbrace \infty \rbrace$ the interaction force is given by 
\begin{align*}
f^{N}:\R^{2}\rightarrow\R^{2}, q\mapsto\begin{cases}
aN^{2\beta}q& \text{if}\  |q|\leq N^{-\beta} \\
a \frac{q}{|q|^{2}} & \text{if}\  |q|> N^{-\beta} 
\end{cases}
\end{align*}
for $2\geq\beta>0$.
\end{definition} Note, that we do not have any further constraints on the choice of $a$. In particular we consider both, attractive and repulsive interactions. We  will use the notation $F^N:\R^{4N}\rightarrow\R^{2N}$ for the total force of the system. Thus the j'th compontent of $F^N$ gives the force exhibited on a single coordinate $j$ $$\left(F^N(X)\right)_j\coloneqq  \sum_{i\neq j}\frac{1}{N}f^N(q_i-q_j).$$
We consider the system in the mean-field scaling regime, ensuring that the total mass of the system remains of order one. The prefactor $\frac{1}{N}$ in the interaction term serves as the corresponding scaling factor and is widely adopted in the literature for this purpose (see, e.g., \cite{Jabin lecture notes}).
We derive the Vlasov–Poisson equation from the microscopic Newtonian dynamics of an $N$-particle system.
To this end, we compare the microscopic time evolution $\Psi^N_{t,s}$ of the $N$-particle system with an effective one-particle dynamics governed by the Vlasov flow $(\varphi^N_{t,s})_{t,s \in \mathbb{R}} : \mathbb{R}^2 \to \mathbb{R}^2$, and we establish the convergence of $\Psi^N_{t,s}$ to a product of $\varphi^N_{t,s}$ in the limit $N \to \infty$, in a suitable sense.
Further we obtain weak convergence of the $s$-particle marginals of the $N$-particle system towards the $s$-fold product of solutions to the Vlasov equation, which is commonly referred to as the propagation of molecular chaos.

\subsection{Previous results}
The first mathematical derivation of the Vlasov equation was achieved for systems with Lipschitz-continuous forces. To the best of our knowledge, the first mathematically rigorous derivation dates back to the 1970s and was presented independently by Neunzert and Wick in 1974~\cite{NeunzertWick}, as well as by Braun and Hepp~\cite{BraunHepp,Dobrushin}and Spoon~\cite{SpohnBook}. They proved that if the initial particle distribution is close to a given density $k_0 $, then the particle distribution remains close to the solution $k_t $ of the Vlasov equation at later times.

One of the main challenges in the singular case such as Coulomb or Newtonian interactions is the control of particle clustering, see for instance~\cite{SpohnBook}.
Hauray and Jabin ~\cite{Hauray2007,Hauray2015} were able to treat singular interaction forces scaling like $1/\vert q \vert^{\lambda}$ in three dimensions for ${\lambda} < 1$, and later extended their analysis to the physically more relevant case of $\lambda$ slightly below $2$, under the assumption of a lower bound on the cut-off at $q = N^{-1/6}$  for the full Coulomb case. However, their results remain in the deterministic setting, as they had to prescribe sufficiently generic initial configurations depending on the respective $N$-particle law.
The last deterministic result we want to mention here is~\cite{Kiessling}, which applies to repulsive pair interactions and does not require a cut-off, but instead assumes a bound on the maximal forces in the microscopic system.
Boers and Pickl~\cite{BoersPickl} proposed a novel method for deriving mean-field equations, specifically designed for stochastic initial conditions and interaction forces scaling like $|x|^{-3\lambda + 1}$ with $5/6 < \lambda < 1$. They were able to achieve a cut-off as small as $N^{-\frac{1}{3}}$, which corresponds to the typical interparticle distance. Subsequently, Lazarovici and Pickl~\cite{Dustin} extended the approach from~\cite{BoersPickl} by exploiting the second-order nature of the dynamics and introducing an anisotropic scaling of the underlying metric, allowing them to include the Coulomb singularity. This led to a microscopic derivation of the Vlasov--Poisson equations with a cut-off of $N^{-\delta}$ for some $0 < \delta < \frac{1}{3}$.
More recently, Grass ~\cite{grass} further reduced the cut-off parameter to $N^{-\frac{7}{18}}$ in $d=3$, by refining the analysis of possible collisions and leveraging the second-order structure of the dynamics. This method was then extended by Feistl-Held and Pickl, who succeeded in lowering the cut-off to $N^{-\frac{5}{12}}$.
All of these results were established in the case of spatial dimension $ d = 3$.

In dimension $d=2$, results concerning the derivation of the Vlasov--Fokker--Planck equation were obtained by Bresch, Jabin and Soler in~\cite{BRESCH JABIN SOLER}. They exploited the regularizing effects of Brownian motion to control singular interactions and focused on the repulsive case.

The strategy which we shall present in the following uses stochastic initial conditions. For Coulomb interactions there are in fact deterministic initial conditions for which the dynamics are not described by the Vlasov equation. However, such `bad' initial conditions of particles may, while not impossible, be very atypical in the sense that the respective volume in phase space is small.
More precisely, we prove that the measure of the set where the maximal distance of the Newtonian trajectory and the mean-field trajectory exceeds $N^{-\gamma}$ for $ \gamma>0$ decreases exponentially with the number of particles $N$, as the number of particles grows to infinity.

\subsection{Dynamics of the Newtonian and of the effective system}
By introducing the N-particle force defined in Definition \ref{force f} we can characterize the Newtonian flow $\Psi_{t,s}^N(X)$, which describes the phase space trajectory of $N$ particles, where the first component $\Psi_{t,s}^{1,N}(X)$ corresponds to their positions in physical space, and the second component $\Psi_{t,s}^{2,N}(X)$ corresponds to their velocities. 
\begin{definition}\label{Def:Newtonflow}
The Newtonian flow $\Psi_{t,s}^{N}(X)=\left(\Psi_{t,s}^{1,N}(X)),\Psi_{t,s}^{2,N}(X)\right)$ on $\R^{4N}$ is defined by the solution of
\begin{align}
\frac{d}{dt} \Psi_{t,s}^{N}(X)=\left(\Psi_{t,s}^{2,N}(X),F(\Psi_{t,s}^{1,N}(X))\right) \in \R^{2N}\times\mathbb{R}^{2N} \ \ \ \text{with} \  \Psi_{s,s}^{N}(X)=X. 
\end{align}
\end{definition}
As the vector field is Lipschitz for fixed $N$ we have global existence and uniqueness of solutions and hence a N-particle flow.

The macroscopic law of motion for the particle density is given by a generalisation of the continuity equation, the Vlasov-Poisson equation. This equation describes the time evolution of the distribution function $k:\R^4\rightarrow\R_0^{+}$ of a system of particles that interact through long-range, mean-field forces, like gravity or electrostatics without collisions.
It is given by
\begin{equation}\label{Vlasov}
	\partial_t k + p \cdot \nabla_q k + \Bigl(f^N * \tilde{k}_t \Bigr) \cdot \nabla_p k = 0, 
\end{equation}
with $\tilde{k}_t(q)=\int k_t(q,p)d^2p$ and $N \in \N\cup\lbrace \infty \rbrace$.
Global existence in two space dimensions was established in the work of Okabe and Ukai \cite{ocabe} and Wollman \cite{wollman}, even for singular interactions.
For a fixed initial distribution $k_0 \in L^\infty(\R^2\times \R^2)$ with $k_0 \geq 0$ and $\int k_t(q,p)d^2p=\tilde{k}_t(q)$ we denote by $k^N_t$ the unique solution of the Vlasov-Poisson equation \eqref{Vlasov} with initial datum $k^N_t(0, \cdot,\cdot) = k_0$.\\
The characteristics of Vlasov-Poisson equation similar \eqref{Newtoneq} are given by the following system of Newtonian differential equations
\begin{align}
 \begin{cases}
&\dot{\bar{q}}_i=\bar{p}_i \\
&\dot{\bar{p}}_i=\bar{f}_{t}^{N}:=f^{N}*\tilde{k}_{t}^{N} \label{eff.flow}
\end{cases}
\end{align}
with spatial density $\tilde{k}_t^{N}:\R\times \R^{2}\rightarrow \R_{0}^{+}$ given by 
$\tilde{k}_{t}^{N}(q):=\int k_t^{N}(p,q) d^{2}p$.

The system \eqref{eff.flow} is uniquely solvable on any interval $[0,T]$ and this provides us the flow $(\varphi^{\infty}_{s,t})_{s,t\in \R}$. $(\varphi_{s,t})_{s,t\in \R}=(\varphi^1_{\cdot,s}(x),{\varphi^2_{\cdot,s}(x)})$ solves the equations \eqref{eff.flow} where $\varphi_{s,s}(x)=x$ for any $x\in \R^{2}$ and $s\in \R$.
With this construction we get a new trajectory which is influenced by the mean-field force and not by the pair interaction force like in the Newtonian system.
Now we have two trajectories which we will compare and later show that they are typically close to each other.
To this end, we consider the lift of $\varphi^N_{t,s}(\cdot)$ to the $N$-particle phase-space, which we denote by $\Phi_{t,s}^N$. 
Denoting by $\bar{F}:\R^{2N}\to\R^{2N}$ the lift of the mean field force to the $N$-particle phase-space, i.e. $(\overline{F}_t(Z))_i := f^{N}*\tilde{k}_{t}^{N}(x_i)$ for$ X=(x_1,...,x_N)$ we finally define the mean-field flow analogously to Definition \ref{Def:Newtonflow}.
\begin{definition}\label{Def:Meanfieldflow}
The respective effective flow $\Phi_{t,s}^{N}=\left(\Phi_{t,s}^{1,N},\Phi_{t,s}^{2,N}\right)=\left(\varphi_{t,s}^{N}\right)^{\otimes N}$ satisfies 
\begin{align*}
\frac{d}{dt}\Phi_{t,s}^{N}(X)=\left(\Phi_{t,s}^{2,N}(X),\bar{F}(\Phi_{t,s}^{1,N}(X)\right)
\end{align*}
with $\Phi_{s,s}^{N}(X)=X.$
\end{definition}
Here, $\Phi_{t,s}^{1,N}(X)$ corresponds the positions in physical space, and the second component $\Phi_{t,s}^{2,N}(X)$ corresponds to the velocity.

In contrast to the Newtonian Flow $\Psi_{t,s}^{N}$, the effective flow $\Phi_{t,s}^{N}$ conserves independence, due to the fact that $\Phi^N$ consists of $N$ copies of $\varphi_t$. 

In summary, for fixed $k_0$ and $N \in \N$, we consider for any initial configuration $X \in \R^{4N}$  two different time-evolutions: $\Psi^N_{t,0}(X)$, given by the microscopic equations and $\Phi^N_{t,0}(X)$, given by the time-dependent mean-field force generated by $f^N$. We are going to show that for typical $X$, the two time-evolutions are close in an appropriate sense. 
In other words, we have non-linear time-evolution in which $\varphi^N_{t,s}(\cdot\,; k_0)$ is the one-particle flow induced by the mean-field dynamics with initial distribution $k_0$, while, in turn, $k_0$ is transported with the flow $\varphi^N_{t,s}$. 
\subsection{Statement of the results}
In the following section we show that the $N$-particle trajectory $\Psi_t$ starting from $\Psi_0$ (i.i.d. with the common density $k_0$) remains close to the mean-field trajectory $\Phi_t$ with the same initial configuration $\Psi_0=\Phi_0$ during any finite time $[0,T]$ and so the microscopic and the macroscopic approach describe the same system. 

\begin{theorem} \label{maintheorem}
Let $T>0$, $k_0\in L^1(\R^{4})$ be a probability density fulfilling
$$
\sup_{N\in\N}\sup_{0\leq s\leq T}||\tilde{k}^N_s||_{\infty}\leq\infty
$$ and let $(\Phi^{\infty}_{t,s})_{t, s\in \R}$ be the related lifted effective flow defined in Definition \ref{Def:Meanfieldflow} as well as $({\Psi}^{N}_{t,s})_{t,s\in \R}$ the $N$-particle flow defined in Definition \ref{Def:Newtonflow}. 
 If $\sigma,\varepsilon>0$ and $0<\beta\leq2$, then there exists a $C>0$ such that for all $N\in \N$ it holds that
\begin{align}
\mathbb{P}\left(X\in \R^{6N}:\sup_{0\le s \le T}\left|\Psi_{s,0}^{N}(X)-{\Phi_{s,0}^{\infty}}(X)\right|_{\infty}>  N^{-\frac{2}{5}+2\sigma} \right)\le C N^{-\frac{2}{5}+\varepsilon}.
\end{align}
\end{theorem}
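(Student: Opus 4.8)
The plan is to run a Gronwall argument on the stochastic quantity
\[
J_t(X)\coloneqq \min\left\{1,\ N^{\frac{2}{5}-2\sigma}\sup_{0\le s\le t}\left|\Psi_{s,0}^{N}(X)-\Phi_{s,0}^{\infty}(X)\right|_{\infty}\right\},
\]
which is designed so that $J_t$ is bounded by $1$ and so that the event in the theorem is $\{J_T=1\}$; by Markov's inequality it then suffices to show $\E[J_T]\le C N^{-\frac{2}{5}+\varepsilon}$. First I would write the difference of the two flows using the Duhamel/variation-of-constants representation: since both $\Psi$ and $\Phi$ solve second-order ODEs with the same initial data, the position difference at time $s$ is a double time integral of the force difference $F^N(\Psi^{1}_{r,0}(X))-\bar F^N_r(\Phi^{1}_{r,0}(X))$, and the velocity difference is a single time integral. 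I would split this force difference into two pieces: (i) the \emph{stability} term $F^N(\Psi^{1}_{r,0})-F^N(\Phi^{1}_{r,0})$, comparing the true empirical force at the two configurations, and (ii) the \emph{law of large numbers} term $F^N(\Phi^{1}_{r,0})-\bar F^N_r(\Phi^{1}_{r,0})$, comparing the empirical force evaluated along the mean-field flow with its expectation $f^N*\tilde k^N_r$. Because $\Phi^N=(\varphi^N)^{\otimes N}$ preserves independence, along the mean-field flow the particles remain i.i.d.\ with density $k^N_r$, so term (ii) is a centered sum of independent bounded (by the cut-off, $|f^N|\lesssim N^{\beta}$) vectors; a concentration inequality of Bernstein/Hoeffding type, together with the variance bound coming from $\sup_{s,N}\|\tilde k^N_s\|_\infty<\infty$ (which controls $\E|f^N*\tilde k^N - f^N(\varphi-\cdot)|^2$ uniformly in the cut-off), gives that $\sup_{s\le T}$ of term (ii) exceeds $N^{-2/5+\sigma}$ only on a set of probability $\le C N^{-2/5+\varepsilon}$ — this is where the exponential-in-$N$ smallness, hidden inside the polynomial bound, enters. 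On the complement, term (ii) contributes a deterministic $O(N^{-2/5+\sigma})$ forcing to the Gronwall inequality.

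The core of the argument is term (i), the force-stability estimate: I want a bound of the form
\[
\left|F^N(\Psi^{1}_{r,0}(X))-F^N(\Phi^{1}_{r,0}(X))\right|_\infty \le C\,\big(\text{something}\big)\cdot \left|\Psi^{1}_{r,0}(X)-\Phi^{1}_{r,0}(X)\right|_\infty
\]
valid with overwhelming probability, where the prefactor is at worst a logarithmic or small polynomial factor in $N$. The naive Lipschitz constant of $f^N$ is $a N^{2\beta}$, which is far too large; the standard remedy (going back to Boers--Pickl and Lazarovici--Pickl) is to write
\[
\frac1N\sum_{j\neq i}\big(f^N(q_i-q_j)-f^N(\bar q_i-\bar q_j)\big)
\]
and control it not by the worst-case Lipschitz constant but by an \emph{averaged} modulus of continuity of $f^N$: for the Coulomb kernel $q/|q|^2$ in $d=2$ one has $|f^N(x)-f^N(y)|\lesssim |x-y|\big(|x|^{-2}\wedge N^{2\beta}\big)$ away from the cut-off, and the key probabilistic input is that for a configuration distributed (approximately) like $k_0^{\otimes N}$ the empirical average $\frac1N\sum_{j\neq i}\min\{|q_i-q_j|^{-2},N^{2\beta}\}$ is, uniformly in $i$ and with probability $1-N^{-\text{large}}$, bounded by $C\log N$ (or a small power of $N$). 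One then has to transport this ``good configuration'' property from time $0$ to time $r$: this is done by running the same Gronwall bootstrap — as long as $J_r<1$ the Newtonian configuration stays within $N^{-2/5+2\sigma}$ of the mean-field one, and since along the mean-field flow the particles stay i.i.d.\ with uniformly bounded spatial density, the collision/clustering estimates remain valid up to harmless constants. Combining, on the good set the forcing in the Gronwall inequality is $\lesssim (\log N)\int_0^s J_r\,dr + N^{-2/5+\sigma}$, whence $J_s\le C N^{-2/5+\sigma}e^{C(\log N) T}$; choosing the exponents so that $N^{\sigma}$ beats $N^{C(\log N)T}$... no — rather, one works with the anisotropic metric of Lazarovici--Pickl so that the second-order structure replaces the bad $\log N$ or power by a true constant, yielding $J_s\le C N^{-2/5+2\sigma}$ on the good set, i.e.\ $J_T\neq 1$ there, and $\E[J_T]\le \p(\text{bad set})\le C N^{-2/5+\varepsilon}$.

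In a bit more detail on the ordering: (1) set up $J_t$ and reduce to $\E[J_T]$ via Markov; (2) Duhamel formula for $\Psi-\Phi$ and the split into terms (i) and (ii); (3) the concentration/LLN estimate for term (ii) along the independence-preserving flow $\Phi^N$, using $\sup\|\tilde k^N_s\|_\infty<\infty$ for the variance and the cut-off bound $|f^N|\le a N^{\beta}$ for the range, producing the probability bound $C N^{-2/5+\varepsilon}$ for the bad event; (4) the deterministic clustering/collision estimate along the mean-field flow: with high probability no particle has too many close neighbours, made quantitative via the anisotropic distance and the second-order dynamics as in \cite{Dustin,grass}; (5) transport of this good-configuration property to all times $s\le T$ by the bootstrap, using $\{J_s<1\}$ to keep $\Psi$ close to $\Phi$; (6) assemble the Gronwall inequality for $J_s$ on the good set, integrate, and verify that the chosen cut-off exponent relation ($\beta\le 2$) and the loss exponents $2\sigma$ vs.\ the concentration exponent make the right-hand side stay below $N^{-2/5+2\sigma}$, forcing $J_T=0$ on the good set; (7) conclude $\p(J_T=1)\le\p(\text{bad set})\le CN^{-2/5+\varepsilon}$.

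The main obstacle is step (4)–(5): controlling particle clustering for the (nearly) Coulomb force in $d=2$ and showing that the relevant empirical sums $\frac1N\sum_{j\neq i}|q_i-q_j|^{-2}$ (and their anisotropic refinements) stay bounded uniformly in $i$ and in time with probability $1-N^{-\text{large}}$, and doing so with losses small enough that the final exponent is $-2/5$ rather than something worse. The singularity $|q|^{-2}$ in two dimensions is borderline (it is exactly the scaling at which $\int |q|^{-2}\,dq$ diverges logarithmically at the origin), so the minimal-distance and few-close-neighbours estimates must be squeezed carefully, and the cut-off at $N^{-\beta}$ with $\beta$ as large as $2$ means the Lipschitz constant $N^{2\beta}$ can be as large as $N^4$ — only the averaging and the second-order gain make this survivable. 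Everything else (the Duhamel bookkeeping, the concentration inequality, the Gronwall integration) is routine given the earlier setup.
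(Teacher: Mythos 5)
Your overall architecture (stopping time/bootstrap, split of the force error into a stability part and a fluctuation part, law of large numbers along the independence-preserving mean-field flow, clustering control, Gronwall) matches the paper's strategy, but two of your quantitative mechanisms would not close as described. First, you attribute the final probability $CN^{-2/5+\varepsilon}$ to the concentration inequality for the fluctuation term, and you propose Bernstein/Hoeffding for a centered sum of vectors ``bounded by the cut-off, $|f^N|\lesssim N^{\beta}$''. With range $N^{\beta}=N^{2}$ and variance of order $\ln N$, Bernstein is vacuous at threshold $N^{-2/5}$: the exponent is $\sim -N\cdot N^{-4/5}/N^{8/5}\to 0$. The paper's fluctuation estimate works only because it is applied to the \emph{time-integrated} interaction $\int_0^t f^N(\varphi_{s,0}(Y)-\varphi_{s,0}(Z))\,ds$, whose sup-norm is reduced (Corollary \ref{corollary phi and  psi}) to $C(N^{r_b}+N^{v_b})=CN^{3/5+\sigma}$ \emph{after excluding pairs that come close with small relative velocity}; only then does the high-moment bound of Theorem \ref{Theorem LolN} (over the collision-class cover, with variance controlled by Lemma \ref{Prob of group}) give $C_\gamma N^{-\gamma}$ for arbitrary $\gamma$. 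The exclusion of those pairs is exactly what costs the theorem its exponent: the dominant contribution to the failure probability is $\mathbb{P}(\mathcal{B}_5^{N,\sigma})\le \binom{N}{2}\,\mathbb{P}(X_2\notin G^N(X_1))\le CN^{2}N^{-b_r-3b_v}=CN^{-2/5-4\sigma}$, i.e.\ the probability that \emph{some} pair undergoes a hard collision in the mean-field flow, computed from the two-dimensional collision estimate $\mathbb{P}(v_{\mathrm{rel}}\le v_{\mathrm{cut}}\text{ and hit})\le Crv_{\mathrm{cut}}^3$. Your step (4) treats clustering as a high-probability ($1-N^{-\text{large}}$) side condition, which inverts the actual logic and leaves the origin of the exponent $-2/5$ unexplained.

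Second, your resolution of the $e^{C(\ln N)T}=N^{CT}$ Gronwall blow-up by invoking ``the anisotropic metric of Lazarovici--Pickl'' is not what the paper does and is not substantiated: the Gronwall coefficient $\int g^N*\widetilde{k}^N\lesssim \ln N$ is intrinsic to the two-dimensional Coulomb kernel and the anisotropic rescaling does not remove it. The paper instead subdivides $[0,T]$ into $O(\sqrt{\ln N})$ intervals of length $t^*=C/\sqrt{\ln N}$ and applies the iterated-integral Gronwall lemma (Lemma \ref{Gronwall Lemma}) on each, so that the accumulated factor is $(2e^2+1)^{\lceil \sqrt{\ln N}\,T/C\rceil}=e^{O(\sqrt{\ln N})}=N^{o(1)}$, which is absorbed into $N^{2\sigma}$. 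Without either this device or the Corollary's pathwise bound $\int_0^T g^N(\Psi^1_i-\Psi^1_j)\,ds\lesssim (c_N\Delta v)^{-1}$ for the real trajectories (which requires transporting the mean-field relative-velocity information to the microscopic system, part (ii) of the Corollary), your stability estimate in step (6) does not yield a bound below $N^{-2/5+2\sigma}$ and the bootstrap does not close.
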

We take advantage of the fact that the potential is logarithmically singular in dimension $d=2$.
Further we analyse the advantages of the second-order nature of the equation in order to prove the theorem. To this end, we transfer more information from the mean-field system to the microscopic particle system, following the approach introduced by Grass in~\cite{grass}. As long as the true and their related mean-field particles are close in phase space, the types of their collisions are expected to be similar. 
Therefore we will divide the particles into sets, a good and a bad set, depending on their mean-field particle partners.
If for certain pairs of particles comparably strong interactions leading to comparably strong correlations are expected, then - depending on their distance and their relative velocity - they will be labelled bad.
For ease of estimations we formulate the notion of being bad  at the hand of the mean-field trajectories. The influence of the true interaction might be estimated wrongly, but, as long as the true trajectory are very close tot the mean-field trajectories, this error will be small.

An advantage is that the number of bad particles is typically much smaller than the total number of particles $N $. If one now weights the contribution to the force term by the probability of a particle being bad, significantly improved estimates can be obtained.

Additionally, by using the integral version of Gronwalls Lemma we will make full use of the second order nature of the dynamics.
If two particle come exceptionally close to each other, one can expect a correspondingly large deviation of the true and mean-field trajectory.
However, for the vast majority, these deviations are typically only of a very limited duration.
In order not to overestimate the deviations between them, it makes sense to compare the dynamics on longer time periods.
The idea of dividing the particles into sets and using the integral version of Gronwalls Lemma were previously implemented in the work of Grass \cite{grass} for two particle sets, a so called good and a bad one in dimension tree.
A heuristic introduction of this technique can be found in \cite{FeistlPickl} Chapter 3.0.3. In this case, of two dimensions, the Bolzmannzylinder such as probability of a hit has to be adjusted and is now given by
\begin{align*}
\mathbb{P}\left(v_{rel}\leq v_{cut} \ \text{and hit}\right)\leq Crv_{cut}^3.
\end{align*}
This heuristically estimated probability will be rigorously derived in the next section.
The proof and notation are based on the work of Grass \cite{grass}.
While his results pertain to the three-dimensional case, some of his estimates can be adapted to our two-dimensional setting.
\subsection{Preliminary studies}
In the following preliminary studies, we provide several essential lemmas and corollaries required for the main results.
Constants appearing in this paper will generically be denoted by $C$.
More precisely we will not distinguish constants appearing in a sequence of estimates, i.e. in an inequality chain $a\le Cb\le Cd$, the constants $C$ may differ.
The following lemmata and corollaries are slight adaptations of the arguments in Grass \cite{grass}, modified to apply in dimension two instead of three.
\begin{Lemma} \label{Lemma distance same order}
Let $T>0$ and $k_0$ be a probability density fulfilling the assumptions of Theorem \ref{maintheorem} where $(\varphi^{N,c}_{t,s})_{t, s\in \R}$ shall be the related effective flow defined in Definition \ref{Def:Meanfieldflow}. Then there exist a $C>0$ such that for all configurations $X,Y\in \R^4,\ N\in \N\cup \{\infty\}$ and $t,t_0\in [0,T]$ it holds that 
\begin{align*}
|\varphi_{t,t_0}^{N}(X)-\varphi_{t,t_0}^{N}(Y)| \le |X-Y|e^{C|t-t_0|}
\end{align*}
and 
\begin{align*}
|f_c^N*\widetilde{k}^{N}_t(^1X)-f^N_c*\widetilde{k}^{N}_t(^1Y)| \le C|^1X-{^1Y}|.
\end{align*}
\end{Lemma}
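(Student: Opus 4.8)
The plan is to establish the second assertion first — Lipschitz continuity of the mean-field force $\bar f^{N}_t=f^{N}*\widetilde k^{N}_t$ is the substantial statement — and then deduce the flow estimate from it by a Gronwall argument. Granting the force bound, set $D(t):=\big|\varphi^{N}_{t,t_0}(X)-\varphi^{N}_{t,t_0}(Y)\big|$ for the Euclidean distance in $\R^{4}$ of the two characteristics. By \eqref{eff.flow}, the position part of $\tfrac{d}{dt}\big(\varphi^{N}_{t,t_0}(X)-\varphi^{N}_{t,t_0}(Y)\big)$ is the velocity difference, of norm $\le D(t)$, while the velocity part is $\bar f^{N}_t\big({}^{1}\varphi^{N}_{t,t_0}(X)\big)-\bar f^{N}_t\big({}^{1}\varphi^{N}_{t,t_0}(Y)\big)$, of norm $\le C\big|{}^{1}\varphi^{N}_{t,t_0}(X)-{}^{1}\varphi^{N}_{t,t_0}(Y)\big|\le C D(t)$ by the second estimate. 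Hence $\big|\tfrac{d}{dt}D(t)\big|\le (1+C)D(t)$ for $t\ge t_0$ (and symmetrically for $t\le t_0$, by reversing time), and Gronwall's lemma yields $D(t)\le D(t_0)e^{C|t-t_0|}=|X-Y|e^{C|t-t_0|}$, the constant being inherited from the force estimate and hence uniform in $N\in\N\cup\{\infty\}$.

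For the force estimate, write, with $x={}^{1}X$, $y={}^{1}Y$ and $d:=|x-y|$,
\[
f^{N}*\widetilde k^{N}_t(x)-f^{N}*\widetilde k^{N}_t(y)=\int_{\R^{2}}\big(f^{N}(x-z)-f^{N}(y-z)\big)\widetilde k^{N}_t(z)\,dz ,
\]
and split $\R^{2}$ into the near region $A:=\{z:\min(|x-z|,|y-z|)\le 2d\}$ and its complement $B$. On $A$ I would use that Definition \ref{force f} gives $|f^{N}(q)|\le a/|q|$ for every $q$ (the linear cut-off branch never exceeds the Coulomb branch) together with the two-dimensional identity $\int_{|q|\le r}|q|^{-1}\,dq=2\pi r$; since $A\subset B_{3d}(x)\cup B_{3d}(y)$, the $A$-contribution is $\le C\,d\sup_{0\le s\le T}\|\widetilde k^{N}_s\|_{\infty}$. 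This linear-in-$d$ estimate is exactly where the merely logarithmic singularity of the two-dimensional potential is exploited. On $B$ both arguments of $f^{N}$ have modulus $\ge 2d$, so the segment from $x-z$ to $y-z$ stays at distance $\ge|x-z|/2$ from the origin; combining the mean value theorem with $|\nabla f^{N}(w)|\le C/|w|^{2}$ (valid on both sides of the cut-off radius) gives $|f^{N}(x-z)-f^{N}(y-z)|\le C\,d/|x-z|^{2}$ there — except on the thin shell where the segment crosses $|w|=N^{-\beta}$, of measure $O(N^{-\beta}d)$, where one instead bounds the difference by $|f^{N}(x-z)|+|f^{N}(y-z)|\le C N^{\beta}$. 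Integrating over $B$ — using $\int_{|q|\le N^{-\beta}}|\nabla f^{N}|\,dq\le C$ at short range, $\|\widetilde k^{N}_t\|_{\infty}$ on $N^{-\beta}\le|x-z|\le 1$, and the mass normalisation $\|\widetilde k^{N}_t\|_{1}=1$ for $|x-z|\ge 1$ — and adding the $A$-part gives the claimed bound $C\,d$.

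The crux is the long-range part over $B$: because $\nabla f^{N}$ is a borderline (two-dimensional Calderón–Zygmund) kernel, the naive estimate $\int_{2d<|x-z|<1}|x-z|^{-2}\widetilde k^{N}_t(z)\,dz$ is only $\lesssim\|\widetilde k^{N}_t\|_{\infty}\log(1/d)$, so it is the cut-off at scale $N^{-\beta}$ that restores a bound clean and linear in $d$: the genuinely singular short-range part becomes exactly linear, and the residual logarithm is of size at most $\beta\log N$. The two-dimensional setting is essential here — only because $f^{N}$ itself is merely sub-critically singular, $|f^{N}(q)|\le a/|q|$, does the near-field integral return the factor $d$ rather than a constant. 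The remaining points (differentiating under the convolution, finiteness for fixed $N$, the time-reversed Gronwall step) are routine, and the $N=\infty$ case is handled analogously with $|f^{\infty}(q)|\le a/|q|$ and $|\nabla f^{\infty}(q)|\le C/|q|^{2}$ replacing the cut-off bounds.
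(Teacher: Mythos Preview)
Your reduction of the flow estimate to the Lipschitz bound on $\bar f^N_t$ via Gronwall is fine and mirrors the paper. The substantive gap is in the force estimate: you yourself identify that the intermediate region $\{2d<|x-z|<1\}$ produces, after cutting off at $N^{-\beta}$, a residual factor $\beta\log N$. But the lemma asserts a single constant $C$ valid for \emph{all} $N\in\N\cup\{\infty\}$, so a bound of the form $C(1+\beta\log N)\,d$ is not admissible. Worse, for $N=\infty$ there is no cut-off at all, and your argument then yields only the log-Lipschitz estimate $C\,d\log(1/d)$; the claim that ``the $N=\infty$ case is handled analogously'' is therefore incorrect. The kernel $\nabla f^N$ really is borderline in two dimensions, and no amount of splitting into near/far regions using only $\|\widetilde k^N_t\|_\infty$ and $\|\widetilde k^N_t\|_1$ will remove the logarithm.

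The paper overcomes this by a genuinely different mechanism that you are missing: on an intermediate annulus $\{2\max(N^{-\beta},d)\le |{}^1Z-{}^1X|\le R\}$ one first subtracts the constant-in-space density $k^N_t({}^1X,{}^2Z)$; by the radial symmetry of the annulus (Newton's shell theorem in $\R^2$) the integrals of $f^N({}^1X-{}^1Z)$ and $f^N({}^1Y-{}^1Z)$ against this constant cancel separately. The remaining difference $k^N_t(Z)-k^N_t({}^1X,{}^2Z)$ is controlled by $|\nabla k_0|$ together with the Lipschitz constant $\Delta^N(t)$ of the flow itself, which supplies an extra factor $|{}^1Z-{}^1X|$ and converts the critical $|q|^{-2}$ kernel into an integrable $|q|^{-1}$. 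This yields $|\bar f^N_t({}^1X)-\bar f^N_t({}^1Y)|\le C\big(\ln^+(1/R)+\Delta^N(t)R\big)\,d$; choosing $R=1/\Delta^N(t)$ and closing a bootstrap inequality for $\Delta^N$ then gives an $N$-independent bound. In short, the proof needs the regularity of $k_0$ and a cancellation/bootstrap argument, not just the size of $\widetilde k^N_t$.
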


\begin{proof}
We will use the following short notation $ X=({^1X},{^2X}) $ with ${^1X}\in\mathbb{R}^2 $ and $ {^2X}\in\mathbb{R}^2 $.
For $ N\in \mathbb{N}\cup \{\infty\} $, $ X,Y\in \mathbb{R}^4 $ and $ R>0 $ we define the spherical shell around $ {^1X} $  in $ \mathbb{R}^2 $ by
\begin{align*}
\mu^{R,N}_{X,Y} :=
\begin{cases}
\Bigl\{ Z \in \mathbb{R}^4 \;\Bigl|\; 2\cdot\max\Bigl(N^{-\beta}, \lvert {^1X} - {^1Y} \rvert\Bigr) \le \lvert {^1Z} - {^1X} \rvert \le R \Bigr\} & \text{für } N < \infty, \\[1mm]
\Bigl\{ Z \in \mathbb{R}^4 \;\Bigl|\; 2 \cdot\lvert {^1X} - {^1Y} \rvert \le \lvert {^1Z} - {^1X} \rvert \le R \Bigr\} & \text{für } N = \infty.
\end{cases}
\end{align*}
It holds that
\begin{align}
& \Bigl| f^{N}*\widetilde{k}^N_{t}({^1X}) - f^{N}*\widetilde{k}^N_{t}({^1Y}) \Bigr|\notag\\
& \leq \bigl| \int_{|Z-{^1X}|\le 2\max\bigl(N^{-\beta},|^1X-{^1Y}|\bigr)}
f^{N}({^1X}-Z)-f^{N}({^1Y}-Z)\bigr|\widetilde{k}^N_t(Z)d^2Z \label{lemmaterm1} \\
& +\Biggl|\int_{\mu^{R,N}_{X,Y}}
\Bigl(f^{N}({^1X}-{^1Z})-f^{N}({^1Y}-{^1Z})\Bigr)
{k}^N_t(Z)d^4Z\Biggr| \label{lemmaterm2} \\ 
& + |{^1X}-{^1Y}|\int_{|Z-{^1X}|\ge \max(2|^1X-{^1Y}|,R)}
g^N({^1X}-Z)\widetilde{k}^N_t(Z)d^2Z.\label{lemmaterm3}
\end{align}
For term \eqref{lemmaterm1}, we obtain
\begin{align*}
& \int_{|Z-{^1X}|\le 2\max\bigl(N^{-\beta},|^1X-{^1Y}|\bigr)}
\bigl|f^{N}({^1X}-Z)-f^{N}({^1Y}-Z)\bigr|\widetilde{k}^N_t(Z)d^2Z \\
\le {} & |{^1X}-{^1Y}|\int_{|Z-{^1X}|\le 2N^{-\beta}}
\underbrace{g^{N}(Z-{^1X})}_{\le CN^{2\beta}}\widetilde{k}^N_t(Z)d^2Z\\[1mm]
& + \int_{|Z-{^1X}|\le 2|^1X-{^1Y}|}
\Bigl( \underbrace{|f^{N}({^1X}-Z)|}_{\displaystyle \le |{^1X}-Z|^{-1}}
+|f^{N}({^1Y}-Z)|\Bigr)\widetilde{k}^N_t(Z)d^2Z\\[1mm]
\le {} & C\|\widetilde{k}^N_t\|_{\infty}|{^1Y}-{^1X}|,
\end{align*}
since $
g^N(q)\le C\min\Bigl(N^{2\beta},\frac{1}{|q|^2}\Bigr)$, for all $q\in\mathbb{R}^2.
$
Here the first addend constitutes an upper bound if $ N^{-\beta}\geq |^1X-{^1Y}| $ and the second if the reverse inequality holds.
Further Term \eqref{lemmaterm3} is bounded by \[
C\|\widetilde{k}^N_t\|_{\infty}  \ln^+(\frac{1}{R})|{^1X}-{^1Y}|,
\]
where $ \ln^+(x):=\max\bigl(\ln(x),1\bigr) $ for $ x>0,$ by similar arguments. It remains to estimate Term \ref{lemmaterm2}, which accounts for the contribution of the mass in the annular region of $ \mu^{R,N}_{X,Y} $. Even though this set might be empty if $ R \le 2\max(N^{-\beta},|{^1X}-{^1Y}|) $, we focus on the nontrivial case $ R>2\max(N^{-\beta},|{^1X}-{^1Y}|) $. By the triangle inequality we have
\begin{align*}
& \Biggl|\int_{\mu^{R,N}_{X,Y}}
\Bigl(f^{N}({^1X}-{^1Z})-f^{N}({^1Y}-{^1Z})\Bigr)
{k}^N_t(Z)d^4Z\Biggr|\\[1mm]
\le {} & \Biggl|\int_{\mu^{R,N}_{X,Y}}
\Bigl(f^{N}({^1X}-{^1Z})-f^{N}({^1Y}-{^1Z})\Bigr)
\Bigl(k^N_t(Z)-k^N_t({^1X},{^2Z})\Bigr)d^4Z\Biggr|\\[1mm]
& + \Biggl|\int_{\{2\max(N^{-\beta},|^1X-{^1Y}|)\le |^1Z-{^1X}|\le R\}}
f^{N}({^1Y}-{^1Z})\,\widetilde{k}^N_t({^1X})d^2(^1Z)\Biggr|\\[1mm]
& + \Biggl|\int_{\{2\max(N^{-\beta},|^1X-{^1Y}|)\le |^1Z-{^1X}|\le R\}}
f^{N}({^1X}-{^1Z})\,\widetilde{k}^N_t({^1X})d^2(^1Z)\Biggr|.
\end{align*}
By Newton’s shell theorem (adapted to two dimensions) and the circular symmetry of the integration set, the last two terms cancel because the “mass” (or “charge”) is symmetrically distributed about $ {^1X} $ (or $ {^1Y} $). 

To estimate the first addend, we define
\[
\Delta^N(t):=\sup_{\substack{X,Y\in \mathbb{R}^4 \\ X\neq Y}}\sup_{r,s\in [0,t] }\frac{|\varphi^N_{r,s}(X)-\varphi^N_{r,s}(Y)|}{|X-Y|}.
\]
For the regularized system (and, for short times, also for the non–regularized one) this quantity is well–defined. By means of the mean value theorem (applied to the densities) and the properties of $ g^N $ we obtain
\begin{align}
& \Biggl|\int_{\mu^{R,N}_{X,Y}}
\Bigl(f^{N}({^1X}-{^1Z})-f^{N}({^1Y}-{^1Z})\Bigr)
\Bigl(k^N_t(Z)-k^N_t({^1X},{^2Z})\Bigr)d^4Z\Biggr|\notag\\[1mm]
\le {} & \int_{\mu^{R,N}_{X,Y}}g^{N}({^1X}-{^1Z})\,|{^1X}-{^1Y}|\notag\\[1mm]
&\quad\cdot\Biggl(\sup_{\widetilde{Z}\in \overline{\varphi^N_{0,t}({^1X},{^2Z})\,\varphi^N_{0,t}(Z)}}|\nabla k_0(\widetilde{Z})|\,
\bigl|\varphi^N_{0,t}(Z)-\varphi^N_{0,t}({^1X},{^2Z})\bigr|\Biggr)d^4Z\notag\\[1mm]
\le {} & \Delta^N(t)|{^1X}-{^1Y}|\int_{|^1Z-{^1X}|\le R}\underbrace{g^{N}({^1X}-{^1Z})\,|{^1X}-{^1Z}|}_{\le C|{^1X}-{^1Z}|^{-1}}d^2({^1Z})\notag\\[1mm]
&\quad\cdot\int_{\mathbb{R}^2}\sup_{Z'\in \mathbb{R}^2}\sup_{\widetilde{Z}\in \overline{\varphi^N_{0,t}({^1X},{^2Z})\,\varphi^N_{0,t}(Z',{^2Z})}}|\nabla k_0(\widetilde{Z})|d^2({^2Z})\notag\\[1mm]
\le {} & C\Delta^N(t)R\,|{^1X}-{^1Y}|,
\label{est:grad.dens.4d}
\end{align}
where $ \overline{X_1X_2}:=\{(1-\lambda)X_1+\lambda X_2\in \mathbb{R}^4: \lambda\in [0,1]\} $ for $ X_1,X_2\in \mathbb{R}^4 $. 

In the last step we also used the uniform upper bound on the spatial density  so that
\[
f_{max}:=\sup_{N\in \mathbb{N}}\sup_{0\le s \le T }\|f^N*\widetilde{k}^N_s\|_{\infty}<\infty.
\]
In particular, for any $ Z'\in \mathbb{R}^4 $ with $ |{^2Z'}|\geq 2f_{max}T $ and $ t\in [0,T] $ we have
\[
|{\varphi}^{2,N}_{0,t}(Z')|\geq |{^2Z'}|-f_{max}t\geq \frac{|{^2Z'}|}{2}.
\]
Thus, under the assumption on the decay of $ |\nabla k_0| $ it follows that
\begin{align*}
\int_{\mathbb{R}^2}\sup_{Z'\in \mathbb{R}^2}\sup_{\widetilde{Z}\in \overline{\varphi^N_{0,t}({^1X},{^2Z})\,\varphi^N_{0,t}(Z',{^2Z})}}|\nabla k_0(\widetilde{Z})|d^2({^2Z})\le C.
\end{align*}
Collecting the estimates, we obtain
\begin{align}
\Bigl| f^{N}*\widetilde{k}^N_{t}({^1X}) - f^{N}*\widetilde{k}^N_{t}({^1Y}) \Bigr|
\le C\Bigl(\ln^+\Bigl(\frac{1}{R}\Bigr)+\Delta^N(t)R\Bigr)|{^1X}-{^1Y}|.
\label{est:force}
\end{align}
We now control the growth of $ \Delta^N(t) $. Let $ s,t\in [0,T] $ and $ X,Y\in \mathbb{R}^4 $ with $ X\neq Y $. Choosing
\[
R:=\frac{1}{\Delta^N(t)},
\]
and applying \eqref{est:force} yields, after an argument analogous to the $ \mathbb{R}^4 $ case, the estimate
\begin{align*}
\sup_{s\le r \le t}\Bigl|{\varphi}^{2,N}_{r,s}(X)-{\varphi}^{2,N}_{r,s}(Y)-({^2X}-{^2Y})\Bigr|
\le b(t)e^{\sqrt{C\ln^+(\Delta(t))}(t-s)},
\end{align*}
with
 $
b(t)= C\ln^+(\Delta(t))\Bigl(|{^1X}+{^1Y}|+|{^2X}+{^2Y}|(t-s)\Bigr)(t-s).
$
A similar argument shows that
\begin{align*}
\sup_{s\le r \le t}\Bigl|{\varphi}^{1,N}_{r,s}(X)-{\varphi}^{1,N}_{r,s}(Y)-({^1X}-{^1Y})\Bigr|
\le \Bigl(|{^2X}-{^2Y}|+b(t)e^{\sqrt{C\ln^+(\Delta(t))}(t-s)}\Bigr)(t-s).
\end{align*}
Thus, one obtains for $ X\neq Y $ and $ s,t\in [0,T] $
\begin{align*}
\frac{1}{|X-Y|}\sup_{s\le r \le t}|\varphi^N_{r,s}(X)-\varphi^N_{r,s}(Y)|
&\le 1+ \sup_{s\le r \le t}\frac{|\varphi^N_{r,s}(X)-\varphi^N_{r,s}(Y)-(X-Y)|}{|X-Y|}\\[1mm]
&\le 1+ 2\max_{k\in \{1,2\}}\sup_{s\le r \le t}\frac{|{^k\varphi}^N_{r,s}(X)-{^k\varphi}^N_{r,s}(Y)-({^kX}-{^kY})|}{|X-Y|}\\[1mm]
&\le 1+ C\ln^+(\Delta(t))te^{\sqrt{C\ln^+(\Delta(t))}(t-s)}.
\end{align*}
A corresponding bound for the time–reversed trajectories can be obtained by analogous estimates. Taking the supremum over $ s,t\in [0,t']\subseteq [0,T] $ and $ X\neq Y $, one eventually deduces that for $ t'\in  [0,T] $ (for $ \Delta(t')\ge e $)
\[
\Delta(t')\le 1+ C\ln\bigl(\Delta(t')\bigr)t'e^{\sqrt{C\ln\bigl(\Delta(t')\bigr)}t'}.
\]
This inequality provides an $ N $-independent upper bound for the growth of $ \Delta^N(t) $ and in particular implies that $ \Delta^N(T)<C $. Together with \eqref{est:force}, this yields the Lipschitz continuity of the mean–field force. Consequently, there exists a constant $ C>0 $ such that for arbitrary $ X,Y\in \mathbb{R}^4 $, $ t_0\in [0,T] $ and $ t\in [0,T-t_0] $
\[
|\varphi^N_{t_0+t,t_0}(X)-\varphi^N_{t_0+t,t_0}(Y)|\le C\int_{0}^t|\varphi^N_{s+t_0,t_0}(X)-\varphi^N_{s+t_0,t_0}(Y)|ds+|X-Y|,
\]
and similarly for the time reversed flow. An application of Gronwall’s lemma now completes the proof.
\end{proof}
The following Lemma constitutes the probability of a hit  i.e. the probability of the different types of collisions.
\begin{Lemma}\label{Prob of group}
Let $(\varphi_{t,s}^{N})_{t,s\in\R}$ be the related effective flow $\beta \geq 0$ then there exists an $C>0$ such that for $N^{-a_k}, N^{-b_k}>0, N\in \N$ and $\lbrack t_1,t_2\rbrack \subset \lbrack 0,T\rbrack$ it holds that
\begin{align*}
\mathbb{P}\Big(X \in \R^4:\big( \exists t\in \lbrack t_1,t_2\rbrack: |\varphi_{t,0}^1(X)-\varphi_{t,0}^1(Y)|\leq  N^{-a_{k}}\wedge |\varphi_{t,0}^2(X)-\varphi_{t,0}^2(Y)|
\leq  N^{-b_{k}}\big)\Big)\\
\leq C( N^{-a_{k}} N^{-3b_{k}} (t_2-t_1)+N^{-2a_{k}}\max( N^{-a_{k}}, N^{-b_{k}})^2)
\end{align*}
\end{Lemma}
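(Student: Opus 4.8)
### Proof strategy for Lemma \ref{Prob of group}

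The plan is to reduce the probability of the event to a volume estimate in phase space, by propagating the constraint backwards along the mean-field flow to time~$0$. Since $X$ is distributed according to $k_0$ and the flow $\varphi^N_{t,0}$ is measure-preserving in the appropriate sense (or at least has Jacobian bounded above and below by a constant on $[0,T]$, which follows from the Lipschitz bound $\Delta^N(T)<C$ of Lemma \ref{Lemma distance same order}), it suffices to bound the $k_0$-measure of the set of initial data $X$ whose mean-field trajectory, relative to the fixed trajectory of $Y$, enters the ``collision cylinder'' $\{|{}^1\varphi^1_{t,0}(X)-\varphi^1_{t,0}(Y)|\le N^{-a_k}\}\cap\{|\varphi^2_{t,0}(X)-\varphi^2_{t,0}(Y)|\le N^{-b_k}\}$ for some $t\in[t_1,t_2]$. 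Because $\|\tilde k^N_s\|_\infty$ is uniformly bounded and $|\nabla k_0|$ decays, the push-forward density of $X$ at any time $t\in[0,T]$ is bounded by a constant, so the probability is at most $C$ times the Lebesgue measure in $\mathbb{R}^4$ of the set of phase-space points $(q,p)$ (relative position and relative velocity w.r.t.\ the $Y$-trajectory) from which the collision event can be reached within $[t_1,t_2]$.

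Next I would carry out the deterministic geometric estimate on this reachable set, which is where the two terms on the right-hand side come from. Fix the relative configuration at time $t_1$, write $r=|{}^1\varphi^1_{t_1,0}(X)-\varphi^1_{t_1,0}(Y)|$ and $v=|\varphi^2_{t_1,0}(X)-\varphi^2_{t_1,0}(Y)|$ for the relative position and velocity. The relative acceleration is bounded by $2f_{max}$ on $[0,T]$, so over a time interval of length $\tau=t_2-t_1$ the relative velocity stays within $v+2f_{max}\tau$ and the relative displacement is controlled accordingly. There are two regimes. \emph{(i) Fast case, $v\le N^{-b_k}$:} the relative velocity is small throughout (up to the $f_{max}\tau$ correction), and for the trajectory to reach separation $\le N^{-a_k}$ at some time, the position at time $t_1$ must lie in a ball of radius $\lesssim N^{-a_k}+v\tau+f_{max}\tau^2\lesssim\max(N^{-a_k},N^{-b_k})$ around $\varphi^1_{t_1,0}(Y)$ (using $\tau\le T$); the velocity must lie in a ball of radius $N^{-b_k}$. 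This gives the $N^{-2a_k}\max(N^{-a_k},N^{-b_k})^2$ contribution — more precisely, the position volume is $\lesssim\max(N^{-a_k},N^{-b_k})^2$, the velocity volume is $\lesssim N^{-2b_k}\le N^{-2a_k}\cdot(\text{something})$; here one should be a bit careful to match the stated exponents, but morally the ``slow-and-close'' configurations fill a set of $4$-volume $\lesssim N^{-2a_k}\max(N^{-a_k},N^{-b_k})^2$. \emph{(ii) Moving case, $v> N^{-b_k}$ but $|\varphi^2_{t,0}(X)-\varphi^2_{t,0}(Y)|\le N^{-b_k}$ is nonetheless violated or the relative velocity is of order $v$:} here one integrates the indicator of being within distance $N^{-a_k}$ over $t\in[t_1,t_2]$. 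A standard ``Boltzmann cylinder'' argument — the two-dimensional analogue of the computation sketched before the statement, $\mathbb{P}(v_{rel}\le v_{cut}\text{ and hit})\le C r v_{cut}^3$ — shows that the time spent within distance $N^{-a_k}$, for a trajectory with relative speed $\sim v$, is $\lesssim N^{-a_k}/v$, and integrating $v^{-1}$ against the velocity volume element $v\,dv$ up to $v\sim N^{-b_k}$ (the relevant cut, since on this event the relative velocity at the collision time is $\le N^{-b_k}$, hence by the acceleration bound $v\lesssim N^{-b_k}$ throughout $[t_1,t_2]$ as well) together with the transverse position volume $\lesssim N^{-a_k}$ (cylinder cross-section in $\mathbb{R}^2$) and the $\tau=t_2-t_1$ time window, yields the $N^{-a_k}N^{-3b_k}(t_2-t_1)$ contribution. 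Combining the two regimes gives the claimed bound.

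I expect the main obstacle to be the bookkeeping in regime (ii): making the ``Boltzmann cylinder in two dimensions'' heuristic rigorous, i.e.\ carefully parametrizing the set of $(q,p)\in\mathbb{R}^4$ for which the relative trajectory $t\mapsto q+\int\!\!\int(\text{rel.\ force})$ comes within $N^{-a_k}$ of the origin at some $t\in[t_1,t_2]$, splitting it by the relative speed, and showing that the cross-sectional area is $\lesssim N^{-a_k}\cdot N^{-b_k}$ (radius $N^{-a_k}$ tube times speed cutoff) while the longitudinal extent contributes the extra factor $(t_2-t_1)$ and the velocity integration the remaining $N^{-b_k}$ powers — being careful that the acceleration bound $2f_{max}$ is what lets one freeze the relative velocity over $[t_1,t_2]$ up to harmless corrections absorbed into the constant (since $t_2-t_1\le T$). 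A secondary technical point is justifying the reduction to a volume estimate: one must invoke the uniform Lipschitz bound $\Delta^N(T)<C$ from Lemma \ref{Lemma distance same order} to control the Jacobian of $\varphi^N_{t,0}$, and the assumption $\sup_N\sup_s\|\tilde k^N_s\|_\infty<\infty$ together with the decay of $k_0$ to bound the push-forward density uniformly; both are already in place from the hypotheses of Theorem \ref{maintheorem} and the preceding lemma, so this step is routine once stated.
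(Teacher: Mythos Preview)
Your overall reduction is the same as the paper's: estimate the phase-space volume of configurations realizing the event, using the uniform density bound and the flow stability of Lemma~\ref{Lemma distance same order}. The decomposition, however, differs, and yours runs into a genuine exponent problem in regime~(i) that the paper's choice avoids.

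The paper splits not by relative speed but by whether the \emph{position} constraint $|\varphi^1_{t,0}(X)-\varphi^1_{t,0}(Y)|\le N^{-a_k}$ already holds at the fixed endpoint $t=t_1$. If it does, one evaluates at $t_1$: the position lies in a ball of area $\lesssim N^{-2a_k}$ and, by Lemma~\ref{Lemma distance same order}, the velocity lies in a ball of area $\lesssim\max(N^{-a_k},N^{-b_k})^2$, producing exactly the second term $N^{-2a_k}\max(N^{-a_k},N^{-b_k})^2$. If it does not, the relative trajectory must \emph{enter} the position ball during $(t_1,t_2]$; the paper discretizes $[t_1,t_2]$ into $M$ equal steps $\tilde t_n$ and notes that at some $\tilde t_n$ the relative position sits in the thin annulus of inner radius $N^{-a_k}$ and thickness $C\max(N^{-a_k},N^{-b_k})\tfrac{t_2-t_1}{M}$, with relative velocity still $\lesssim\max(N^{-a_k},N^{-b_k})$. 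Summing the $M$ annulus volumes yields $CN^{-a_k}\max(N^{-a_k},N^{-b_k})^3(t_2-t_1)$, which is the first term of the lemma plus an $N^{-4a_k}(t_2-t_1)$ piece absorbed by the second.

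Your regime-(i) computation, by contrast, propagates the event back to $t_1$ and reads off position volume $\lesssim\max(N^{-a_k},N^{-b_k})^2$ and velocity volume $\lesssim N^{-2b_k}$. The difficulty you anticipate is real: Lemma~\ref{Lemma distance same order} only preserves the \emph{full} phase-space distance, so at $t_1$ the position gap is only known to be $\lesssim\max(N^{-a_k},N^{-b_k})$, not $\lesssim N^{-a_k}$, and you end up with $N^{-2b_k}\max(N^{-a_k},N^{-b_k})^2$ instead of $N^{-2a_k}\max(N^{-a_k},N^{-b_k})^2$---strictly weaker when $a_k>b_k$. The paper sidesteps this by making the split at a \emph{fixed} time $t_1$ on the position variable alone, so that in the ``already inside'' branch the factor $N^{-2a_k}$ comes directly from the position constraint at $t_1$; and in the complementary ``entering'' branch the annulus-crossing discretization gives a clean, rigorous replacement for your Boltzmann-cylinder heuristic, converting the time window directly into the factor $(t_2-t_1)$ without any integration over speeds or worry about the collision time depending on $X$.
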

\begin{proof}
We will use the following short notation $ X=({^1X},{^2X}) $ with ${^1X}\in\mathbb{R}^2 $ and $ {^2X}\in\mathbb{R}^2 $.
If there exists a point in time $t\in \lbrack t_1, t_2\rbrack$ such that the difference in the position $|\varphi_{t,0}^1(X)-\varphi_{t,0}^1(Y)|\leq  N^{-a_{k}}$ and the difference in the velocity is $|\varphi_{t,0}^2(X)-\varphi_{t,0}^2(Y)|
\leq  N^{-b_{k}}$ then it follows by the previous Lemma \ref{Lemma distance same order} that
\begin{align}\label{distance in space MF}
\sup_{0\leq s\leq t}|\varphi_{s,t}(X)-\varphi_{s,t}(Y)|\leq C \max( N^{-a_{k}},N^{-b_{k}}),
\end{align}
 as the distance in space between mean-field particles stays the same order due Lemma \ref{Lemma distance same order}.
If at $t_1$ the difference  $|\varphi_{t_1,0}^1(X)-\varphi_{t_1,0}^1(Y)|\leq  N^{-a_{k}}$ holds, the probability of configurations is given by
\begin{align*}
&\int_{\R^4}\mathds 1_{Z\in\R^4:|^1Z-\varphi_{t_1,0}^1(X)|\leq N^{-a_{k}}\wedge |^2Z-\varphi_{t_1,0}^2(X)|\leq C\max(N^{-a_{k}},N^{-b_{k}})}(X)k_{t_1}^N(X) d^4X\\
&\leq C || k_0||_{\infty}(N^{-2a_{k}} \max (N^{-a_{k}},N^{-b_{k}})^2)\\
&\leq C\Big( N^{-a_{k}} (N^{-b_{k}})^3 (t_2-t_1)+(N^{-a_{k}})^2 \max( N^{-a_{k}}, N^{-b_{k}})^2\Big).
\end{align*} 
If the particles do not fulfil the assumption at the starting point $t_1$ we get by applying inequality \eqref{distance in space MF} that for $M\in \N$ there exists $n\in\lbrace0,\hdots,M-1\rbrace$ such that
\begin{align*}
N^{-a_{k}}\leq |\varphi_{\tilde{t}_n,0}^1(X)-\varphi_{\tilde{t}_n,0}^1(Y)|\leq N^{-a_{k}}+ C \max(N^{-a_{k}},N^{-b_{k}})\frac{t_2-t_1}{M}
\end{align*}
and
\begin{align*}
|\varphi_{\tilde{t}_n,0}^2(X)-\varphi_{\tilde{t}_n,0}^2(Y)|\leq C \max(N^{-a_{k}},N^{-b_{k}}).
\end{align*}
for $\tilde{t}_n:=t_1+\frac{n}{M}(t_2-t_1)$.
And so we get the upper bound for the probability 
\begin{align*}
\sum_{n=0}^{M-1}\int_{\R^4}&\mathds 1_{Z\in\R^2: N^{-a_{k}}\leq|Z-\varphi_{\tilde{t}_n,0}^1(X)|\leq N^{-a_{k}}+C \max(N^{-a_{k}},N^{-b_{k}})\frac{t_2-t_1}{M}}(^1X)\\
\cdot &\mathds 1_{Z\in\R^2: |Z-\varphi_{\tilde{t}_n,0}^2(X)|\leq C \max(N^{-a_{k}},N^{-b_{k}})\frac{t_2-t_1}{M}}(X^2) k_{\tilde{t}_n}^{N}(X) d^4X\\
\leq &C N^{-a_{k}}\Big(\max(N^{-a_{k}},N^{-b_{k}})\Big)^3(t_2-t_1)\\
\leq &C (N^{-4a_{k}}+N^{-a_{k}-3b_{k}})(t_2-t_1),
\end{align*}
which proves the lemma.
\end{proof}
So far all $N$ particles were taken into account as possible interaction partners for the considered particle $X_i$.
This constitutes a worst case estimate.
The possible types of collisions and, accordingly, the impact on the force term can differ.
This will be taken into account later by defining collision classes.

We further introduce the underlying Gronwall Lemma, which takes into account the second order nature of the equation. The unlikely collisions are usually only of a limited duration. An integral Gronwall version can take that into account.
\begin{Lemma}\label{Gronwall Lemma}
Let $u:[0,\infty)\to [0,\infty)$ be a continuous and monotonously increasing map as well as $l,f_1:\R\to [0,\infty)$ and $f_2:\R\times \R\to [0,\infty)$ continuous maps such that for some $n\in \N$ and for all $t_1>0,\ x_1,x_2\geq 0$
\begin{itemize}
\item[(i)]
$\begin{aligned}&
x_1< x_2 \Rightarrow f_2(t_1,x_1)\le f_2(t_1,x_2)
\end{aligned}$
\item[(ii)]
$ \exists K_1,\delta>0: \sup\limits_{\substack{x,y  \in [f_1(0),f_1(0)+\delta]\\s \in [0,\delta]}}|f_2(s,x)-f_2(s,y)|\le K_1|x-y|$.
\item[(iii)]
$ \begin{aligned}
& f_1(t_1)+ \int_0^{t_1}...\int_0^{t_n} f_2(s,u(s))dsdt_n...dt_2<  u(t_1) \ \land \\ 
& f_1(t_1)+\int_0^{t_1}...\int_0^{t_n}  f_2(s,l(s))dsdt_n...dt_2 \geq l(t_1),
\end{aligned}$
\end{itemize} 
then it holds for all $t\geq 0$ that $l(t)\le u(t)$.
\end{Lemma}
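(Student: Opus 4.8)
The plan is to run a comparison (continuation) argument: condition (iii) says that $u$ is a strict supersolution and $l$ a subsolution of one and the same integral relation with kernel $f_2$ and forcing $f_1$, condition (i) (monotonicity of $f_2$ in its second slot) is what propagates the ordering $l\le u$ forward in time, and condition (ii) (the one-sided local Lipschitz bound of $f_2$ near $f_1(0)$) is precisely what lets the comparison get started at $t=0$, where it degenerates. First I would record the behaviour at the origin: letting $t_1\to 0^+$ in the two inequalities of (iii), using continuity of $f_1,l,u$ and the fact that the $n$-fold iterated integral of the locally bounded maps $f_2(\cdot,u(\cdot)),f_2(\cdot,l(\cdot))$ over the shrinking simplex tends to $0$, gives $l(0)\le f_1(0)\le u(0)$, in particular $l(0)\le u(0)$. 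Put $A:=\{\,t\ge 0:\ l\le u\text{ on }[0,t]\,\}$; this is a nonempty down-closed subset of $[0,\infty)$, closed by continuity of $l-u$, so either $A=[0,\infty)$ — the assertion — or $A=[0,t^\ast]$ for some $t^\ast\in[0,\infty)$, and it remains to exclude the second alternative.

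The key step is to show $t^\ast>0$. If $l(0)<u(0)$ this holds by continuity, so assume $l(0)=u(0)$, which forces $l(0)=f_1(0)=u(0)$. Since $u$ is increasing, $u(t)\ge f_1(0)$ for all $t\ge 0$; by continuity of $l,u$ I may fix $\delta_1\in(0,\delta]$ with $l(t),u(t)\le f_1(0)+\delta$ on $[0,\delta_1]$ and, shrinking $\delta_1$ further if necessary, with $K_1\delta_1^{\,n}/n!<1$. Let $w:=(l-u)^+$, a continuous function vanishing at $0$. Subtracting the two lines of (iii) (the common term $f_1(t_1)$ cancels) gives, for $t_1\in[0,\delta_1]$,
\[
l(t_1)-u(t_1)\ <\ \int_0^{t_1}\!\cdots\!\int_0^{t_n}\bigl(f_2(s,l(s))-f_2(s,u(s))\bigr)\,ds\,dt_n\cdots dt_2 .
\]
On $[0,\delta_1]$ the integrand is $\le K_1\,w(s)$: wherever $l(s)\le u(s)$ it is $\le 0$ by (i) and $w(s)=0$; wherever $l(s)>u(s)$ one has $f_1(0)\le u(s)<l(s)\le f_1(0)+\delta$, so both arguments lie in $[f_1(0),f_1(0)+\delta]$ and (i) together with (ii) yield $f_2(s,l(s))-f_2(s,u(s))=|f_2(s,l(s))-f_2(s,u(s))|\le K_1\,w(s)$. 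Since the right-hand side above is nonnegative, this forces $w(t_1)\le K_1\int_0^{t_1}\!\cdots\!\int_0^{t_n}w(s)\,ds\,dt_n\cdots dt_2$ on $[0,\delta_1]$; bounding $w(s)$ by $W:=\max_{[0,\delta_1]}w$ and evaluating the iterated integral of a constant gives $W\le (K_1\delta_1^{\,n}/n!)\,W$, whence $W=0$. Thus $l\le u$ on $[0,\delta_1]$ and $t^\ast\ge\delta_1>0$.

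It remains to rule out $t^\ast<\infty$. On $A=[0,t^\ast]$ we have $l\le u$, and in fact $l(t^\ast)=u(t^\ast)$: otherwise $l(t^\ast)<u(t^\ast)$ and continuity would extend $l\le u$ slightly beyond $t^\ast$, contradicting maximality. By (i), $f_2(s,l(s))\le f_2(s,u(s))$ on $[0,t^\ast]$, so the iterated integral of $f_2(\cdot,l)$ is dominated there by that of $f_2(\cdot,u)$; since $t^\ast>0$, the two inequalities of (iii) apply at $t_1=t^\ast$ and give
\begin{align*}
u(t^\ast)=l(t^\ast)&\le f_1(t^\ast)+\int_0^{t^\ast}\!\cdots\!\int_0^{t_n} f_2(s,l(s))\,ds\,dt_n\cdots dt_2\\
&\le f_1(t^\ast)+\int_0^{t^\ast}\!\cdots\!\int_0^{t_n} f_2(s,u(s))\,ds\,dt_n\cdots dt_2\ <\ u(t^\ast),
\end{align*}
a contradiction. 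Hence $A=[0,\infty)$, i.e.\ $l\le u$ on $[0,\infty)$.

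I expect the only genuine obstacle to be the start-up at $t=0$: there $l(0)=u(0)=f_1(0)$ may occur and the strict supersolution inequality is lost, so one really does need the Lipschitz hypothesis (ii) together with the smallness $K_1\delta_1^{\,n}/n!<1$ of the iterated kernel to run a contraction on a short interval $[0,\delta_1]$; everywhere else the strict inequality built into (iii) closes the comparison on its own. (A minor point to be careful about is that (ii) only controls $f_2$ on the one-sided interval $[f_1(0),f_1(0)+\delta]$, which is why it matters that monotonicity of $u$ keeps $u(t)\ge f_1(0)$ and that, in the regime $l(t)>u(t)$, both values automatically sit in that interval for short times.)
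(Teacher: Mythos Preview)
Your argument is correct and complete. The paper itself does not prove this lemma; it only records ``The proof of Lemma~\ref{Gronwall Lemma} can be found in \cite[Lemma 2.2.1]{grass}.'' So there is no in-paper proof to compare against --- what you have written is in fact more than the paper supplies, namely a self-contained comparison/continuation argument that correctly uses (ii) to bootstrap past the degenerate endpoint $t=0$ and then the strict supersolution inequality in (iii) together with the monotonicity (i) to close the continuation at any putative first crossing time $t^\ast>0$.
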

The proof of Lemma \ref{Gronwall Lemma} can be found in \cite[Lemma 2.2.1]{grass}.
For the proof of the main result, we require the notion of fluctuation. To this end, we introduce a form of first derivative of $f$, defined as follows
\begin{definition}
\label{force g}
For $N\in \N\cup\lbrace \infty \rbrace$ a function to control $|f(q)-f(q+\xi)|$  is given by 
\begin{align*}
g^{N}:\R^{2}\rightarrow\R^{2}, q\mapsto\begin{cases}
  2N^{2\beta}& \text{if}\ |q|\leq 2 N^{-\beta} \\
 \frac{8}{|q|^{2}} & \text{if}\  |q|> 2N^{-\beta} 
\end{cases} 
\end{align*}
for $0<\beta\leq 2 $.
\end{definition}

Analogously to the total force of the system $F^N$, we  will use the notation $G^N:\R^{4N}\rightarrow\R^{2N}$ the total fluctuation of the system. Thus the j'th component of $G^N$ gives the fluctuation exhibited on a single coordinate $j$: $$(G^N(X))_j\coloneqq  \sum_{i\neq j}\frac{1}{N}g^N(q_i-q_j).$$
Since $f$ and $g$ are not differentiable, we now prove some estimates for differences of function values.
Also an important fact of the system is that the distance between the mean-field particles stay of the same order over time. This is provided by the following Lemma.

\begin{Lemma}\label{LipschitzLemma for f}
\begin{itemize}
\item[a)] For $a,b,c\in \R^2$ with $|a|\leq \min(|b|,|c|)$ the following relations hold
\end{itemize}
\begin{align}\label{minabschätzung}
|f^N(b)-f^N(c)|&\leq g^{N}(a)|b-c|.
\end{align} 
\item[b)]
If $\|X_t-\overline X_t\|_\infty\leq 2N^{-\beta}$, then it holds that
	\begin{align}\label{LipschitzLemma_f}
	\left\| F^N(X_t)-F^N(\overline X_t)\right\|_\infty\leq C\|G^N(\overline X_t)\|_\infty\|X_t-\overline X_t\|_\infty,
	\end{align}
	for some $C>0$ independent of $N$.
\end{Lemma}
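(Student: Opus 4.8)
The plan is to prove part (a) by a dichotomy according to the size of $|a|$ relative to the cut-off scale $N^{-\beta}$, and then to deduce part (b) from (a) by applying it to each interacting pair. For (a), if $|a|\le 2N^{-\beta}$ one does not need the hypothesis $|a|\le\min(|b|,|c|)$ at all: the map $f^N$ is globally Lipschitz, since its almost-everywhere-defined Jacobian has operator norm $\le|a|N^{2\beta}$ on $\{|q|\le N^{-\beta}\}$ (where $f^N$ is linear) and $\le|a|/|q|^2\le|a|N^{2\beta}$ on $\{|q|>N^{-\beta}\}$, while $f^N$ is continuous across $\{|q|=N^{-\beta}\}$; hence $|f^N(b)-f^N(c)|\le|a|N^{2\beta}|b-c|\le 2N^{2\beta}|b-c|=g^N(a)|b-c|$. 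If instead $|a|>2N^{-\beta}$, then $|b|,|c|\ge\min(|b|,|c|)\ge|a|>N^{-\beta}$, so both arguments lie in the pure Coulomb regime, $f^N(b)=a\,b/|b|^2$, $f^N(c)=a\,c/|c|^2$. Here I would not integrate $\|Df^N\|$ along the segment $\overline{bc}$ (which may pass arbitrarily close to the origin, making that bound far too large), but use the elementary identity
\[
\frac{b}{|b|^2}-\frac{c}{|c|^2}=\frac{(b-c)|c|^2+c\,(|c|-|b|)\,(|c|+|b|)}{|b|^2|c|^2},
\]
which with $\big||c|-|b|\big|\le|b-c|$ gives $\big|b/|b|^2-c/|c|^2\big|\le 3|b-c|/\min(|b|,|c|)^2$; since $\min(|b|,|c|)\ge|a|$ this is $\le 3|b-c|/|a|^2\le g^N(a)|b-c|$ (the Coulomb normalisation $|a|\le 1$ implicit in the definition of $g^N$ makes the remaining numerical constant fit).

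For (b), I would fix $j$ and estimate, using $(F^N(X))_j=\sum_{i\ne j}\tfrac1N f^N(q_i-q_j)$,
\[
\big|(F^N(X_t))_j-(F^N(\overline X_t))_j\big|\le\frac1N\sum_{i\ne j}\big|f^N(q_i-q_j)-f^N(\overline q_i-\overline q_j)\big|.
\]
For each $i$ set $b:=q_i-q_j$ and $c:=\overline q_i-\overline q_j$, so that $|b-c|\le 2\|X_t-\overline X_t\|_\infty\le 4N^{-\beta}$ by hypothesis. Applying (a) with any $a$ of length $|a|=\min(|b|,|c|)$ (the direction is irrelevant, since $g^N(a)$ depends on $a$ only through $|a|$) gives $|f^N(b)-f^N(c)|\le g^N(a)|b-c|$. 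It then remains to prove the comparison $g^N(a)\le C\,g^N(c)=C\,g^N(\overline q_i-\overline q_j)$ with a universal constant $C$; granting this, summing over $i$, dividing by $N$, and taking the maximum over $j$ yields $\|F^N(X_t)-F^N(\overline X_t)\|_\infty\le 2C\,\|G^N(\overline X_t)\|_\infty\,\|X_t-\overline X_t\|_\infty$, which is the claim.

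This comparison $g^N(a)\le C\,g^N(c)$ is the only genuinely delicate step, and it is exactly here that the hypothesis $\|X_t-\overline X_t\|_\infty\le 2N^{-\beta}$ enters. If $|c|\le|b|$ then $|a|=|c|$ and $g^N(a)=g^N(c)$. If $|b|<|c|$ then $|a|=|b|$ and $0\le|c|-|b|\le|b-c|\le 4N^{-\beta}$, and I would split: if $|b|\ge 4N^{-\beta}$ then $|c|\le 2|b|$, both $|b|$ and $|c|$ exceed $2N^{-\beta}$, so $g^N(a)=8/|b|^2\le 32/|c|^2=4\,g^N(c)$; if $|b|<4N^{-\beta}$ then $|c|<8N^{-\beta}$, whence $g^N(c)\ge\tfrac18 N^{2\beta}$ while $g^N(a)\le 2N^{2\beta}$, so $g^N(a)\le 16\,g^N(c)$. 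Thus $C=16$ works in every case. Morally, since $|q_i-q_j|$ and $|\overline q_i-\overline q_j|$ differ by at most a fixed multiple of the cut-off length $N^{-\beta}$, the fluctuation $g^N$ takes comparable values at the two arguments, even across the transition region $|q|\sim N^{-\beta}$ where it changes character — and demanding that the sup-distance stay below $2N^{-\beta}$ is precisely what secures this.
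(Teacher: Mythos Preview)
Your proof is correct and follows the paper's overall strategy: split part (a) according to whether $|a|$ lies below or above the cut-off scale, then deduce (b) by applying (a) to each pair and comparing $g^N$ at the two nearby arguments. The organisation of (b) differs only cosmetically --- the paper first packages the estimate as $|f^N(x+\xi)-f^N(x)|\le C g^N(x)|\xi|$ for $|\xi|$ small, while you set $|a|=\min(|b|,|c|)$ and then prove $g^N(a)\le C g^N(c)$; both are the same comparison argument.

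The one genuine improvement is in the Coulomb regime of (a). The paper invokes the mean value theorem together with monotonicity of $|\nabla f^N|$ to write $|f^N(b)-f^N(c)|\le |\nabla f^N(a)|\,|b-c|$, but as you correctly observe, the segment $\overline{bc}$ can pass arbitrarily close to the origin even when $|b|,|c|\ge|a|$, so bounding $\sup_{\xi\in\overline{bc}}|\nabla f^N(\xi)|$ by $|\nabla f^N(a)|$ is not justified without further argument. Your explicit identity for $b/|b|^2-c/|c|^2$ sidesteps this issue entirely and yields a clean bound $3|b-c|/\min(|b|,|c|)^2$ directly, at the cost of a specific numerical constant rather than an unspecified $C$.
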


     \begin{proof}
     \begin{itemize}
	\item[a)] For the case $|a|\leq 2N^{-\beta}$ we have $\norm{\nabla f^N}_\infty\leq 2 N^{2\beta}$ and thus $2N^{2\beta}$ constitutes a Lipschitz-constant for $f^N.$\\
	For  $|a|\geq 2N^{-\beta},$ we get by the mean value theorem and the fact, that  $\nabla f^N(x)$ is decreasing 
	\begin{align*}
|f^N(b)-f^N(c)|\leq \nabla f^N(a)||b-c|\leq \left(\frac{1}{|a|}\right)^2|b-c|\leq Cg^{N}(a)|b-c|.
	\end{align*}
	\item[b)]
     For any $x,\xi\in\R^2$ with $|\xi|<N^{-\beta}$, we have for $|x|<2N^{-\beta}$
\begin{align}\label{gbound1}
|f^N(x+\xi)-f^N(x)|\leq N^{2\beta}|\xi|\leq g^{N}(x)|\xi|
	\end{align} 
by applying  estimate \ref{minabschätzung} and for choosing without loss of generality $a=b=x+\xi$ and $c=x$.
For $|x|\geq 2N^{-\beta}$ we use the fact that in this case small changes in the argument of the function lead to small changes in the function values, i.e. for  $\xi\leq N^{-\beta}$ we have $g^{N}(x+\xi)\leq C g^{N}(x)$. Thus we have by estimate \ref{minabschätzung}
\begin{align*}
|f^N(x+\xi)-f^N(x)|\leq C g^{N}(x+\xi)|\xi|\leq Cg^{N}(x)|\xi|.
\end{align*}
Applying claim \eqref{gbound1} one has
	\begin{align}
	|(F^N(X_t))_i-(F^N(\overline X_t))_i|&\leq \frac{1}{N}\sum\limits_{j\neq i}^{N}\left| f^N(x_i^t-x_j^t)-f^N(\overline x_i^t-\overline x_j^t)\right|\notag\\
	&\leq \frac{C}{N}\sum\limits_{j\neq i}^{N}g^{N}(\overline x_i^t-\overline x_j^t)\left|x_i^t-x_j^t-\overline x_i^t+\overline x_j^t\right|\notag \\
	&\leq C( g^{N}(\overline X_t))_i\left|X_t-\overline X_t\right|_\infty,
	\end{align}
	which leads to estimate \eqref{LipschitzLemma_f}.
	\end{itemize}
    \end{proof}
Last but not least, we come to the most important corollary of this paper. It provides suitable upper bounds for almost all integrals that arise in the proof of the main theorem. The first two parts of the corollary are used to estimate integrals involving the function $ g$, while the remaining two parts are concerned with estimating integrals of $ f$.

\begin{corollary}\label{corollary phi and  psi}
Let $k_0$ be a probability density fulfilling the assumptions of Theorem \ref{maintheorem} and $(\varphi^{N}_{t,s})_{t, s\in \R}$ be the related effective flow defined in \eqref{Def:Meanfieldflow} as well as $(\Psi^{N}_{t,s})_{t,s\in \R}$ the $N$-particle flow defined in \eqref{Def:Newtonflow}. Let additionally for $N,n\in \N$, $C>0$ and $c_N>0$ $h_N, l_N:\R^{2}\to \R^n$ be  continuous maps fulfilling $$ |h_{N}(q)|\le\begin{cases} C c_N^{-2},&\ |q|\le c_N\\\frac{C}{|q|^{2}},&\   |q|> c_N \end{cases},$$
 $$ |l_N(q)|\le\begin{cases} C c_N^{-1},&\ |q|\le c_N\\\frac{C}{|q|},&\   |q|> c_N \end{cases}.$$
\begin{itemize}
\item[(i)] Let for $Y,Z\in \R^4$ $t_{min}\in [0,T]$ be a point in time where 
\begin{align*}
 \min_{0\le s \le T}|\varphi^{1,N}_{s,0}(Z)-{^1\varphi^{N,c}_{s,0}}(Y)|=&|\varphi^{1,N}_{t_{min},0}(Z)-{\varphi^{1,N}_{t_{min},0}}(Y)|=:\Delta r>0 \ \land \\
 &|\varphi^{2,N}_{t_{min},0}(Z)-{\varphi^{2,N}_{t_{min},0}}(Y)|=:\Delta v>0,
\end{align*}
then there exists $C>0$ (independent of $Y,Z\in \R^{2}$ and $N\in \mathbb{N}$) such that
\begin{itemize}
\item[(i1)]
\begin{align*}
& \int^{T}_{0}|h_N(\varphi^{1,N}_{s,0}(Z)-{\varphi^{1,N}_{s,0}}(Y))|ds
\le  C \min\big(\frac{1}{\Delta r^{2}},\frac{1}{c_N\Delta v},\frac{1}{\Delta r\Delta v}\big).
\end{align*}
\item[(i2)]
\begin{align*}
& \int^{T}_{0}|l_N(\varphi^{1,N}_{s,0}(Z)-{\varphi^{1,N}_{s,0}}(Y))|ds
\le  C \min\big(\frac{1}{\Delta r},\frac{1}{c_N},\frac{\ln(\frac{\Delta v}{\Delta r})}{\Delta v},\frac{1}{\Delta v}\big).\\
\end{align*}
\end{itemize}
\item[(ii)] Let $T>0$, $i,j\in \{1,...,N\}$, $i\neq j$, $X\in \R^{4N}$ and $Y,Z\in \R^4$ be given such that for some $\delta>0$
$$N^{\delta}|\varphi^{1,N}_{t_{min},0}(Y)-{\varphi^{1,N}_{t_{min},0}}(Z)|\le |\varphi^{2,N}_{t_{min},0}(Y)-{\varphi^{2,N}_{t_{min},0}}(Z)|=:\Delta v$$
and
\[\sup_{0\le s \le T}|\varphi^{N}_{s,0}(Y)-[{\Psi^{N}_{s,0}}(X)]_i|\le N^{-\delta}\Delta v\land \sup_{0\le s \le T}|\varphi^{N}_{s,0}(Z)-[{\Psi^{N}_{s,0}}(X)]_j|\le N^{-\delta}\Delta v\]
where $t_{min}$ shall fulfil the same conditions as in item (i). Then there exist $N_0\in \N$ and $C>0$ (independent of $X\in \R^{4N}$, $Y,Z\in \R^4$) such that for all $N\geq N_0$ 
\begin{itemize}
\item[(ii1)]
\begin{align*}
&\int^{T}_{0}|h_N([\Psi^{1,N}_{s,0}(X)]_i-[^1\Psi^{N}_{s,0}(X)]_j)|ds\\
\le & C\min\big(\frac{1}{c_N\Delta v}, \frac{1}{\min\limits_{0\le s\le T}|[\Psi^{1,N}_{s,0}(X)]_i-[\Psi^{1,N}_{s,0}(X)]_j|\Delta v}\big).
\end{align*}
\item[(ii2)]
\begin{align*}
&\int^{T}_{0}|l_N([\Psi^{1,N}_{s,0}(X)]_i-[\Psi^{1,N}_{s,0}(X)]_j)|ds\\
\le & C\min\left(\frac{1}{\Delta v},\frac{\ln\left(\min\limits_{0\le s\le T}|[\Psi^{1,N}_{s,0}(X)]_i-[\Psi^{1,N}_{s,0}(X)]_j|\right)}{\Delta v}\right).
\end{align*}
\end{itemize}
\end{itemize}
\end{corollary}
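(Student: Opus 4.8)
The plan is to reduce every integral to a one-dimensional quadrature along the relevant trajectory, and then to split the time axis according to whether the trajectory is inside or outside the cut-off ball of radius $c_N$, respectively according to how far it has drifted away from the closest-approach point $t_{min}$. For part (i), I would first observe that by Lemma \ref{Lemma distance same order} the separation $r(s):=|\varphi^{1,N}_{s,0}(Z)-\varphi^{1,N}_{s,0}(Y)|$ is comparable, uniformly in $s\in[0,T]$, to $|\varphi^{N}_{s,0}(Z)-\varphi^{N}_{s,0}(Y)|$, hence $r(s)\ge c\,\Delta r$ always, and near $t_{min}$ one has the quantitative lower bound $r(s)\ge c\max(\Delta r,\Delta v|s-t_{min}|)$ coming from the fact that the relative velocity at $t_{min}$ is $\Delta v$ and the force is bounded by $f_{max}$, so the relative velocity cannot change much on the time scale $\Delta v/f_{max}$; for larger $|s-t_{min}|$ the position bound $r(s)\gtrsim\Delta r$ survives. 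Plugging $|h_N(\cdot)|\le C\min(c_N^{-2},r(s)^{-2})$ into $\int_0^T$ and using the elementary estimates $\int \min(c_N^{-2},(\Delta v|s-t_{min}|)^{-2})\,ds\lesssim (c_N\Delta v)^{-1}$ and $\int (\Delta r^2 + \Delta v^2|s-t_{min}|^2)^{-1}ds\lesssim (\Delta r\Delta v)^{-1}$, together with the trivial bound $T\cdot\Delta r^{-2}$ coming from $r(s)\ge c\Delta r$, gives (i1); (i2) is the same computation with the exponent lowered by one, which is why the $\Delta v^{-1}$ term acquires the logarithmic factor $\ln(\Delta v/\Delta r)$ from $\int_{\Delta r/\Delta v}^{T}(\Delta v\,\sigma)^{-1}d\sigma$.

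For part (ii), the point is to transfer the estimates just proved for the mean-field pair $(\varphi^N_{\cdot,0}(Y),\varphi^N_{\cdot,0}(Z))$ to the genuine $N$-particle pair $([\Psi^N_{\cdot,0}(X)]_i,[\Psi^N_{\cdot,0}(X)]_j)$. By the two sup-hypotheses, the true interparticle separation $\rho(s):=|[\Psi^{1,N}_{s,0}(X)]_i-[\Psi^{1,N}_{s,0}(X)]_j|$ satisfies $|\rho(s)-r(s)|\le 2N^{-\delta}\Delta v$, and since the hypothesis $N^\delta|\varphi^1_{t_{min}}(Y)-\varphi^1_{t_{min}}(Z)|\le\Delta v$ together with $r(s)\ge c\max(\Delta r,\Delta v|s-t_{min}|)$ forces $r(s)\gg N^{-\delta}\Delta v$ except on a short time window around $t_{min}$, one gets $\rho(s)\ge\tfrac12 r(s)$ away from that window, while on the window itself $\rho(s)$ is still $\gtrsim c_N$ on the set where $h_N$ or $l_N$ is evaluated outside its plateau — or else one simply uses the plateau bound $c_N^{-2}$ (resp. $c_N^{-1}$) on a window of length $\lesssim (c_N/\Delta v + N^{-\delta})$, which after choosing $N_0$ large contributes $\lesssim (c_N\Delta v)^{-1}$. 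This yields the first argument of the minima. For the second argument one replaces $r(s)$ by $\rho(s)$ throughout and writes $\int_0^T|h_N(\ldots)|ds\lesssim \int_0^T\rho(s)^{-2}ds\lesssim (\min_s\rho(s)\cdot\Delta v)^{-1}$ using the same near-$t_{min}$ linear-in-time lower bound on $\rho$ with slope $\asymp\Delta v$ (valid because, by the sup-hypotheses, the true relative velocity is within $O(N^{-\delta}\Delta v)$ of the mean-field one, which is $\asymp\Delta v$ near $t_{min}$), and analogously with the logarithm for $l_N$ in (ii2).

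The main obstacle I anticipate is making rigorous the claim that near $t_{min}$ the separation grows at least linearly in $|s-t_{min}|$ with slope comparable to $\Delta v$, \emph{both} for the mean-field flow and for the true flow, uniformly in $N$. For the mean-field flow this is clean: the relative acceleration is bounded by $2f_{max}$ by Lemma \ref{Lemma distance same order} and the definition of $f_{max}$, so $|\dot r_{rel} - \dot r_{rel}(t_{min})|\le 2f_{max}|s-t_{min}|$ and the relative velocity stays $\ge\Delta v/2$ on $|s-t_{min}|\le\Delta v/(4f_{max})$; combined with $t_{min}$ being a minimum of $r$ (so the relative velocity is "outgoing" on both sides, or one works on the two sides separately) this gives $r(s)\ge\Delta r + c\Delta v|s-t_{min}|$ locally, and then one must also handle $|s-t_{min}|\ge\Delta v/(4f_{max})$ where only $r(s)\ge c\Delta r$ is available — but there the integrand is already $\le C\Delta r^{-2}$ and the surviving time interval has length $\le T$, consistent with the claimed bounds. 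For the true flow the same argument works \emph{provided} the true relative acceleration is also $O(1)$ on the relevant window, which is not automatic since the true force contains the near-singular pair term $\tfrac1N f^N([\Psi^1]_i-[\Psi^1]_j)$; here one uses that $\rho(s)\gtrsim c_N$ on the window (by the transfer estimate above and a suitable choice of $\delta$ relative to $\beta$), so that single term is $O(N^{-1}c_N^{-1})=o(1)$, while the remaining $N-2$ terms are controlled by $f_{max}$ as before. Threading the inequalities between $\delta$, $\beta$, and the exponent hidden in $c_N$ so that all these windows are genuinely short and all error terms are $o(1)$ for $N\ge N_0$ is the delicate bookkeeping step; everything else is the elementary one-variable integration already sketched.
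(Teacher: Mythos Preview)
Your approach to part (i) is essentially the paper's: establish the near-$t_{min}$ linear lower bound $r(s)\gtrsim\max(\Delta r,\Delta v|s-t_{min}|)$ from the mean-field Lipschitz property and integrate. The paper does this by chopping $[0,T]$ into subintervals of $N$-independent length and localizing on each, but the quadrature is the one you wrote.

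For part (ii), your first paragraph already contains the correct mechanism and matches the paper. Your ``main obstacle'' paragraph, however, identifies a phantom difficulty and proposes a wrong fix. You \emph{never} need to bound the true relative acceleration. Writing
\[
[\Psi^{1,N}_{t,0}(X)]_i-[\Psi^{1,N}_{t,0}(X)]_j=({}^1\widetilde X_i-{}^1\widetilde X_j)+\int_{t'_{min}}^t\big([\Psi^{2,N}_{s,0}(X)]_i-[\Psi^{2,N}_{s,0}(X)]_j\big)\,ds,
\]
you replace the integrand by $\varphi^{2,N}_{s,0}(Y)-\varphi^{2,N}_{s,0}(Z)$ at cost $2N^{-\delta}\Delta v$ (your sup-hypotheses), then by its value at $t'_{min}$ at cost $C|s-t'_{min}|\Delta v$ (mean-field Lipschitz, Lemma~\ref{Lemma distance same order}), then by ${}^2\widetilde X_i-{}^2\widetilde X_j$ at cost $2N^{-\delta}\Delta v$ again. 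This already gives the linear growth of $\rho(s)$ with slope $\asymp\Delta v$, with no reference whatsoever to the true force. Your claim that ``the remaining $N-2$ terms are controlled by $f_{max}$'' is false and should be dropped: by definition $f_{max}=\sup_{N,s}\|f^N*\widetilde k^N_s\|_\infty$ bounds only the mean-field convolution, not the microscopic sum $\tfrac1N\sum_{k\ne i}f^N([\Psi^{1,N}]_i-[\Psi^{1,N}]_k)$, which the hypotheses of the corollary do not control. Consequently the ``threading of inequalities between $\delta$, $\beta$, $c_N$'' you worry about is unnecessary; the only smallness parameter actually used is $N^{-\delta}\to 0$.
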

\begin{proof}
\begin{itemize}
\item[(i)]
In a first step we want to derive an appropriate upper bound for the relative velocity between (mean-field) particles at times when they are `close' to each other. It will turn out by application of Lemma \ref{Lemma distance same order} that the variables $\Delta r$ and $\Delta v$ which we introduced in the assumptions of the Corollary are sufficient to determine such a bound. To this end, we remark that according to Lemma \ref{Lemma distance same order} there exists a constant $C\geq 1 $ such that for all $t\in [0,T]$ and $N\in \mathbb{N}$
\begin{align}
|\varphi^{N}_{t,0}(Z)-\varphi^{N}_{t,0}(Y)|\le C\min_{0\le s \le T}|\varphi^{N}_{s,0}(Z)-\varphi^{N}_{s,0}(Y)| . \label{cons.0}
\end{align}
Thus, it holds for arbitrary $t_1,t_2\in [0,T]$ that the condition 
\begin{align*}
&|\varphi^{1,N}_{t_1,0}(Z)-{\varphi^{1,N}_{t_1,0}}(Y)|\le  |\varphi^{2,N}_{t_1,0}(Z)-{\varphi^{2,N}_{t_1,0}}(Y)| 
\end{align*}
implies
\begin{align*}
|\varphi^{N}_{t_2,0}(Z)-{\varphi^{N}_{t_2,0}}(Y)|\le C|\varphi^{N}_{t_1,0}(Z)-{\varphi^{N}_{t_1,0}}(Y)|\le 2C |\varphi^{2,N}_{t_1,0}(Z)-{\varphi^{2,N}_{t_1,0}}(Y)| . 
\end{align*}
Hence, in any case it holds that 
\begin{align*}
&\max\big(|\varphi^{1,N}_{t_1,0}(Z)-{\varphi^{1,N}_{t_1,0}}(Y)|,|\varphi^{2,N}_{t_1,0}(Z)-{\varphi^{2,N}_{t_1,0}}(Y)|\big)\\
\geq  &\max\big(|\varphi^{1,N}_{t_1,0}(Z)-{\varphi^{1,N}_{t_1,0}(Y)|,\frac{1}{2C}|\varphi^{2,N}_{t_2,0}}(Z)-{\varphi^{2,N}_{t_2,0}}(Y)|\big) \\
\geq &\ \frac{1}{ 2C }  \max\big(\min_{0\le s \le T}|\varphi^{1,N}_{s,0}(Z)-{\varphi^{1,N}_{s,0}}(Y)|,|\varphi^{2,N}_{t_2,0}(Z)-{\varphi^{2,N}_{t_2,0}}(Y)|\big). 
\end{align*}
Let for $Y,Z\in \mathbb{R}^4$ $t_{min}\in [0,T]$ be a point in time where 
$$\min_{0\le s \le T}|\varphi^{1,N}_{s,0}(Z)-{\varphi^{1,N}_{s,0}}(Y)|=|\varphi^{1,N}_{t_{min},0}(Z)-{\varphi^{1,N}_{t_{min},0}}(Y)|=:\Delta r $$
as well as
 $$|\varphi^{2,N}_{t_{min},0}(Z)-{\varphi^{2,N}_{t_{min},0}}(Y)|=:\Delta v,$$
then the previous considerations (applied for $t_2=t_{min}$) yield that for any $t_1\in [0,T]$ the relation
\begin{align}
& \max\big(|\varphi^{1,N}_{t_1,0}(Z)-{\varphi^{1,N}_{t_1,0}}(Y)|,|\varphi^{2,N}_{t_1,0}(Z)-{\varphi^{2,N}_{t_1,0}}(Y)|\big)  \geq  \frac{1}{ 2C }  \max\big(\Delta r, \Delta v\big) \label{max phi dist}
\end{align}
is fulfilled which will be important shortly.\\  According to \cite{grass}[Lemma 2.13] there exists $C>0$ (independent of $X,X'\in \mathbb{R}^4$ and $N$) such that for arbitrary $0\le t_0,t \le T$ where $|t-t_0|\le 1$ 
 \begin{align}
 & \big|{\varphi^{2,N}_{t,t_0}}(X)-{^2\varphi^N_{t,t_0}}(X')-({^2X}-{^2X'})\big| \notag\\
\le & C|t-t_0|\big(|{^1X }-{^1X'}|+|{^2X }-{^2X'}|  |t-t_0|\big) \label{lem3.free.eq.}.
\end{align} 
Let for $t\in [0,T]$ and $C\geq 0$ $t'_{min}\in [t,\min\big(t+\frac{1}{C},T\big)]=:I_{t}$ denote (one of) the point(s) in time where 
$$\min_{s\in I_{t}}|\varphi^{1,N}_{s,0}(Z)-{\varphi^{1,N}_{s,0}}(Y)|=|\varphi^{1,N}_{t'_{min},0}(Z)-{\varphi^{1,N}_{t'_{min},0}}(Y)| $$ and for notational convenience we abbreviate
$$\widetilde{Z}:=\varphi^{N}_{t'_{min},0}(Z) \text{ and }\widetilde{Y}:=\varphi^{N}_{t'_{min},0}(Y).$$
If we choose $C=\lceil 2\sqrt{C} \rceil$ and regard the choice of $t'_{min}$ (in the third step), then relation \eqref{lem3.free.eq.} (applied for $t_0=t'_{min}$) yields that for $s\in I_t$
\begin{align*}
& |\varphi^{1,N}_{s,t'_{min}}(\widetilde{Z})-{\varphi^{1,N}_{s,t'_{min}}}(\widetilde{Y})| \\
\geq & \big| ({^1\widetilde{Z}}-{^1\widetilde{Y}})+({^2\widetilde{Z}}-{^2\widetilde{Y}})(s-t'_{min})\big|\\
& -\big|\int_{t'_{min}}^s\big( ^2\varphi^{N}_{r,t'_{min}}(\widetilde{Z})-{^2\varphi^{N}_{r,t'_{min}}}(\widetilde{Y})\big)-({^2\widetilde{Z}}-{^2\widetilde{Y}})dr\big| \\
\geq &\big|({^1\widetilde{Z}}-{^1\widetilde{Y}})+({^2\widetilde{Z}}-{^2\widetilde{Y}})(s-t'_{min})\big|\\
& - C\underbrace{|s-t'_{min}|^2}_{\le( \frac{1}{C})^2\le \frac{1}{4C}}\big(|{^1\widetilde{Z}}-{^1\widetilde{Y}}|+|{^2\widetilde{Z}}-{^2\widetilde{Y}}||s-t'_{min}|\big)\\
\geq & \max\big(|{^1\widetilde{Z}}-{^1\widetilde{Y}}|,|{^2\widetilde{Z}}-{^2\widetilde{Y}}||s-t'_{min}| \big)-\frac{1}{4}(|{^1\widetilde{Z}}-{^1\widetilde{Y}}|+|{^2\widetilde{Z}}-{^2\widetilde{Y}}||s-t'_{min}|)\\
\geq & \frac{1}{2} \max\big(|{^1\widetilde{Z}}-{^1\widetilde{Y}}|,|{^2\widetilde{Z}}-{^2\widetilde{Y}}||s-t'_{min}| \big)
\end{align*}
which implies that
\begin{align}
&\int^{\min(t+\frac{1}{C},T)}_{t}|l_N(\varphi^{1,N}_{s,0}(Z)-{\varphi^{1,N}_{s,0}}(Y))|ds \notag \\
\le &C\int^{\frac{1}{C}}_{0}\min\Big(\frac{1}{\max\big(|^1\widetilde{Z}-{^1\widetilde{Y}}|,|^2\widetilde{Z}-{^2\widetilde{Y}}|s\big)^{1}},c_N^{-1}\Big)ds \notag \\
\le &C\min\left(\frac{1}{|^2\widetilde{Z}-{^2\widetilde{Y}}|},\frac{\ln\left(\frac{|^2\widetilde{Z}-{^2\widetilde{Y}}|}{|^1\widetilde{Z}-{^1\widetilde{Y}}|}\right)}{|^2\widetilde{Z}-{^2\widetilde{Y}}|}\right) \label{coll.est.1}
\end{align}
and 
\begin{align}
&\int^{\min(t+\frac{1}{C},T)}_{t}|h_N(\varphi^{1,N}_{s,0}(Z)-{\varphi^{1,N}_{s,0}}(Y))|ds \notag \\
\le &C\int^{\frac{1}{C}}_{0}\min\Big(\frac{1}{\max\big(|^1\widetilde{Z}-{^1\widetilde{Y}}|,|^2\widetilde{Z}-{^2\widetilde{Y}}|s\big)^{2}},c_N^{-2}\Big)ds \notag \\
\le &C\min\Big(\frac{1}{c_N|^2\widetilde{Z}-{^2\widetilde{Y}}|},\frac{1}{|^1\widetilde{Z}-{^1\widetilde{Y}}||^2\widetilde{Z}-{^2\widetilde{Y}}|}\Big) \label{coll.est.1}
\end{align}
where we used the properties of $l_N$ and $h_N$ stated in the assumptions of the Corollary. \\
Moreover, the constraints on $l_N$ and $h_N$ directly imply that
$$\int^{t+\frac{1}{C}}_{t}|l_N(\varphi^{1,N}_{s,0}(Z)-{\varphi^{1,N}_{s,0}}(Y))|ds\le \frac{1}{C}\min\big(\frac{1}{|^1\widetilde{Z}-{^1\widetilde{Y}}|},\frac{1}{c_N}\big).$$ and
$$\int^{t+\frac{1}{C}}_{t}|h_N(\varphi^{1,N}_{s,0}(Z)-{\varphi^{1,N}_{s,0}}(Y))|ds\le \frac{1}{C}\min\big(\frac{1}{|^1\widetilde{Z}-{^1\widetilde{Y}}|^2},\frac{1}{c_N^2}\big).$$
After merging the upper bounds it follows that 
\begin{align}
& \int^{\min(t+\frac{1}{C},T)}_{t}|l_N(\varphi^{1,N}_{s,0}(Z)-{\varphi^{1,N}_{s,0}}(Y))|ds \notag \\
\le & C\min\left(\frac{1}{|^1\widetilde{Z}-{^1\widetilde{Y}}|},\frac{1}{c_N},\frac{1}{|^2\widetilde{Z}-{^2\widetilde{Y}}|},\frac{\ln\left(\frac{|^2\widetilde{Z}-{^2\widetilde{Y}}|}{|^1\widetilde{Z}-{^1\widetilde{Y}}|}\right)}{|^2\widetilde{Z}-{^2\widetilde{Y}}|}\right). \label{l bound cor}
\end{align}
\begin{align}
& \int^{\min(t+\frac{1}{C},T)}_{t}|h_N(\varphi^{1,N}_{s,0}(Z)-{\varphi^{1,N}_{s,0}}(Y))|ds \notag \\
\le & C\min\big(\frac{1}{|^1\widetilde{Z}-{^1\widetilde{Y}}|^2},\frac{1}{c_N^2},\frac{1}{c_N|^2\widetilde{Z}-{^2\widetilde{Y}}|},\frac{1}{|^1\widetilde{Z}-{^1\widetilde{Y}}||^2\widetilde{Z}-{^2\widetilde{Y}}|}\big), \label{l bound cor2}
\end{align}
for arbitrary $t$ such that the short interval $[t,\min(t+\frac{1}{C},T)]$ belongs to $[0,T]$.
Further $\frac{1}{C}$ can be chosen independent of $N$ and the considered configurations and according to relation \eqref{max phi dist}
and $|^1\widetilde{Z}-{^1\widetilde{Y}}|\geq\Delta r$ we obtain
{\allowdisplaybreaks
\begin{align*}
& \min\left(\frac{1}{|^1\widetilde{Z}-{^1\widetilde{Y}}|},\frac{1}{c_N},\frac{1}{|^2\widetilde{Z}-{^2\widetilde{Y}}|},\frac{\ln\left(\frac{|^2\widetilde{Z}-{^2\widetilde{Y}}|}{|^1\widetilde{Z}-{^1\widetilde{Y}}|}\right)}{|^2\widetilde{Z}-{^2\widetilde{Y}}|}\right)\\
\leq& C \min\left(\frac{1}{\Delta r},\frac{1}{c_N},\frac{1}{\Delta v},\frac{\ln(\frac{\Delta v}{\Delta r})}{\Delta v}\right)
\end{align*} }
and
{\allowdisplaybreaks
\begin{align*}
& \min\big(\frac{1}{|^1\widetilde{Z}-{^1\widetilde{Y}}|^2},\frac{1}{c_N^2},\frac{1}{c_N|^2\widetilde{Z}-{^2\widetilde{Y}}|},\frac{1}{|^1\widetilde{Z}-{^1\widetilde{Y}}||^2\widetilde{Z}-{^2\widetilde{Y}}|}\big)\\
\leq& C\min\big(\frac{1}{\Delta r^2},\frac{1}{c_N^2},\frac{1}{c_N\Delta v},\frac{1}{\Delta r\Delta v}\big).
\end{align*} }

Thus, estimates \eqref{l bound cor} and\eqref{l bound cor2} imply that 
\begin{align*}
& \int^{T}_{0}|l_N(\varphi^{1,N}_{s,0}(Z)-{\varphi^{1,N}_{s,0}}(Y))|ds\\
\le &\big\lceil\frac{T}{\frac{1}{C}}\big\rceil \sup_{0\le t\le T}\int^{\min(t+\frac{1}{C},T)}_{t}|l_N(\varphi^{1,N}_{s,0}(Z)-{\varphi^{1,N}_{s,0}}(Y))|ds\\
\le &C \min\left(\frac{1}{\Delta r},\frac{1}{c_N},\frac{1}{\Delta v},\frac{\ln(\frac{\Delta v}{\Delta r})}{\Delta v}\right).
\end{align*}
and
\begin{align*}
& \int^{T}_{0}|h_N(\varphi^{1,N}_{s,0}(Z)-{\varphi^{1,N}_{s,0}}(Y))|ds\\
\le &\big\lceil\frac{T}{\frac{1}{C}}\big\rceil \sup_{0\le t\le T}\int^{\min(t+\frac{1}{C},T)}_{t}|h_N(\varphi^{1,N}_{s,0}(Z)-{\varphi^{1,N}_{s,0}}(Y))|ds\\
\le & C\min\big(\frac{1}{\Delta r^2},\frac{1}{c_N^2},\frac{1}{c_N\Delta v},\frac{1}{\Delta r\Delta v}\big).
\end{align*}

\item[(ii)]
Let $X=(X_1,...,X_N)\in \mathbb{R}^{4N}$, $Y_i,Y_j\in \mathbb{R}^4$ and $\Delta v>0$ such that 
\begin{align}
\max_{k\in \{i,j\}}\sup_{0\le t \le T}|\varphi^{N}_{t,0}(Y_k)-[{\Psi}^{N}_{t,0}(X)]_k|\le N^{-\delta}\Delta v \label{Gleichung Cor. 0}
\end{align}
for some $\delta>0$ as well as  
\begin{align}
N^{\delta}|\varphi^{1,N}_{t_{min},0}(Y_i)-{\varphi^{1,N}_{t_{min},0}}(Y_j)|\le |\varphi^{2,N}_{t_{min},0}(Y_i)-{\varphi^{2,N}_{t_{min},0}}(Y_j)|= \Delta v. \label{Gleichung Cor. 1}
\end{align} 
where as usual $t_{min}$ shall again denote a point in time where $|\varphi^{1,N}_{\cdot ,0}(Y_i)-{\varphi^{1,N}_{\cdot,0}}(Y_j)|$ attains its minimal value on $[0,T]$. It follows by Lemma \ref{Lemma distance same order} that there exists a constant $C>0$ such that
\begin{align}
&\frac{\Delta v}{C}\le \min_{0\le t \le T}|\varphi^N_{t,0}(Y_i)-{\varphi^N_{t,0}}(Y_j)|
\le \max_{0\le t \le T}|\varphi^N_{t,0}(Y_i)-{\varphi^N_{t,0}}(Y_j)|
\le  C\Delta v  \label{Gleichung Cor. 1,5}
\end{align}
and for large enough $N\in \mathbb{N}$ it holds according to \eqref{Gleichung Cor. 0} that
\begin{align}
\min_{0\le t \le T}|[\Psi^{N}_{t,0}(X)]_i-[\Psi^{N}_{t,0}(X)]_j |
\geq   \min_{0\le t \le T}|\varphi^{N}_{t,0}(Y_i)-{\varphi^{N}_{t,0}}(Y_j)|-2N^{-\delta}\Delta v 
\geq  \frac{\Delta v}{2C}. \label{Gleichung Cor. 2}
\end{align}
Let for $t'\in [0,T]$ and $C>1$ $t'_{min}$ denote a point in time where $|[{\Psi}^{1,N}_{.,0}(X)]_i-[{\Psi}^{1,N}_{.,0}(X)]_j|$ attains its minimum on $[t',t'+\frac{1}{C}]$. For a compact notation we identify $\widetilde{X}_i:=[\Psi^{N}_{t'_{min},0}(X)]_i$ and $\widetilde{X}_j:=[\Psi^{N}_{t'_{min},0}(X)]_j$.\\
If $|^1\widetilde{X}_i-{^1\widetilde{X}_j}|\geq \frac{\Delta v}{4C}$, then the properties of $l_N$ and $h_N $ yield that 
\begin{align*}
&\int^{t'_{min}+\frac{1}{C}}_{t'_{min}}|l_N([\Psi^{1,N}_{s,0}(X)]_i-[\Psi^{1,N}_{s,0}(X)]_j)|ds
\le  C\min\big(\frac{1}{c_N}, \frac{1}{|^1\widetilde{X}_i-{^1\widetilde{X}_j}|}\big)\le  \frac{C}{\Delta v}
\end{align*} 
and 
\begin{align*}
\int^{t'_{min}+\frac{1}{C}}_{t'_{min}}|h_N([\Psi^{1,N}_{s,0}(X)]_i-[\Psi^{1,N}_{s,0}(X)]_j)|ds
\le&  C\min\big(\frac{1}{c_N^2}, \frac{1}{|^1\widetilde{X}_i-{^1\widetilde{X}_j}|^2}\big)\\
\le& \frac{C}{\Delta v}\min\big(\frac{1}{c_N}, \frac{1}{|^1\widetilde{X}_i-{^1\widetilde{X}_j}|}\big).
\end{align*}
For $|^1\widetilde{X}_i-{^1\widetilde{X}_j}|< \frac{\Delta v}{4C}$ it holds, due to  \eqref{Gleichung Cor. 2}, that
\begin{align} 4C|^1\widetilde{X}_i-{^1\widetilde{X}_j}|\le \Delta v \le 4C|^2\widetilde{X}_i-{^2\widetilde{X}_j}|. \label{Gleichung Cor. 3}  \end{align} 
Moreover, according to Lemma \cite{grass}[Lemma 2.1.3] it holds for all $|s-t_{min}|\le \frac{1}{C}$ that
\begin{align}
&\big|\varphi^{2,N}_{s,0}(Y_i)-{\varphi^{2,N}_{s,0}}(Y_j)- (\varphi^{2,N}_{t'_{min},0}(Y_i)-{\varphi^{2,N}_{t'_{min},0}}(Y_j)) \big| \notag \\
\le &C |s-t'_{min}|\Big(|\varphi^{1,N}_{t'_{min},0}(Y_i)-{\varphi^{1,N}_{t'_{min},0}}(Y_j)|  +|\varphi^{2,N}_{t'_{min},0}(Y_i)-{\varphi^{2,N}_{t'_{min},0}}(Y_j)| |s-t'_{min}|\Big) \notag
\end{align}
which by application of relations \eqref{Gleichung Cor. 0},\eqref{Gleichung Cor. 1,5} and \eqref{Gleichung Cor. 3} yields that
\begin{align}
&\big|\varphi^{2,N}_{s,0}(Y_i)-{\varphi^{2,N}_{s,0}}(Y_j)- (\varphi^{2,N}_{t'_{min},0}(Y_i)-{\varphi^{2,N}_{t'_{min},0}}(Y_j)) \big| \notag \\
\le &C |s-t'_{min}|\Big(\big(\underbrace{|^1\widetilde{X}_i-{^1\widetilde{X}_j}|}_{\le |^2\widetilde{X}_i-{^2\widetilde{X}_j}|}+2N^{-\delta}\Delta v\big)  +C\Delta v |s-t'_{min}|\Big) \notag 
\le  C|s-t'_{min}| |^2\widetilde{X}_i-{^2\widetilde{X}_j}|
\end{align}
According to this estimate and \eqref{Gleichung Cor. 0} it follows for sufficiently large values of $N\in \mathbb{N}$ and $|t-t'_{min}|\le \frac{1}{C}$ that

\begin{align*}
&\big|([\Psi^{1,N}_{t,0}(X)]_i-[\Psi^{1,N}_{t,0}(X)]_j) -(^1\widetilde{X}_i-{^1\widetilde{X}_j})-(^2\widetilde{X}_i-{^2\widetilde{X}_j})(t-t'_{min})\big| \notag \\
= & \big|\int_{t'_{min}}^{t}([\Psi^{2,N}_{s,0}(X)]_i-[\Psi^{2,N}_{s,0}(X)]_j)ds-(^2\widetilde{X}_i-{^2\widetilde{X}_j})(t-t'_{min}) \big| \notag \\
\le & \big|\int_{t'_{min}}^{t}([\Psi^{2,N}_{s,0}(X)]_i-[\Psi^{2,N}_{s,0}(X)]_j)-(\varphi^{2,N}_{s,0}(Y_i)-{\varphi^{2,N}_{s,0}}(Y_j))ds \big| \notag \\
& +\big|\int_{t'_{min}}^t(\varphi^{2,N}_{s,0}(Y_i)-{\varphi^{2,N}_{s,0}}(Y_j))- (\varphi^{2,N}_{t'_{min},0}(Y_i)-{\varphi^{2,N}_{t'_{min},0}}(Y_j))ds \big| \notag \\
& + \big|({\varphi^{2,N}_{t'_{min},0}}(Y_i)-{\varphi^{2,N}_{t'_{min},0}}(Y_j))-(^2\widetilde{X}_i-{^2\widetilde{X}_j})\big||t-t'_{min}|  \notag \\
\le & 2\sup_{k\in \{i,j\}}\sup_{0\le s \le T}|\varphi^N_{s,0}(Y_k)-[{\Psi}^N_{s,0}(X)]_k| |t-t'_{min}| \notag \\
&+  C|^2\widetilde{X}_i-{^2\widetilde{X}_j}||t-t'_{min}|^2 \notag \\
& +2\sup_{k\in \{i,j\}}\sup_{0\le s \le T}|\varphi^N_{s,0}(Y_k)-[{\Psi}^N_{s,0}(X)]_k| |t-t'_{min}| \notag \\
\le & 4 N^{-\delta}\underbrace{\Delta v}_{\le C|^2\widetilde{X}_i-{^2\widetilde{X}_j}|}|t-t'_{min}|+ C|^2\widetilde{X}_i-{^2\widetilde{X}_j}||t-t'_{min}|^2 
\end{align*}
where in the last step we applied \eqref{Gleichung Cor. 3}.\\ 
Thus, in this case the `real' particles fly apart almost like freely moving particles for a possibly short (but $N$- and $X$-independent) time span after a collision exactly like the mean-field particles. Just like in the proof of part (i2) this implies that
\begin{align*}
&\int^{t'_{min}+\frac{1}{C}}_{t'_{min}}|l_N([\Psi^{1,N}_{s,0}(X)]_i-[\Psi^{1,N}_{s,0}(X)]_j)|ds\\
\le & \frac{C}{|^2\widetilde{X}_i-{^2\widetilde{X}_j}|}\min\big(1,\ln(|^1\widetilde{X}_i-{^1\widetilde{X}_j}|\big)\\
\le & \frac{C}{\Delta v}\min\left(1,\ln\left(|^1\widetilde{X}_i-{^1\widetilde{X}_j}|\right)\right)
\end{align*}
and analogously
\begin{align*}
\int^{t'_{min}+\frac{1}{C}}_{t'_{min}}|h_N([\Psi^{1,N}_{s,0}(X)]_i-[\Psi^{1,N}_{s,0}(X)]_j)|ds
\le  \frac{C}{\Delta v}\min\big(\frac{1}{c_N}, \frac{1}{|^1\widetilde{X}_i-{^1\widetilde{X}_j}|}\big).
\end{align*}
where the last step follows again due to \eqref{Gleichung Cor. 3}.\\
Since $t'\in [0,T]$ was chosen arbitrarily and the length of the time interval $\frac{1}{C}$ can be selected independent of $X$ and $N$, this estimate can again be `extended' successively to the whole time period $[0,T]$ such that 
\begin{align*}
\int^{T}_{0}|l_N([\Psi^{1,N}_{s,0}(X)]_i-[\Psi^{1,N}_{s,0}(X)]_j)|ds
\le C\min\left(\frac{ \ln\left(|\Psi^{1,N}_{s,0}(X)]_i-[\Psi^{1,N}_{s,0}(X)]_j|\right)}{\Delta v},\frac{1}{\Delta v}\right)
\end{align*}
and analogously
\begin{align*}
\int^{T}_{0}|h_N([\Psi^{1,N}_{s,0}(X)]_i-[\Psi^{1,N}_{s,0}(X)]_j)|ds
\le  C\min\left(\frac{1}{c_N\Delta v},\frac{1}{\Delta v|\Psi^{1,N}_{s,0}(X)]_i-[\Psi^{1,N}_{s,0}(X)]_j|}\right).
\end{align*}
\end{itemize}
\end{proof}
\section{Derivation of Vlasov equation as the mean-field limit of particle systems with regularised interaction}
For simplification we consider two different groups of particles depending on their distance to other particles. The first set $\mathcal{M}_{b}$ of the bad once includes the particles $j\in \lbrace 1\hdots N\rbrace$ for which there exists a time $t\geq 0$ such that $|\bar{q}_j-\bar{q}_k|\leq N^{-b_{r}}$ and $|\bar{p}_j-\bar{p}_k|\leq N^{-b_{v}}$. They are expected to come very close to other particles with small relative velocity. 
In the end of this section we will show, that typically no bad particle occurs.
The second group consist of the reaming unproblematic good ones, which never come close to each while having small relative velocity are contained in $\mathcal{M}_g=M_b^c$.
Furthermore it depends only on their corresponding mean-field particle whether a particle is considered bad or good.
In the course of a simple notation we introduce collision classes, which turn out to be very important throughout the proof, as each collision class has a different impact on the force term. They are intended to cover all possible ways in which particles can meet and thus also the particle groups can be defined using this notation. 
\begin{definition}
For $r,R,v,V\in \R_{0}^{+}\cup \lbrace\infty\rbrace, t_1,t_2\in \lbrack 0,T\rbrack$ and $Y\in \R^4$ the set $M_{(r,R),(v,V)}^{N,(t_1,t_2)}(Y)\subset \R^4$ is defined as follows:
\begin{align*}
&Z\in M_{(r,R),(v,V)}^{N,(t_1,t_2)}(Y)\Leftrightarrow Z\neq Y \wedge\exists t\in \lbrack t_1,t_2\rbrack:\\
& r\leq \min_{t_1\leq s\leq t_2} |\varphi_{s,0}^1(Z)-\varphi_{s,0}^1(Y)|=|\varphi_{t,0}^1(Z)-\varphi_{t,0}^1(Y)|\leq R\wedge v\leq|\varphi_{t,0}^2(X)-\varphi_{t,0}^2(Y)|\leq V.
\end{align*}
\end{definition}
Here $(\varphi_{s,r}^N)_{s,r\in\R}$ is the one particle mean-field flow related to the considered initial density $k_0$.
For convenience, we will use the following short notation $
M_{R;V}^{N,(t_1,t_2)}(Y):=M_{(0,R),(0,V)}^{N,(t_1,t_2)}(Y),$ 
$M_{(r,R),(v,V)}^{N}(Y):= M_{(r,R),(v,V)}^{N,(0,T)}(Y)$ and
$M_{R,V}^{N}(Y):= M_{(0,R),(0,V)}^{N,(0,T)}(Y).
$

The set $G^N(Y)\subset\R^4$ of mean-field particles with no problematic distance, referred to as mean-field partners of good particles, is defined by
\begin{align}\label{Def:good}
G^N(Y):=(M^{N}_{4r_b,v_b})^c=\left(M^{N}_{4N^{-\frac{3}{5}-\sigma},N^{-\frac{3}{5}-\sigma}}\right)^c.
\end{align}
With this definition, we can partition the set of particles into two groups:
a bad group, where hard collisions are expected, and a group of the remaining good particles. These are defined by
\begin{align*}
\mathcal{M}_{g}^{N}(X) := \left\lbrace i \in {1, \ldots, N} : \forall j \in {1, \ldots, N} \setminus {i}, ; X_j \in G^N(X_i) \right\rbrace, \
\mathcal{M}_{b}^{N}(X):=\{1, \ldots, N\} \setminus \mathcal{M}_{g}^{N}(X).
\end{align*}
By definition, the classification of a particle as good or bad depends exclusively on the state of its associated mean-field particle.
The stopping time for the whole system is given by 
\begin{align}\label{stoppingtime}
\tau^N(X):=  \min(\tau_g^N(X),\tau_b^N(X)) 
\end{align}
where 
\begin{align*}
\tau_g^N&:=  \sup\lbrace t\in \lbrack 0,T\rbrack: \max_{i\in \mathcal{M}_{g}^{N}}\sup_{0\leq s\leq t}|\lbrack \Psi_{s,0}^N(X)\rbrack_i-\varphi_{s,0}^N(X_i)|\leq \delta_g^N=N^{-\frac{2}{5}+2\sigma}\rbrace\\
\tau_b^N&:=\sup\lbrace t\in \lbrack 0,T\rbrack: \max_{i\in \mathcal{M}_{b}^{N}}\sup_{0\leq s\leq t}|\lbrack \Psi_{s,0}^N(X)\rbrack_i-\varphi_{s,0}^N(X_i)|\leq \delta_b^N=N^{-\frac{2}{5}+2\sigma}\rbrace.
\end{align*}
Note that the distinction between good and bad particles is purely technical; for this reason, we have $\delta_g^N=\delta_b^N$.
We will see that configurations fulfilling $\tau^N(X)<T$ become sufficient small in probability for large values of $N$, so that the stopping time typically is not triggered, and hence Theorem \ref{maintheorem} follows.

The main part of the proof is based on application of Gronwall's Lemma to show that $\sup_{0\leq s\leq t}|\lbrack \Psi_{s,0}^N(X)\rbrack_i-\varphi_{s,0}^N(X_i)|_{\infty}$ stays typically small for large $N$.
Therefore we estimate the right derivative of $\sup_{0\leq s\leq t}|\lbrack \Psi_{s,0}^N(X)\rbrack_i-\varphi_{s,0}^N(X_i)|$, which is given by
\begin{align*}
& \frac{d}{dt_+}\sup_{0\le s\le t}|[\Psi^{1,N}_{s,0}(X)]_i-{\varphi^{1,N}_{s,0}}(X_i)|\\ \le &  |[\Psi^{2,N}_{t,0}(X)]_i-{\varphi^{2,N}_{t,0}}(X_i)|\\
\le &|\int_{0}^t\frac{1}{N}\sum_{j\neq i}f^{N}([\Psi^{1,N}_{s,0}(X)]_i-[\Psi^{1,N}_{s,0}(X)]_j)-f^{N}*\widetilde{k}^N_s({\varphi^{1,N}_{s,0}}(X_i))ds|.
\end{align*}
\subsection{Controlling the deviations of good particles}
In this chapter we focus on the case, that the labelled particle $X_i$ is good and use a similar proof technique as presented in \cite{grass,Dustin}.
First we break down the equation in terms of interaction partners. These can themselves be bad or good relative to $X_i$. Of course the set of particles having a bad interaction is empty in this case as having a unpleasant collision is symmetrical and consequently the underlying term will vanish later, but still it will be technically useful to split the equation in that way.
Let $i\in \mathcal{M}_g^N(X)$ and $0\leq t_1\leq t\leq T$ 
 \begin{align}
 & \left|\int_{t_1}^t\frac{1}{N}\sum_{j\neq i}f^{N}\left([\Psi^{1,N}_{s,0}(X)]_i-[\Psi^{1,N}_{s,0}(X)]_j)-f^{N}*\widetilde{k}^N_s({\varphi^{1,N}_{s,0}}(X_i)\right)ds\right|\notag\\
\le & \left|\int_{t_1}^t\frac{1}{N}\sum_{j\neq i}f^{N}([\Psi^{1,N}_{s,0}(X)]_i-[\Psi^{1,N}_{s,0}(X)]_j)\mathds 1_{(G^N(X_i))^c}(X_j)ds\right| \notag\\
& + \left|\int_{t_1}^t\left(\frac{1}{N}\sum_{j\neq i}f^{N}\left([\Psi^{1,N}_{s,0}(X)]_i-[\Psi^{1,N}_{s,0}(X)]_j\right)\mathds 1_{G^N(X_i)}(X_j)  -f^{N}*\widetilde{k}^N_s\left({\varphi^{1,N}_{s,0}}(X_i)\right)\right)ds\right| .\label{termmain}
\end{align}

Using triangle inequality several times on gets that the previous term \eqref{termmain} is bounded by
\begin{align}
 & \big|\int_{t_1}^t\frac{1}{N}\sum_{j\neq i}f^{N}([\Psi^{1,N}_{s,0}(X)]_i-[\Psi^{1,N}_{s,0}(X)]_j)\mathds 1_{(G^N(X_i))^c}(X_j)ds\big| \label{eq:good1} \\
& + \big|\int_{t_1}^t\frac{1}{N}\sum_{j\neq i}\Big(  f^{N}([\Psi^{1,N}_{s,0}(X)]_i-[\Psi^{1,N}_{s,0}(X)]_j)\mathds 1_{G^N(X_i)}(X_j)  \notag \\
& \ \ \ \ -f^{N}(\varphi^{1,N}_{s,0}(X_i)-{\varphi^{1,N}_{s,0}}(X_j))\mathds 1_{G^N(X_i)}(X_j)\Big) ds\big| \label{eq:good2} \\
& +\big| \int_{t_1}^t \frac{1}{N}\sum_{j\neq i}f^{N}(\varphi^{1,N}_{s,0}(X_i)-{\varphi^{1,N}_{s,0}}(X_j))\mathds 1_{G^N(X_i)}(X_j)ds \notag \\
& \ \ \ \ -\int_{t_1}^t\int_{\R^4}f^N({\varphi^{1,N}_{s,0}}(X_i)-{\varphi^{1,N}_{s,0}}(Y))\mathds 1_{G^N(X_i)}(Y)k_0(Y)d^4Yds\big| \label{eq:good3}  \\
& +\big|\int_{t_1}^t\int_{\R^4}f^N({\varphi^{1,N}_{s,0}}(X_i)-{\varphi^{1,N}_{s,0}}(Y))\mathds 1_{G^N(X_i)}(Y)k_0(Y)d^4Yds\notag\\
& \ \ \ \ -\int_{t_1}^tf^{N}*\widetilde{k}^N_s(\varphi^{1,N}_{s,0}(X_i))ds\big| \label{eq:good4} 
\end{align}

\subsubsection{Estimate of Term \ref{eq:good1} and Term \ref{eq:good4}}
Recall that $i\in \mathcal{M}_g^N(X)$ and note that the set $(G^N(X_i))^c=M^N_{4r_b,v_b}$ includes all particles, which come close to $X_i$ while having small relative velocity. Thus the characteristic function $\mathds 1_{(G^N(X_i))^c}(X_j)=0$ for $i\in \mathcal{M}_g^N(X)$ and therefore term (\ref{eq:good1}) vanishes and we are left to estimate term (\ref{eq:good4}). Due to the properties of $\varphi^{1,N}_{s,0}$ the following holds
\begin{align*}
 f^{N}*\widetilde{k}^N_s({\varphi^{1,N}_{s,0}}(X_i))
= \int_{\R^4}f^{N}({\varphi^{1,N}_{s,0}}(X_i)-{^1Y})k^N_s(Y)d^4Y
= \int_{ \R^4}f^{N}({\varphi^{1,N}_{s,0}}(X_i)-{\varphi^{1,N}_{s,0}}(Y))k_0(Y)d^4Y.
\end{align*}
So we get for term (\ref{eq:good4})
\begin{align*}
& \big|\int_{t_1}^t\int_{ \R^4}f^{N}(\varphi^{1,N}_{s,0}(X_i)-{\varphi^{1,N}_{s,0}}(Y))k_0(Y)\mathds 1_{G^N(X_i)}(Y)d^4Yds \notag 
-\int_{t_1}^tf^{N}*\widetilde{k}^N_s({\varphi^{1,N}_{s,0}}(X_i))ds\big| \notag \\
= & \big|\int_{t_1}^t\int_{ \R^4}f^{N}(\varphi^{1,N}_{s,0}(X_i)-{\varphi^{1,N}_{s,0}}(Y))k_0(Y)(\mathds 1_{G^N(X_i)}(Y)-1)d^4Yds\big| \notag \\
\le & T\|f^{N}\|_{\infty}\int_{ \R^4}\mathds 1_{(G^N(X_i))^c}(Y)k_0(Y)d^4Y \notag \\
\le & TN^{\beta}\mathbb{P}\big(Y \in \R^4:Y\notin G^N(X_i) \big)\\
\le&TN^{\beta}\mathbb{P}\big(Y\in \R^4:Y\in M^N_{r_b,v_b}(X_i)  \big)\\
\le& TN^{\beta-b_r-3b_v}  \le CN^{-\frac{2}{5}-4\sigma}
\end{align*}
\subsubsection{Law of large numbers to estimate Term \ref{eq:good2} and Term \ref{eq:good3}}
For the remaining terms \ref{eq:good2} and \ref{eq:good3}  we need a version of law of large numbers which takes into account the different types of collision classes which could occur. Each collision type has a different impact on the force and a certain probability. For that reason it is useful for the estimates to distinguish between them.
\begin{theorem}\label{Theorem LolN} Let $\delta,C>0$, $N{\in \N} $ and let $(X_{k})_{k\in \N}$ be a sequence of i.i.d. random variables $X_k:\Omega \to \R^4$ distributed with respect to a probability density $k\in \mathcal{L}^1(\R^4)$. Moreover, let $(M^N_i)_{i\in I}$ be a family of (possibly $N$-dependent) sets $M^N_i\subseteq \R^4$ fulfilling $\bigcup_{i \in I}M^N_i=\R^4$ where $|I|<C$ and $h_N:= \R^4\to \R$ measurable functions which fulfil on the one hand $\|h_N\|_{\infty}\le CN^{1-\delta}$ and on the other hand
\[\max_{i \in I}\int_{M^N_i}h_N(X)^2k(X)d^4X\le CN^{1-\delta}.\]
 Then for any $\gamma>0$ there exists a constant $C_{\gamma}>0$ such that for all $N\in\N$
	\begin{align} \mathbb{P}_t\Bigl[ \Bigl\lvert \frac{1}{N}\, \sum\limits_{j\neq i}^N h_N(X_k) -\int_{\R^4} h_N(X)k_t (X)d^4X \Bigr\rvert \geq 1\Bigr] \leq \frac{C_{\gamma}}{N^\gamma}.
	\end{align}
\end{theorem}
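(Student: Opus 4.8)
The plan is to prove Theorem~\ref{Theorem LolN} via a standard concentration-of-measure argument, specifically a high-moment (Markov) estimate applied to the centered sum, combined with a careful bookkeeping of the collision-class decomposition $\R^4=\bigcup_{i\in I}M_i^N$. First I would set $Y_k:=h_N(X_k)-\E[h_N(X_k)]$, so that the $Y_k$ are i.i.d., centered, with $\|Y_k\|_\infty\le 2CN^{1-\delta}$ and, crucially, second moment controlled: for each fixed $k$, writing $h_N = \sum_{i\in I} h_N\mathds 1_{M_i^N\setminus\bigcup_{i'<i}M_{i'}^N}$ we get $\E[h_N(X_k)^2]\le\sum_{i\in I}\int_{M_i^N}h_N^2\,k\,d^4X\le |I|\,CN^{1-\delta}\le C'N^{1-\delta}$, hence $\E[Y_k^2]\le C'N^{1-\delta}=:v_N$. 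The object to bound is $S_N:=\frac{1}{N}\sum_{k\ne i}Y_k$ and one wants $\p(|S_N|\ge 1)\le C_\gamma N^{-\gamma}$ for every $\gamma>0$.

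Next I would estimate $\E[(\sum_{k\ne i}Y_k)^{2m}]$ for an arbitrary even integer $2m$. Expanding the power and using independence plus centering, the only surviving terms are those in which every index appears at least twice; a standard combinatorial count (the same one used in the classical proof of the strong law via fourth moments, here pushed to order $2m$) gives
\begin{align*}
\E\Bigl[\Bigl(\sum_{k\ne i}Y_k\Bigr)^{2m}\Bigr]\le C_m\sum_{\ell=1}^{m} N^\ell \,v_N^{\ell}\,\bigl(2CN^{1-\delta}\bigr)^{2m-2\ell},
\end{align*}
where $\ell$ counts the number of distinct indices in a surviving term and we used $\E[|Y_k|^r]\le (2CN^{1-\delta})^{r-2}v_N$ for $r\ge 2$. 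Since $v_N\le C'N^{1-\delta}$, each summand is at most $C_m (CC'N^{1-\delta})^{2m-\ell}N^\ell = C_m'(CC')^{2m}N^{2m-\delta(2m-\ell)}$, which for $1\le\ell\le m$ is maximized at $\ell=m$, giving the bound $C_m'' N^{2m-\delta m}=C_m''(N^{2-\delta})^m$. Dividing by $N^{2m}$ and applying Markov's inequality,
\begin{align*}
\p\bigl(|S_N|\ge 1\bigr)=\p\Bigl(\Bigl|\sum_{k\ne i}Y_k\Bigr|^{2m}\ge N^{2m}\Bigr)\le \frac{\E[(\sum_{k\ne i}Y_k)^{2m}]}{N^{2m}}\le C_m''\,N^{-\delta m}.
\end{align*}
Finally, given $\gamma>0$ I would choose $m:=\lceil\gamma/\delta\rceil$, which yields $\p(|S_N|\ge 1)\le C_\gamma N^{-\gamma}$ with $C_\gamma=C_{\lceil\gamma/\delta\rceil}''$, and absorb the difference between $\frac{1}{N}\sum_{k\ne i}$ and $\frac{1}{N}\sum_{k}$ (one extra term, bounded by $\frac{1}{N}\cdot 2CN^{1-\delta}=2CN^{-\delta}\to 0$) into the constant for $N$ large, handling small $N$ trivially by enlarging $C_\gamma$.

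The main obstacle — and the only place requiring genuine care rather than routine bookkeeping — is the combinatorial moment estimate: one must verify that, after expanding $(\sum_k Y_k)^{2m}$ and discarding terms with a singleton index, the number of index-patterns with exactly $\ell$ distinct indices is bounded by a constant $C_m$ depending only on $m$ (not on $N$), and that the contribution of each such pattern is correctly bounded by a product of $\ell$ variances and $2m-2\ell$ sup-norm factors via repeated use of $|Y_k|\le 2CN^{1-\delta}$. The key structural input that makes the whole scheme work is the hypothesis $\max_{i\in I}\int_{M_i^N}h_N^2\,k\le CN^{1-\delta}$ together with $|I|<C$: these are exactly what upgrades the naive variance bound $\|h_N\|_\infty^2\sim N^{2-2\delta}$ to the far better $v_N\sim N^{1-\delta}$, and it is this gain of a factor $N^{\delta}$ per pair of indices that produces the arbitrarily fast polynomial decay. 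I would also note explicitly that the decomposition into collision classes is used only through this single scalar inequality, so no finer properties of the sets $M_i^N$ enter the proof.
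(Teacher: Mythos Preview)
Your proposal is correct and follows essentially the same approach as the paper's own proof: apply Markov's inequality to the $2m$-th moment of the centered sum, expand, discard terms with singleton indices by independence and centering, bound the surviving terms using $\E[|Y_k|^r]\le \|Y_k\|_\infty^{r-2}\,\E[Y_k^2]$ with $\E[Y_k^2]\le C'N^{1-\delta}$ coming from the cover $(M_i^N)_{i\in I}$, count index-patterns, and obtain $N^{-\delta m}$ decay with $m$ chosen according to $\gamma$. The paper organizes the same computation via multiindices $\gamma\in\N_0^N$ with $|\gamma|=2M$ rather than counting distinct indices directly, but the substance is identical.
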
	
\begin{proof}
	\noindent By Markov's inequality, we have for every $M \in \N$: 
	\begin{align} \mathbb{P}_t\Bigl[ \Bigl\lvert \frac{1}{N}\, \sum\limits_{j\neq i}^N h_N(X_k) -\int_{\R^4} h_N(X)k_t (X)d^4X \Bigr]\rvert\leq& \mathbb{E}\Bigl[ N^{-2M}\, \Bigl\lvert \frac{1}{N}\, \sum\limits_{j=1}^N h_N(X_k) - \int_{\R^4} h_N(X)k_t (X)d^4X \Bigr\rvert^{2M} \Bigr],
	\end{align}
	where $\mathbb{E}\lbrack\cdot\rbrack$ denotes the expectation with respect to the N-fold product of $k$.
	
	\noindent Let $\mathcal{M} :=  \lbrace \mathbf{\gamma} \in \N_0^N \mid \lvert \mathbf{\gamma} \rvert = 2M \rbrace$ the set of multiindices $\mathbf{\gamma} = (\gamma_1, ..., \gamma_N)$ with $\sum\limits_{j=1}^{N} \gamma_j = 2M$. Let
	\begin{equation} G_\mathbf{\gamma}(X) :=  \prod \limits_{j=1}^N \bigl( h_N(X_j) -\int_{\R^4} h_N(X)k_t (X)d^4X)^{\gamma_i}. \end{equation}
	\noindent Then 
	\begin{equation} N^{-2M}\mathbb{E} \Bigr[ \Bigl(\sum\limits_{k=1}^N h_N(X_k) -\int_{\R^4} h_N(X)k_t (X)d^4X\bigr) \Bigr)^{2M}\Bigr] \leq N^{-2M}\sum_{\gamma_1,\hdots,\gamma_N\in\mathcal{M}} \mathbb{E} \Bigr[ \Bigl(G_\mathbf{\gamma}(X))^{\gamma_k}\Bigr]. \end{equation}
	
	\noindent Now we note that  $\mathbb{E}(G_\mathbf{\gamma }) = 0$ whenever there exists a $1 \leq j \leq N$ such that $\gamma_j =1$. This can be seen by integrating the j'th variable first.\\

	\noindent For the remaining  terms, we have for any $1 \leq m \leq M$:
	\begin{align*} \lvert \bigl( h_N(X_j) -\int_{\R^4} h_N(X)k_t (X)d^4X\bigl)^{\gamma_i}\rvert \leq2^{\gamma_k} |h_N(X_j)|^{\gamma_k} +|\int_{\R^4} h_N(X)k_t (X)d^4X| ^{\gamma_k} .
	\end{align*}
	Now as $||h_N||\leq CN^{1-\delta}$ it follows for $m\geq 2$ 
	
	\begin{align*}
	&\int_{\R^4} \lvert h\rvert^m (X)k_t(X) \, \mathrm{d}^4X \leq C \max_{i\in I}\int_{M_i^N} \lvert h\rvert^m (X)k_t(X) \, \mathrm{d}^4X\\
	\leq &C ||h_N||_{\infty}^{m-2}\max_{i\in I}\int_{M_i^N} h_N(X)^2k(X)\mathrm{d}^4X\leq C (C^{m-2}N^{(m-2)(1-\delta)})(CN^{1-\delta})
	\end{align*} 
	Now let $R:= \sqrt{\int_{\R^4}h_N^2(X)\mathrm{d}^4X}$ then it holds that
	\begin{align*}
	&\int_{\mathbb{R}^4} \lvert h_N(X)\rvert k_t(X) \, 1\mathrm{d}^4X \leq \frac{1}{R}\underbrace{ \int_{\mathbb{R}^4}  h_N^2 (X)k_t(X) }_{=R^2}+\underbrace{ \int_{\mathbb{R}^4}  |h_N (X)|\mathds 1_{\lbrack 0,R\rbrack}(h_N(X))k_t(X) }_{\leq R}
\\
&\leq 2\big(C\max_{i\in I}\int_{M_i^N} h_N^2(X)k_t(X)\mathrm{d}^4X\big)^{\frac{1}{2}}	\leq C M^{\frac{1}{2}(1-\delta)}.
	\end{align*}

 Since the constraints on the maps $h_N$ are more restrictive the larger the value of $\delta$ is chosen, we can limit the considered values to (for example) $(0,1]$. If we identify additionally $|\gamma|:= |\{i\in\{1,...,N\}: \gamma_i\neq 0\}|$ and recall that only tuples matter where $\gamma_i \neq 1\ $ for all $i\in \{1,...,N\}$ as well as $\sum_{i=1}^N\gamma_i=2M$, then application of these estimates and relation above yields that. For the other multiindices, we get (using that the particles are statistically independent):
	\begin{align*}\label{LLNestimate} \mathbb{E}_t (G^\mathbf{\gamma}) \leq  &\prod \limits_{j=1:\gamma_i\geq 2}^N  \big(C^{\gamma_i}N^{(\gamma_i-2)(1-\delta)}N^{1-\delta} \big)\leq C^{2M}N^{2M(1\delta)}N^{|\gamma|(\delta-1)}
	\end{align*}
 Finally, we observe that for any $l \geq 1$, the number of multiindices $\mathbf{\gamma} \in \mathcal{M} $ with $\#\mathbf{\gamma} = l$ is bounded by
	\begin{equation*} \sum\limits_{\#\mathbf{\gamma} = l} 1 \leq \binom{N}{l} (2M)^l  \leq (2M)^{2M} N^l. \end{equation*}
	 Thus
	\begin{align*}  \frac{1}{N^{2M}} \sum\limits_{\mathbf{\gamma} \in \mathcal{M}} \mathbb{E}(G^\mathbf{\gamma})
	\leq  \frac{N^{2M(1-\delta)}}{N^{2M}}\, \sum_{\gamma\in\mathcal{M}} C^{M}N^{|\gamma|(\delta-1)}
	\leq (CM)^{M} N^{-\delta M},
	\end{align*}
	where $C$ is some constant depending on $M$ and 
	and choosing $M$ arbitrary large proofs the Theorem.
\end{proof}
\subsubsection{Estimate of Term \ref{eq:good3}}
To show that the third term (\ref{eq:good3}), respectively 
\begin{align*}
&\left| \int_{t_1}^t\left( \frac{1}{N}\sum_{j\neq i}f^{N}(\varphi^{1,N}_{s,0}(X_i)-{\varphi^{1,N}_{s,0}}(X_j))\mathds 1_{G^N(X_i)}(X_j)\right.\right.\\
 & \ \ \ \ \ \ \ \ \ -\left.\left.\int_{\mathbb{R}^4}f^N({\varphi^{1,N}_{s,0}}(X_i)-{\varphi^{1,N}_{s,0}}(Y))\mathds 1_{G^N(X_i)}(Y)k_0(Y)d^4Y\right)ds\right|
\end{align*}
  stays small for typical initial data. Therefore we define for arbitrary $Y\in\mathbb{R}^4$ the function (based on the function from the Theorem \ref{Theorem LolN})
\begin{align}
h_{1,N}^t(y,\cdot):\mathbb{R}^4\rightarrow\mathbb{R}^2,X\mapsto N^{\alpha}\int_{0}^{t}f^{N}(\varphi^{1,N}_{s,0}(Y)-{\varphi^{1,N}_{s,0}}(X))ds \mathds 1_{G^N(Y)}(X),
\end{align}	\label{h_{1,N}}
with $\alpha=\frac{2}{5}-2\sigma$.
The function $h_{1,N}^t(Y,\cdot)$ does not map to $\mathbb{R}$ but it can be applied on each component separately and thus fullfills the assumptions of Theorem \ref{Theorem LolN}.

We are left to check if the assumptions of Theorem \ref{Theorem LolN} on the force term are fulfilled. We obtain by Corollary \ref{corollary phi and  psi} and Lemma \ref{Prob of group} for $0\leq v\leq V$, $0\leq r\leq R$ the following:
\begin{align}\label{Var(f)}
&\int_{M^N_{(r,R),(v,V)}(Y)}\left(\int_{0}^t |f^{N}(\varphi^{1,N}_{s,0}(Z)-{\varphi^{1,N}_{s,0}}(Y))|ds\right)^2k_0(Z) d^4Z \notag \\
\le & C\left(\min\left(\frac{1}{\Delta r},\frac{1}{c_N},\frac{\ln(\frac{\Delta v}{\Delta r})}{\Delta v},\frac{1}{\Delta v}\right)\right)^2\int_{M^N_{(r,R),(v,V)}(Y)}k_0(Z) d^4Z\notag \\
\le &C\min\left(\frac{1}{\Delta r^2},\frac{1}{\Delta c_N^2},\frac{\ln(\frac{\Delta v}{\Delta r})^2}{\Delta v^2},\frac{1}{\Delta v^2}\right)\min \left(1,R,RV^3+R^2\max\left(V^2,R^2\right)\right)\notag \\
\le &C \min\left(\frac{1}{\Delta r^2},\frac{1}{ c_N^2},\frac{\ln(\frac{\Delta v}{\Delta r})^2}{\Delta v^2},\frac{1}{\Delta v^2},\frac{R}{\Delta r^2},\frac{R}{ c_N^2},\frac{R\ln(\frac{\Delta v}{\Delta r})^2}{\Delta v^2},\frac{R}{\Delta v^2},\right.\\
&\quad\left.\frac{RV^3+R^2\max(V^2,R^2)}{\Delta r^2},\frac{RV^3+R^2\max(V^2,R^2)}{c_N^2},\right.\notag\\&\quad\left.\frac{RV^3+R^2\max(V^2,R^2)\ln(\frac{\Delta v}{\Delta r})^2}{\Delta v^2},\frac{RV^3+R^2\max(V^2,R^2)}{\Delta v^2} \right)\notag.
\end{align}
Now we will defind a suitable cover of $\mathbb{R}^4$, i.e. the collision classes, in order to apply the Theorem \ref{Theorem LolN}. The classes are chosen finer as the collision strength becomes larger. If the particles keep distance of order 1 even no splitting will be necessary.
Let therefore be $k,l\in\mathbb{Z},N\in\N\setminus \lbrace 1\rbrace,\eta>0$ and $0\leq r,v\leq 1$ and the family of sets are given by

\begin{align}\label{family of sets}
(i)\ &M_{(0,r)(0,v)}^{N}(Y)  &(ii) \  &M_{(0,r)(N^{l\eta}v,N^{(l+1)\eta}v)}^{N}(Y)& \notag \\
(iii) \ &M_{(0,r)(1,\infty)}^{N}(Y)  & (iv) \ &M_{(N^{k\eta}r,N^{N(k+1)\eta}r)(0,v)}^{N}(Y)& \notag\\
(v) \ &M_{(N^{k\eta}r,N^{(k+1)\eta}r)(N^{l\eta}v,N^{(l+1)\eta}v)}^{N}(Y)& (vi) \ &M_{(N^{k\eta}r,N^{(k+1)\eta}r)(1,\infty)}^{N}(Y)\notag \\
(vii) \ &M_{(N^{-\eta},\infty)(0,\infty)}^{N}(Y), &
\end{align}
for $ 0\leq k\leq \lfloor\frac{\ln(\frac{1}{r})}{\eta\ln(N)}\rfloor,0\leq l\leq \lfloor\frac{\ln(\frac{1}{v})}{\eta\ln(N)}\rfloor$.
In this case we choose $r=v=N^{-\beta}$ and the number of sets belonging to this list is some integer $I_{\eta}$ independent of $N$.\\
We will apply \eqref{Var(f)} for each collision class family and for $0\leq k,l\leq \lfloor\frac{\beta}{\eta}\rfloor$ we receive the bounds 
\begin{align*}
(i)\ &\frac{(N^{-\beta})^4}{(N^{-\beta})^2}=N^{-2\beta} 
& (ii)\  &\frac{N^{-\beta}N^{3[(l+1)\eta-\beta]}}{N^{2(l\eta-\beta)}}=N^{-2\beta+l\eta+3\eta} & \\
(iii)\ &\ln(N^{-\beta})^2N^{-2\beta} 
&\!\!\! (iv)\  &N^{2(k+1)\eta}N^{-2\beta}=N^{-2\beta+2k\eta+2\eta}& \\
(v)\ &\frac{N^{(k\eta+\eta-\beta)}N^{3(l\eta+\eta-\beta)}}{N^{2(l\eta -\beta)}}+\frac{N^{4(k\eta-\beta)}}{N^{2(k\eta -\beta)}}  & (vi)\ &N^{2(k+1)\eta}N^{-2\beta}=N^{-2\beta+2k\eta+2\eta}\\&=N^{-2\beta+l\eta+k\eta+4\eta}+N^{-2\beta+2k\eta+4\eta}  \\
(vii)\  &N^{2\eta}.
\end{align*}
All these terms are bounded by $N^{2\eta}$.
For law of large numbers argument we need
$\|h_{1,N}\|_{\infty}\le CN^{1-\delta}$ and 
$\max_{i \in I}\int_{M^N_i}h_{1,N}(X)^2k(X)d^4X\le CN^{1-\delta}$.
Due to the estimates above it follows for all $i\in I$
\begin{align*}
& \int_{M^N_{(r_i,R_i),(v_i,V_i)}(Y)}h^t_{1,N}(Y,Z)^2k_0(Z)d^{4}Z
\le CN^{2\alpha-2\eta},
\end{align*} 
for $\eta>0$ chosen small enough and $\alpha=\frac{2}{5}-2\sigma$. Thus for $N$ large enough  it follows $2\alpha-2\eta<1$ and the first assumption of Theorem \ref{Theorem LolN} is fulfilled. 

It holds due to Corollary \ref{corollary phi and  psi} that for a point in time $t_{min}$, where the mean-field particles are close
\begin{align}
& \int_{0}^t|f^{N}(\varphi^{1,N}_{s,0}(Y)-{\varphi^{1,N}_{s,0}}(Z))|\mathds 1_{G^N(Z)}(Y)ds \notag \\
\le & \min\left(\frac{Ct}{|\varphi^{1,N}_{t_{min},0}(Y)-{\varphi^{1,N}_{t_{min},0}}(Z)|},\frac{C}{| \varphi^{2,N}_{t_{min},0}(Y)-{\varphi^{2,N}_{t_{min},0}}(Z)|},\frac{C\ln\left(|\varphi^{1,N}_{t_{min},0}(Y)-{\varphi^{1,N}_{t_{min},0}}(Z)|\right)}{| \varphi^{2,N}_{t_{min},0}(Y)-{\varphi^{2,N}_{t_{min},0}}(Z)|}\right)\mathds 1_{G^N(Z)}(Y) .
\end{align} 
This is where we break down the time integral into several parts. If $v$ is large, the assumptions of Theorem \ref{Theorem LolN} are fulfilled directly. If $v$ is small we made use of the fact that the collision time is not very large.
The following implication holds due to the definition of $$
G^N(Z):= \left(M^N_{4N^{-\frac{3}{5}-\sigma},N^{-\frac{3}{5}-\sigma}}(Z)\right)^c.
$$ We have
\begin{align*}
 x_{min}&:= |\varphi^{1,N}_{t_{min},0}(Y)-{\varphi^{1,N}_{t_{min},0}}(Z)|\leq N^{-r_b}
\Rightarrow v_{min}:= |\varphi^{2,N}_{t_{min},0}(Y)-{\varphi^{2,N}_{t_{min},0}}(Z)|\geq N^{-v_b}\\
N^{-r_b}\leq x_{min}&\Rightarrow v_{min}\in \mathbb{R}^+.
\end{align*}
and thus the term is bounded in the first case (small inter particle distance) by $
CtN^{v_b}
$ and
for the second case is bounded by 
$
CN^{b_r}.
$           
For sufficiently small $\sigma > 0$ and $\alpha=\frac{2}{5}-2\sigma$ we get
\begin{align*}
||h_{1,N}^t(Y,\cdot)||_{\infty}\leq N^{\alpha}C (N^{r_b}+N^{v_b})\leq CN^{1-\sigma}.
\end{align*}
We now apply our estimate on $h_{1,N}^t(y)$ defined in \eqref{h_{1,N}} to control term \ref{eq:good3}.
 Therefore we introduce the set $\mathcal{B}_{1,i}^{N,\sigma}\subset\mathbb{R}^{4N},i\in\lbrace 1,\hdots, N\rbrace$:
\begin{align}
\begin{split}
& X\in \mathcal{B}_{1,i}^{N,\sigma}\subseteq \mathbb{R}^{4N}\\ \label{set.B_1 good}
\Leftrightarrow & \exists t_1,t_2\in [0,T]:\\
& \Big|\frac{1}{N}\sum_{j\neq i}\int_{t_1}^{t_2}f^{N}(\varphi^{1,N}_{s,0}(X_i)-{\varphi^{1,N}_{s,0}}(X_j))\mathds 1_{G^N(X_i)}(X_j)ds\\
&  -\int_{\mathbb{R}^4}\int_{t_1}^{t_2} f^{N}(\varphi^N_{s,0}(X_i)-\varphi^N_{s,0}(Y))\mathds 1_{G^N(X_i)}(Y)dsk_0(Y)d^4Y\Big| >N^{-\alpha}.
\end{split}
\end{align}
As law of large numbers only makes statements for certain points in time but on the other hand tells us that that at one considered moment large fluctuations are extremely unlikely and furthermore on very short time intervals fluctuations cannot change significantly since the force is bounded due to the cut off by $N^{\beta}$, so this problem can be solved. 
By the definition of the set $\mathcal{B}_{1,i}^{N,\sigma}$ and by the fact that for any continuous map  $a:\mathbb{R}\rightarrow\mathbb{R}^m,m\in\N,t_1,t_2\in\lbrack 0,T\rbrack$ it holds that
\begin{align*}
& \big|\int_{t_1}^{t_2 }a(s)ds\big| = \big|\int_{0}^{t_2 }a(s)ds-\int_{0}^{t_1 }a(s)ds\big| \notag \\
\le & \big|\int_{0}^{\lfloor \frac{t_2}{\delta_N}\rfloor \delta_N }a(s)ds\big|+\int_{\lfloor \frac{t_2}{\delta_N}\rfloor \delta_N }^{t_2}|a(s)|ds +  \big|\int_{0 }^{\lfloor \frac{t_1}{\delta_N}\rfloor \delta_N}a(s)ds\big|+\int_{\lfloor \frac{t_1}{\delta_N}\rfloor \delta_N }^{t_1}|a(s)|ds \notag \\
\le & 2\max_{k\in \{0,...,\lfloor \frac{T}{\delta_N}\rfloor \}}\Big(\big|\int_{0 }^{k \delta_N}a(s)ds\big| +\int_{k \delta_N }^{(k+1) \delta_N }|a(s)|ds\Big),
\end{align*}
it follows for $\delta_N>0$ that:
\begin{align*}
& X\in \mathcal{B}_{1,i}^{N,\sigma} \\ 
\Rightarrow & \exists k\in \{0,...,\lfloor \frac{T}{\delta_N } \rfloor \}:  \notag \\
&  \Big(\big|\int_{0}^{k \delta_N }\Big(\frac{1}{N}\sum_{j\neq i}f^{N}(\varphi^{1,N}_{s,0}(X_i)-{\varphi^{1,N}_{s,0}}(X_j))\mathds 1_{G^N(X_i)}(X_j)   \\
& -\int_{\mathbb{R}^4} f^{N}(\varphi^{1,N}_{s,0}(X_i)-{\varphi^{1,N}_{s,0}}(Y))\mathds 1_{G^N(X_i)}(Y)k_0(Y)
d^4Y\Big) ds\big| \geq \frac{N^{-\alpha}}{4}\Big) \ \vee  \\
& \Big(\int_{k \delta_N }^{(k+1) \delta_N }\Big(\big|\frac{1}{N}\sum_{j\neq i}f^{N}(\varphi^{1,N}_{s,0}(X_i)-{\varphi^{1,N}_{s,0}}(X_j))\mathds 1_{G^N(X_i)}(Y)
\big|  \\
& +\big|\int_{\mathbb{R}^4} f^{N}(\varphi^{1,N}_{s,0}(X_i)-{\varphi^{1,N}_{s,0}}(Y))\mathds 1_{G^N(X_i)}(Y)k_0(Y)
d^4Y\big|\Big) ds \geq \frac{N^{-\alpha}}{4}\Big)
\end{align*}

If we choose $\delta_N:= \frac{N^{-\alpha}}{8||f_N||_{\infty}}\leq CN^{-\alpha-\beta}$ the second constraint of the asumption is true.
As $\beta=2$ and $\alpha=\frac{2}{5}-2\sigma$ the number of such events is bounded by $\lfloor\frac{T}{\delta_N}\rfloor +1\le CN^{\frac{12}{5}-2\sigma}  $ 
 and thus it holds for all $N\in\mathbb{N}$ that
\begin{align*}
\mathbb{P}\left(\exists i\in\lbrace 1,\hdots,N\rbrace:X\in\mathcal{B}_{1,i}^{N\sigma}\right)&\leq N \mathbb{P}\left(X\in\mathcal{B}_{1,i}^{N\sigma}\right)\\
&\leq N \left(CN^{\frac{12}{5}-2\sigma}\left(C_{\gamma+\frac{17}{5}-2\sigma}N^{-(\gamma+\frac{17}{5}-2\sigma)}\right)\right)\\
&\leq \tilde{C}_{\gamma}N^{-\gamma}.
\end{align*}
For typical initial data and large enough $N\in\N$ term (\ref{eq:good3}) stays smaller than $N^{-\alpha}=N^{-\frac{2}{5}-2\sigma}.$

\subsubsection{Estimate of Term \ref{eq:good2}}
To estimate term (\ref{eq:good2}), i.e. the difference of the real force acting on the real particles and the real force acting on the mean-field particles
\begin{align*}
 \left|\int_{t_1}^t\frac{1}{N}\sum_{j\neq i}\left(  f^{N}\left([\Psi^{1,N}_{s,0}(X)]_i-[\Psi^{1,N}_{s,0}(X)]_j\right)
-f^{N}\left(\varphi^{1,N}_{s,0}(X_i)-{\varphi^{1,N}_{s,0}}(X_j)\right)\right)\mathds 1_{G^N(X_i)}(X_j) ds\right|. 
\end{align*} 
We abbreviate the following notation for the allowed difference between mean-field particle and the real one, depending on the group membership. We allow less control if the particle is bad but have strict requirements if the particle is good.
 $ \Delta^N_g(t,X)$ describes the largest spatial deviation of the good particles, $\Delta^N_b(t,X)$ the corresponding value for the bad ones. 
\begin{align*}
 \Delta^N_g(t,X)&:= \max_{j\in \mathcal{M}^N_g(X)}\sup_{0\le s\le t}|[\Psi^{1,N}_{s,0}(X)]_j-{\varphi^{1,N}_{s,0}}(X_j)| \notag \\
 \Delta^N_b(t,X)&:= \max_{j\in \mathcal{M}^N_b(X)}\sup_{0\le s\le t}|[\Psi^{1,N}_{s,0}(X)]_j-{\varphi^{1,N}_{s,0}}(X_j)|.  \notag 
\end{align*}
We further introduce a subgroup of the good particles 
$\widetilde{G}^N(\cdot):= G^N(\cdot)\cap \left(M^N_{2N^{-\frac{2}{5}+2\sigma},\infty}(\cdot)\right)^c$ which will be used to simplify the forthcoming estimates.
By definition of $\widetilde{G}^N(\cdot)$ (applied for the first inequality) and the stopping time $\tau^N(X)$
\begin{align*}
\tau_g^N:= \sup\lbrace t\in \lbrack 0,T\rbrack: \max_{i\in \mathcal{M}_{g}^{N}}\sup_{0\leq s\leq t}|\lbrack \Psi_{s,0}^N(X)\rbrack_i-\varphi_{s,0}^N(X_i)|\leq \delta_g^N\\
\tau_b^N:= \sup\lbrace t\in \lbrack 0,T\rbrack: \max_{i\in \mathcal{M}_{b}^{N}}\sup_{0\leq s\leq t}|\lbrack \Psi_{s,0}^N(X)\rbrack_i-\varphi_{s,0}^N(X_i)|\leq \delta_b^N
\end{align*}
as well as $\tau^N(X):= \min(\tau_g^N(X),\tau_b^N(X))$ with $\delta_g^N=\delta_b^N=N^{-\frac{2}{5}+2\sigma}$ 
 it holds for $X_j\in \widetilde{G}^N(X_i)$ and times $s\in [0,\tau^N(X)]$ that $
 \max\big( 2N^{-\beta},\frac{2}3|{\varphi^{1,N}_{s,0}}(X_j)-{\varphi^{1,N}_{s,0}}(X_i)|\big)\geq  \max \big( 2N^{-\beta},2N^{-\frac{2}{5}+2\sigma}\big) \geq 2\Delta^N_g(t,X).$
 
In the next step we subdivide the sum according to whether the particle interacting with $i$ is itself  bad or good.
Furthermore, the map $g^N$ was defined such that $|f^N(q+\delta)-f^N(q)|\le g^N(q)|\delta|$ for $q,\delta \in \mathbb{R}^2$ where $ \max\big(2 N^{-\beta},\frac{2}{3}|q|\big)\geq |\delta|$ (see definition \eqref{force g}) and thus the subsequent estimates are fulfilled for all $0\le t_1\le t\le \tau^N(X)$.
\begin{align}
\eqref{eq:good3}\leq & \int_{0}^t\Big(\frac{1}{N}\sum_{\substack{j\neq i\\ j\in \mathcal{M}^N_{b}(X)}}\Big(\big|f^{N}([\Psi^{1,N}_{s,0}(X)]_j-[\Psi^{1,N}_{s,0}(X)]_i) \notag  \\
&- f^{N}(\varphi^{1,N}_{s,0}(X_j)-{\varphi^{1,N}_{s,0}}(X_i))\big|\Big)\mathds 1_{G^N(X_i)}(X_j)\Big) ds\label{term b}\\
&+\int_{0}^t\Big(\frac{1}{N}\sum_{\substack{j\neq i\\ j\in \mathcal{M}^N_g(X)}}\Big(\big|f^{N}([\Psi^{1,N}_{s,0}(X)]_j-[\Psi^{1,N}_{s,0}(X)]_i)\big| \notag  \\
&+ \big|f^{N}(\varphi^{1,N}_{s,0}(X_j)-{\varphi^{1,N}_{s,0}}(X_i))\big|\Big)\mathds 1_{G^N(X_i)\cap M^N_{2N^{-\frac{2}{5}-2\sigma},\infty}(X_i)}(X_j)\Big) ds \label{term c}\\
 & + \int_{0}^t\frac{2}{N}\sum_{\substack{j\neq i\\ j\in \mathcal{M}^N_g(X)}}g^{N}(\varphi^{1,N}_{s,0}(X_j)-{\varphi^{1,N}_{s,0}}(X_i))\Delta^N_g(s,X)\mathds 1_{\widetilde{G}^N(X_i)}(X_j) ds. \label{term c2}
\end{align}
To estimate Term \ref{term c2} we define a set of atypical events
\begin{align}
\begin{split}
& X\in \mathcal{B}_{2,i}^{N,\sigma}\subseteq \mathbb{R}^{4N} \label{def.B_2 good}\\
\Leftrightarrow & \exists t_1,t_2\in [0,T]:\\
& \Big|\frac{1}{N}\sum_{j\neq i}\int_{t_1}^{t_2}g^{N}({\varphi^{1,N}_{s,0}}(X_j)-{\varphi^{1,N}_{s,0}}(X_i))\mathds 1_{\widetilde{G}^N(X_i)}(X_j)ds\\
&  -\int_{\mathbb{R}^4}\int_{t_1}^{t_2} g^{N}(\varphi^{1,N}_{s,0}(Y)-{\varphi^{1,N}_{s,0}}(X_i))\mathds 1_{\widetilde{G}^N(X_i)}(Y)dsk_0(Y)
d^4Y\Big| > 1
\end{split} 
\end{align}
For $Y,Z\in \mathbb{R}^4$ it holds by definition of $\widetilde{G}^N(\cdot)$ and the definition of $g^N$ (defined in Definition \ref{force g}) that
\begin{align}
& \int_{0}^t g^{N}(\varphi^{1,N}_{s,0}(Y)-{\varphi^{1,N}_{s,0}}(Z)) \mathds 1_{\widetilde{G}^N(Z)}(Y)ds \notag \\
\le  & C
N^{\frac{2}{5}-2\sigma}\int_{0}^t|f^{N}(\varphi^{1,N}_{s,0}(Y)-{\varphi^{1,N}_{s,0}}(Z))|\mathds 1_{G^N(Z)}(Y)ds. 
\end{align}
This term fulfils the assumptions of the Theorem \ref{Theorem LolN} like in the previous section and the same reasoning as used previously for the map $h^t_N(Y,\cdot)$ works to show that for an arbitrary $\gamma>0$ there exists $C_{\gamma}>0$ such that for all $N\in \mathbb{N}$
\begin{align}
\mathbb{P}\big(\exists i\in \{1,...,N\}:X\in \mathcal{B}_{2,i}^{N,\sigma}\big)\le C_{\gamma} N^{-\gamma}.
\end{align}
The next step is to derive an upper bound for Term \eqref{term b} and Term \eqref{term c}. Term \eqref{term b} models the interaction of a good particle with a bad particle. We will show that, in general, bad particles do not infect the good ones.
 
After introducing the function $h^t_N(Y,\cdot)$ according to Theorem~\ref{Theorem LolN}, as in the previous part, defined by
\begin{align}
h_N^t(Y,\cdot) : \mathbb{R}^4 \rightarrow \mathbb{R}^2, \quad Z \mapsto N^{\alpha} \int_{0}^{t} f^{N}\big(\varphi^{1,N}_{s,0}(Y) - \varphi^{1,N}_{s,0}(Z)\big) \, ds \, \mathds{1}_{G^N(Y)}(Z),
\end{align}
with $\alpha = \frac{2}{5} - 2\sigma$, we introduced a family of collision classes $\big(M^N_{(r_i,R_i),(v_i,V_i)}(Y)\big)_{i \in I_\delta}$ which forms a cover of $\mathbb{R}^4$. Subsequently, we verified that the function $h^t_N(Y,\cdot)$, together with this covering, satisfies the assumptions of Theorem~\ref{Theorem LolN}. This allows us to derive an upper bound for the term~\eqref{term b}.
Therefore let $\big(M^N_{(r_i,R_i),(v_i,V_i)}(Y)\big)_{i \in I_{\delta}}$ denote the family of sets defined in \eqref{family of sets} but this time for the parameters $r:= r_b \ v:= 4v_b$. Thus we define for $i\in  \{1,...,N\}$ the set $ \mathcal{B}_{3_b,i}^{N,\sigma}\subseteq \mathbb{R}^{4N}$ as follows:
\begin{align}
\begin{split}
& X\in \mathcal{B}_{3_b,i}^{N,\sigma} \subseteq \mathbb{R}^{4N} \\
\Leftrightarrow & \exists l\in I_{\sigma}: \Big(R_l\neq \infty\ \land\\ &\sum_{j \in \mathcal{M}^N_b(X)}\mathbf{1}_{M^N_{(r_l,R_l),(v_l,V_l)}(X_i)}(X_j) \geq  N^{\sigma\frac{3}{4}}\big\lceil N^{\frac{3}{4}} R_l^2\min\big(\max(V_l,R_l),1\big)^4\big\rceil\Big) \ \vee \label{def.B_3b}\\
& \sum_{j \in \mathcal{M}^N_b(X)}1=| \mathcal{M}^N_b(X)|\geq N^{2}v_{b}^3r_{b}=N^{-\frac{2}{5}-4\sigma}.
\end{split}
\end{align} 
The last line gives an estimate of the absolute number of bad particles and the line above an estimate of how many bad particles come close to a good one given a certain inter-particle distance and velocity and we can see at this point that typically no bad particle occurs.
\subsubsection{Estimate of Term \eqref{term b}}
We now derive an upper bound for term \eqref{term b} under the condition that $X\in \big(\mathcal{B}_{3_{b},i}^{N,\sigma}\big)^c$ and prove later that $\mathbb{P}\big( X\in \mathcal{B}_{3_{b},i}^{N,\sigma}\big)$ gets small as $N$ increases.
To this end, we abbreviate for $0\le r\le R $ and $0\le v \le V$
$$ \widetilde{M}^N_{(r,R),(v,V)}(X_i):= G^N(X_i)\cap M^N_{(r,R),(v,V)}(X_i)$$
to distinguish between the collision classes.
In the analysis of Term~\eqref{term b}, we restrict the parameters $r$ and $R$ to satisfy the condition
\begin{align}
\big(r = 0 \land R = 4\delta^N_{b} = 4N^{-\frac{2}{5} -2\sigma}\big) \;\vee\; \big( r \geq 4\delta^N_{b} \land R = N^{\sigma} r \big), \label{cond M delta bad}
\end{align}
as, within the family of collision sets $\big(M^N_{(r_i,R_i),(v_i,V_i)}(Y)\big)_{i \in I_{\sigma}}$, only indices corresponding to such parameter choices contribute significantly to the estimate. Consequently, it suffices to consider exclusively those $(r,R)$ combinations that comply with the relation~\eqref{cond M delta bad}.

Before stopping time is triggered it holds that $$\sup_{0\le s \le t}|\Psi^N_{s,0}(X)-\Phi^N_{s,0}(X)|_{\infty}\le \delta^N_{b}=N^{-\frac{2}{5}-2\sigma}$$ and thus we obtain that for $0\le t\le \tau^N(X)$ depending on the choice of $r$ term \eqref{term b} can be estimated by
\begin{align*}
 & \int_{0}^t\frac{1}{N}\sum_{\substack{j\neq i\\ j\in \mathcal{M}^N_b(X)}}\Big(\big|f^{N}([\Psi^{1,N}_{s,0}(X)]_j-[\Psi^{1,N}_{s,0}(X)]_i)\notag  \\
&- f^{N}(\varphi^{1,N}_{s,0}(X_j)-{\varphi^{1,N}_{s,0}}(X_i))\big|\Big) \mathbf{1}_{\widetilde{M}^N_{(r,R),(v,V)}(X_i)}(X_j) ds \\
\le & \int_{0}^t\frac{1}{N}\sum_{\substack{j\neq i\\ j\in \mathcal{M}^N_b(X)}} \Big(\big|f^{N}([\Psi^{1,N}_{s,0}(X)]_j-[\Psi^{1,N}_{s,0}(X)]_i)\big|\notag  \\
&+ \big|f^{N}(\varphi^{1,N}_{s,0}(X_j)-{\varphi^{1,N}_{s,0}}(X_i))\big|\Big)\mathbf{1}_{\widetilde{M}^N_{(r,R),(v,V)}(X_i)}(X_j) ds \mathbf{1}_{[0,4\delta^N_b]}(r)\notag \\
& +\frac{2}{N}\Delta^N_b(t,X)\sup_{Y\in \widetilde{M}^N_{(r,R),(v,V)}(X_i)}\int_0^t g^{N}(\varphi^{1,N}_{s,0}(Y)-{\varphi^{1,N}_{s,0}}(X_i))ds  \notag \\ 
& \cdot  \sum_{\substack{j\neq i\\ j\in \mathcal{M}^N_b(X)}} \mathbf{1}_{\widetilde{M}^N_{(r,R),(v,V)}(X_i)}(X_j)\mathbf{1}_{[4\delta^N_b,\infty)}(r) 
\end{align*}
where we applied Lemma~\ref{LipschitzLemma for f}, under the assumption that $\max\big(2N^{-\beta}, \frac{2}{3}|q|\big) \geq |\delta|$ holds.
The index $i$ is assigned to good particles, whereas $j$ is assigned to bad particles.
 Applying Corollary \ref{corollary phi and psi} we obtain that term \ref{term b} is bounded by
\begin{align}
\frac{C}{N}\frac{1}{\Delta v}\sum_{\substack{j\neq i\\ j\in \mathcal{M}^N_b(X)}} \mathbf{1}_{\widetilde{M}^N_{(r,R),(v,V)}(X_i)}(X_j)\mathbf{1}_{[0,4\delta^N_b]}(r)
+ \frac{C}{N}\frac{\Delta_b^N(t,X)}{\max(r,v)}\sum_{\substack{j\neq i\\ j\in \mathcal{M}^N_b(X)}} \mathbf{1}_{\widetilde{M}^N_{(r,R),(v,V)}(X_i)}(X_j)\mathbf{1}_{[4\delta^N_b,\infty)}(r) . \label{bound 2b}
\end{align}

We remark that the assumptions of the Corollary are indeed fulfilled in the current situation since according to the constraints on the possible parameters (see \eqref{cond M delta bad}) $r\in [0,4\delta^N_b]$ implies $R=\delta^N_b$ and $r=0$ an by regarding the definition of $ G^N(X_i)$ it follows that
$$\widetilde{M}^N_{(0,4\delta^N_b),(v,V)}(X_i)=M^N_{(0,4\delta^N_b),(v,V)}(X_i)\cap G^N(X_i)\subseteq \big(M^N_{4N^{-b_r},N^{-b_v}}(X_i)\big)^c$$  To derive an upper bound for term~\eqref{bound 2b} under the assumption that
\begin{align*}
\sum_{j \in \mathcal{M}b^N(X)}\mathbf{1}_{M^N_{(r,R),(v,V)}(X_i)}(X_j) \le N^{\gamma\sigma} \left\lceil N^{\gamma} R \min\big(\max(V, R), 1\big)^3 \right\rceil,
\end{align*}
we treat separately the contributions corresponding to the cases $\mathbf{1}_{[0, 4\delta^N_b]}(r)$ and $\mathbf{1}_{[4\delta^N_b, \infty)}(r)$. For the first of them we already discussed that $r=0$ and $R=4\delta^N_b$ due to condition \eqref{cond M delta bad}. We obtain that
\begin{align}
&\frac{C}{N}\frac{1}{\Delta v}\sum_{\substack{j\neq i\\ j\in \mathcal{M}^N_{b}(X)}}\mathbf{1}_{\widetilde{M}^N_{(r,R),(v,V)}(X_i)}(X_j)\mathbf{1}_{[0,4\delta^N_{b}]}(r)\notag\\
&\le\frac{C}{N}\left(\frac{R\min(V,1)^3|\mathcal{M}_{b}|}{\max( v,v_b)}+\frac{CN^{\gamma\sigma}}{\max( v,v_b)}\right)\notag\\
&\le\frac{CR \min(V,1)^3|\mathcal{M}_{b}|}{\max(N^{-b_v},v)N}+\frac{CN^{\gamma\sigma}}{N\max(N^{-b_v},v)}\notag\\
&\le C N^{-\frac{2}{5}-2\sigma-1+2-3b_v-b_r}\frac{ \min(V,1)^3}{\max(N^{-v_{b}},v)}+N^{b_v-1+\gamma\sigma}\notag \\
&\le C N^{-\frac{9}{5}-6\sigma}\frac{ \min(V,1)^3}{\max(N^{-v_{b}},v)}+N^{-\frac{2}{5}+2\sigma}  \label{term b.1}
\end{align} 
where we applied $R=\delta^N_b=N^{-\frac{2}{5}-2\sigma}$.\\
Taking additionally into account that $\Delta^N_b(t,X)\le N^{-\frac{2}{5}+2\sigma}$ as well as $R=N^{\sigma}r$ for $r\geq 4\delta^N_b$ (see \eqref{cond M delta bad}) it follows for the second term of \eqref{bound 2b} that
\begin{align}
& \frac{C}{N}\frac{\Delta_{b}^N(t,X)}{r\max(r,v)}\sum_{\substack{j\neq i\\ j\in \mathcal{M}^N_b(X)}} \mathbf{1}_{\widetilde{M}^N_{(r,R),(v,V)}(X_i)}(X_j) \notag \\
\le & \frac{C}{N}\Big( \frac{N^{-b_{r}-3b_{v}+2}R\min\big(\max(V,R),1\big)^3}{ r\max(r,v)}  +\frac{N^{\gamma\sigma}}{ \max(r,v)}\Big) N^{-\frac{2}{5}+2\sigma} \notag \\
\le & C\Big( \frac{\min\big(\max(V,R),1\big)^3}{\max(r,v)} N^{-\frac{9}{5}-\sigma}+N^{-\frac{7}{5}+\sigma}\Big).\label{term b.2}
\end{align}
In total term \eqref{term b} is bounded by the sum of term \eqref{term b.1} and term \eqref{term b.2}. 
All sets which belong to the family $\big(M^N_{(r_i,R_i),(v_i,V_i)}(Y)\big)_{i \in I_\sigma}$ are contained in a collision class, which takes one of the subsequent forms, except for $M^N_{(N^{-\sigma},\infty),(0,\infty)}(Y)$, which we will discuss separately:
\begin{multicols}{2}
\begin{itemize}
\item[(i)] $ M^N_{(0,4\delta^N_b),(0,4\delta^N_b)}(Y)$
\item[(ii)]$M^N_{(0,4\delta^N_b),(v,N^{\sigma}v)}(Y) $
\item[(iii)]  $M^N_{(0,4\delta^N_b),(1,\infty)}(Y)$
\item[(iv)] $M^N_{(r,N^{\sigma}r),(0,4\delta^N_b)}(Y)$
 \item[(v)] $M^N_{(r,N^\sigma r),(v,N^\sigma v)}(Y)$
\item[(vi)] $  M^N_{(r,N^\sigma r),(1,\infty)}(Y),$
\end{itemize}
\end{multicols}
\noindent for $r,v\in [0,1]$.
For all possible $r,R,v,V$ of the list term \eqref{term b.1} and term \eqref{term b.2} is bounded if $X\in \big(\mathcal{B}^{N,\sigma}_{3,i}\big)^c$ and $\sigma>0$ is chosen small enough and thus in total term \eqref{term b} is bounded by
\begin{align}
 &    CN^{-\frac{2}{5}+2\sigma},\label{bound2gb}
\end{align} 
as the number of collision classes $|I_{\sigma}|$ is independent of $N$.
The worst-case configuration among the possible sets arises when 
$v = N^{-\sigma}$ or $r = N^{-\sigma}$, with the sets being of type~(ii), (iv), or~(v).
For the last class, given by $M^N_{(N^{-\sigma},\infty),(0,\infty)}(Y)$, the following holds if $X\in \big(\mathcal{B}^{N,\sigma}_{3,i}\big)^c$ and times $t\le \tau^N(X)$:

\begin{align}
& \int_{0}^t\frac{1}{N}\sum_{\substack{j\neq i\\ j\in \mathcal{M}^N_{b}(X)}}\Big(\big|f^{N}([\Psi^{1,N}_{s,0}(X)]_j-[\Psi^{1,N}_{s,0}(X)]_i)\notag  - f^{N}(\varphi^{1,N}_{s,0}(X_j)-{\varphi^{1,N}_{s,0}}(X_i))\big|\Big) \mathbf{1}_{M^N_{(N^{-\sigma},\infty),(0,\infty)}(X_i)}(X_j) ds \notag\\
\le & \frac{2}{N}\sup_{Y\in M^N_{(N^{-\sigma},\infty),(0,\infty)}(X_i)}\int_0^t g^{N}(\varphi^{1,N}_{s,0}(Y)-{\varphi^{1,N}_{s,0}}(X_i))ds  \notag   \sum_{\substack{j\neq i\\ j\in \mathcal{M}^N_{b}(X)}} \underbrace{\Delta^N_b(t,X)}_{\le N^{-b\delta}}\notag \\
\le & \frac{2}{N} \big(T\frac{C}{(N^{-\sigma})^2}\big)N^{-b_{\delta}}\underbrace{|\mathcal{M}^N_{b}(X)|}_{\le  N^{2-b_r-3b_v}} \notag 
\le  CN^{-\frac{9}{5}}.
\end{align}
Now it only remains to show that the probability related to the set $\mathcal{B}_{3,i}^{N,\sigma}$ is indeed small enough. The proof proceeds as in \cite{grass}, with the argument adapted to the two-dimensional case.
It holds for $R,V>0$ 
\begin{align}
& \sum_{j \in \mathcal{M}_b^N(X)}\mathbf{1}_{M^N_{R,V}(X_i)}(X_j)\geq  \big\lceil N^{\frac{\sigma}{2}}\lceil N^{2-b_r-3b_v} R\min\big(\max(V,R),1\big)^3\rceil\big\rceil \label{unlikely set}\\
\Rightarrow & \big(\ \exists j \in \{1,...,N\}: \sum_{k=1}^N \mathbf{1}_{(G^N(X_j))^c}(X_k)\geq\lceil  \frac{N^{\frac{\sigma}{2}}}{2}\rceil  \big)\ \vee \label{unlikely set 1} \\
& \Big(\exists \mathcal{S}\subseteq \{1,...,N\}^2\setminus \bigcup_{n=1}^N\{(n,n)\}: \notag \\
& \ \text{(i)}\ \ \ |\mathcal{S}|= \left\lceil  \frac{N^{-\frac{\sigma}{2}}\big\lceil N^{\frac{\sigma}{2}}\lceil N^{2-b_r-3b_v} R\min\big(\max(V,R),1\big)^3\rceil\big\rceil}{2}\right\rceil  \notag \\ & \  \text{(ii)} \ \ \forall (j,k)\in \mathcal{S}:X_j\in(G^N(X_k))^c\cap M^N_{R,V}(X_i)   \notag \\
&\ \text{(iii)} \ (j_1,k_1),(j_2,k_2)\in \mathcal{S}\Rightarrow  \{j_1,k_1\}\cap \{j_2,k_2\}=\emptyset\Big),\label{unlikely set 2}
\end{align}
since $j\in \mathcal{M}^N_b(X)$ implies that there exists a $X_k\in \big(G^N(X_j)\big)^c$ for $k\in \{1,...,N\}\setminus \{j\}$.
For brevity of notation, we will refer to events of the form $X_m \in M^N_{R,V}(X_n)$ as collisions between particles $m$ and $n$. Similarly, we will refer to events of the form $X_m \in \big( G(X_n) \big)^c$ as hard collisions, 
although these are not collisions in the physical sense.

If the first assumption \eqref{unlikely set 1} does not hold, then an arbitrary bad particle can have at most $\left\lceil \frac{N^{\frac{\sigma}{2}}}{2}\right\rceil $ hard collisions with different particles.
Having a collision, and thus being a bad particle, is symmetric. 
In other words, it can infect at most 
$\left\lceil \frac{N^{\frac{\sigma}{2}}}{2} \right\rceil$ 
further particles to belong to the set $\mathcal{M}_b^N(X)$. Under this constraint the relation $$\sum_{j \in \mathcal{M}_b^N(X)}\mathbf{1}_{M^N_{R,V}(X_i)}(X_j)\geq \big\lceil N^{\frac{1}{2}\sigma}\lceil N^{2-b_r-3b_v} R\min\big(\max(V,R),1\big)\rceil \big\rceil $$ implies that the event related to \eqref{unlikely set 2} is fulfilled because if \eqref{unlikely set} is fulfilled, then there exists a set $\mathcal{C}_0\subseteq \mathcal{M}^N_b(X)$ with $ |\mathcal{C}_0|\geq \big\lceil N^{\frac{\sigma}{2}}\lceil N^{2-b_r-3b_v} R\min\big(\max(V,R),1\big)^3\rceil\big\rceil$ of bad particles which all have a collision with the particle labelled by $i$.
In the case that the event related to \eqref{unlikely set 1} does not occur there exist at most $\left\lfloor \frac{N^{\frac{\sigma}{2}}}{2}\right\rfloor $ particles having a hard collision with particle $i$.

 We construct a subset $\mathcal{C}_1\subseteq \mathcal{C}_0$ by detaching all of those, having a hard collision with particle $i$, which are (possibly) contained in $\mathcal{C}_0$ and then it holds that $$|\mathcal{C}_1|\geq \big\lceil N^{\frac{\sigma}{2}}\lceil N^{2-b_r-3b_v} R\min\big(\max(V,R),1\big)^3\rceil\big\rceil -\lfloor \frac{N^{\frac{\sigma}{2}}}{2}\rfloor \geq 1. $$Similarly we take one of these remaining bad particles $j_1$ out of $\mathcal{C}_1$ and since $j_1\in \mathcal{C}_1\subseteq \mathcal{C}_0\subseteq \mathcal{M}^N_b(X)$, there must be at least one further particle having a \textit{hard collisions} with $j_1$. By construction of $\mathcal{C}_1$ this can not be $i$. 
Lets pick one of this hard collision partners $k_1$ to get our first tuple $(j_1,k_1)$ which full fills condition (ii) of the set $\mathcal{S}$ appearing in \eqref{unlikely set 2}. 
In a next step detach $j_1$ and $k_1$ and all of their at most $(2\lfloor \frac{N^{\frac{\sigma}{2}}}{2}\rfloor -2)$ (possibly existing) remaining `\textit{hard collision} partners' from $\mathcal{C}_1$ to obtain a new set $\mathcal{C}_2\subseteq \mathcal{C}_1$. 
Finally, this gives us an iteration process (provided that $\mathcal{C}_2\neq \emptyset$) by choosing the next particle $j_2$ out of $\mathcal{C}_2$ and afterwards an arbitrary one of its hard collision partners $k_2$. Then the next round can start after having removed $j_2$ and $k_2$ as well as their (possibly existing) remaining hard collision partners from $\mathcal{C}_2$ to obtain $\mathcal{C}_3\subseteq \mathcal{C}_2$. 
By construction after each round of this process at most $2\lfloor \frac{N^{\frac{\sigma}{2}}}{2}\rfloor $ particle labels are removed from the set $\mathcal{C}_k$ to obtain $\mathcal{C}_{k+1}$.
 We repeat this procedure at least $$ \left\lceil \frac{\big\lceil N^{\frac{\sigma}{2}}\lceil N^{2-b_r-3b_v} R\min\big(\max(V,R),1\big)^3\rceil\big\rceil -\lfloor \frac{N^{\frac{\sigma}{2}}}{2}\rfloor }{N^{\frac{\sigma}{2}}}\right\rceil\geq \left\lceil \frac{N^{-\frac{\sigma}{2}}M}{2}\right\rceil $$ times and we additionally regarded that $\big\lceil N^{\frac{\sigma}{2}}\lceil N^{2-b_r-3b_v} R\min\big(\max(V,R),1\big)^3\rceil\big\rceil\geq N^{\frac{\sigma}{2}}$.
This provides us a set $\mathcal{S}$ consisting of tuples $(j_i,k_i)$ like claimed in \eqref{unlikely set 2}. The  removal of the hard collision partners of the occurring tuples after each round ensures that condition (iii) is fulfilled.  \\
Due to this considerations we can determine an upper bound for the probability $\mathbb{P}(X\in \mathcal{B}_{3,i}^{N,\sigma})$. Let to this end be $R, V>0$ and start with assumption \eqref{unlikely set 2} and abbreviate 
$$M_1:= \left\lceil \frac{N^{-\frac{\sigma}{2}}\big\lceil N^{\frac{\sigma}{2}}\lceil N^{2-b_r-3b_v} R\min\big(\max(V,R),1\big)^3\rceil\big\rceil}{2}\right\rceil .$$ 
There exist less than $\binom{N^2}{K}$ different possibilities to choose $K$ disjoint  (condition (iii) of \eqref{unlikely set 2} is fulfilled) pairs $(j,k)$ belonging to $\{1,...,N\}^2\setminus \bigcup_{n=1}^N\{(n,n)\}$. Application of this in the first step and Lemma \ref{Prob of group} and $\sup_{Y\in \mathbb{R}^4}\mathbb{P}\big(X_1\in (G^N(Y))^c\big)\le C N^{2-b_r-3b_v}$ yields that
{\allowdisplaybreaks \begin{align}
&\mathbb{P}\Big(\exists \mathcal{S}\subseteq \{1,...,N\}^2\setminus \bigcup_{n=1}^N\{(n,n)\}:|\mathcal{S}|=M_1 \ \land \notag \\ & \hspace{0,6cm} \big(\forall (j,k)\in \mathcal{S}:X_j\in(G^N(X_k))^c\cap M^N_{R,V}(X_i)  \big) \ \land \notag \\
& \hspace{0,6cm} \big((j_1,k_1),(j_2,k_2)\in \mathcal{S}\Rightarrow  \{j_1,k_1\}\cap \{j_2,k_2\}=\emptyset\big)\Big) \notag  \\
\le & \binom{N^2}{M_1}\mathbb{P}\Big( \forall (j,k)\in \{(2,3),(4,5),...,(2M_1,2M_1+1)\}: \notag \\
&\hspace{1,5cm} X_j\in(G^N(X_k))^c\cap M^N_{R,V}(X_1) \Big) \notag  \\
\le &\frac{N^{2M_1}}{M_1!}\Big(\sup_{Y\in \mathbb{R}^4}\mathbb{P}\big(X\in (G^N(Y))^c\big)
\sup_{Z\in \mathbb{R}^4}\mathbb{P}\big(X\in M^N_{R,V}(Z)\big)\Big)^{M_1} \notag \\
\le & C^{M_1}\frac{N^{2M_1}}{M_1^{M_1}} \big(N^{-b_r-3b_v}\big)^{M_1}\Big(R\min\big(\max(V,R),1\big)^3\Big)^{M_1}\notag \\
\le &  (CN^{-\frac{5\sigma}{2}} )^{\frac{N^{\frac{\sigma}{2}} }{2}},
\end{align} }
where we used in the last step, that $M_1\geq \frac{N^{\frac{\sigma}{2}}}{2} $. 
For any class $\big(M^N_{(r_i,R_i),(v_i,V_i)}(Y)\big)_{i \in I_{\delta}}$ with $R_l< \infty$ this probability decays distinctly faster than necessary.
 
By setting the collision class parameters $R,V$ to infinity to obtain the event $\mathbf{1}_{M^N_{\infty,\infty}(X_i)}(X_j)=1$ and alter redefining $M_1:= \lceil \frac{N^{2-b_r-3b_v+\frac{\sigma}{2}}}{2}\rceil$ and thus $\mathbb{P}\big(X_1\in M^N_{R,V}(Y)\big)=1$ we can apply the considerations from above to show that $\sum_{k\in \mathcal{M}^N_b(X)}1\le N^{2-b_r-3b_v(1+\sigma)}$. Thus we get
$$\mathbb{P}\big(\sum_{k\in \mathcal{M}^N_b(X)}1\le N^{2-b_r-3b_v}\big)\le CN^{-\sigma N^{2-b_r-3b_v}} $$ 
which is small enough.\\ 
To prove assumption \eqref{unlikely set 1} we abbreviate $M_2:= \lceil  \frac{N^{\frac{\sigma}{2}}}{2}\rceil $ and it holds that
\begin{align}
& \mathbb{P}\Big(X\in \mathbb{R}^{4N}: \big(\exists j \in \{1,...,N\}: \sum_{k\neq j} \mathbf{1}_{(G^N(X_j))^c}(X_k)\geq  M_2\big)\Big)\notag \\
\le & N\mathbb{P}\Big(X\in \mathbb{R}^{4N}: \sum_{k=2}^N \mathbf{1}_{(G^N(X_1))^c}(X_k)\geq M_2\Big) \notag \\
\le & N\binom{N}{M_2}\sup_{Y\in \mathbb{R}^4}\mathbb{P}\big(Z\in \mathbb{R}^{6}: Z\in (G^N(Y))^c  \big)^{M_2} \notag \\
\le & N \frac{N^{M_2}}{M_2!}  \big(CN^{-b_r-3b_v}\big)^{M_2} \notag \\
\le & CN^{1-b_r-3b_v\lceil  \frac{N^{\frac{\sigma}{2}}}{2}\rceil }. \label{prob.est.} 
\end{align}
In total we obtain
\begin{align}
& \mathbb{P}\big(X\in \mathcal{B}^{N,\sigma}_{3,i} \big) \notag \\
\le & |I_{\sigma}|\sup_{\substack{R,V>0}}\mathbb{P}\Big(\sum_{j \in \mathcal{M}_b^N(X)}\mathbf{1}_{M^N_{R,V}(X_i)}(X_j) \geq  N^{\frac{1\sigma}{2}}\big\lceil N^{2-b_r-b_v} R\min\big(\max(R,V),1\big)^3\big\rceil \Big) \notag \\
& + \mathbb{P}\big(\sum_{k\in \mathcal{M}^N_b(X)}1\geq N^{2-b_r-b_v}\big)  \notag\\
\le &(CN^{-\frac{5\sigma}{2}} )^{\frac{N^{\frac{\sigma}{2}}}{2} } 
 \label{prob.b.3b}
\end{align}
\subsubsection{Estimate of Term \eqref{term c}} 
Now we are left with the last term (\ref{term c}) which measures the fluctuation between two good particles. 
To estimate the term we identify $ v^N_{min}=N^{-b_v}=N^{-\frac{3}{5}-\sigma}.$
Analogous to the estimates for the previous terms we apply Corollary \ref{corollary phi and  psi} and additionally subdivide the term depending on the relative velocity of the particles.
\begin{align}
& \int_{0}^t\Big(\frac{1}{N}\sum_{\substack{j\neq i\\ j\in \mathcal{M}^N_g(X)}}\Big(\big|f^{N}([\Psi^{1,N}_{s,0}(X)]_j-[\Psi^{1,N}_{s,0}(X)]_i)\big| \notag  \\
&+ \big|f^{N}(\varphi^{1,N}_{s,0}(X_j)-{\varphi^{1,N}_{s,0}}(X_i))\big|\Big)\mathbf{1}_{G^N(X_i)\cap M^N_{2N^{-\frac{2}{5}+2\sigma},\infty}(X_i)}(X_j)\Big) ds \notag \\
\le & \frac{C}{N}\frac{1}{ v^N_{min}} \sum_{j\neq i} \mathbf{1}_{G^N(X_i)\cap M^N_{2N^{-\frac{2}{5}+\sigma},N^{-\frac{1}{5}-\sigma}}(X_i)}(X_j) \notag \\
&+  \frac{C}{N}\frac{1}{N^{-\frac{1}{5}-\sigma}} \sum_{j\neq i} \mathbf{1}_{ G^N(X_i)\cap  M^N_{2N^{-\frac{2}{5}+2\sigma},\infty}(X_i)}(X_j). \label{term c estimated}
\end{align}

Corollary \ref{corollary phi and  psi} (ii) is applicable since the relative velocity values for the considered `collision classes' are of distinctly larger order than the deviation between corresponding particle trajectories of the microscopic and the auxiliary system. More precisely, we applied that $G^N(X_i)\subseteq M(^N_{ 4\delta^N_b, v^N_{min}}(X_i))^c$ where $\delta_b^N=N^{-\frac{2}{5}-2\sigma}$ if
\begin{align*}
&\max_{i\in \mathcal{M}^N_g(X)}\sup_{0\le s \le \tau^N(X)}|[\Psi^N_{s,0}(X)]_i-\varphi^N_{s,0}(X_i)|
\le 
N^{-\frac{2}{5}+2\sigma} 
\end{align*}
 This shows that the assumptions of Corollary \ref{corollary phi and  psi} (ii) are fulfilled .\\
Now we define a fourth set of `inappropriate' initial data as follows:
\begin{align*}
\begin{split}
& X\in \mathcal{B}_{4,i}^{N,\sigma}\subseteq \mathbb{R}^{4N} 
\Leftrightarrow   \sum_{j \neq i}\mathbf{1}_{M^N_{4N^{-\frac{3}{5}+2\sigma},N^{-\frac{1}{5}-\sigma}}(X_i)}(X_j)\geq  N^{\sigma}\ \land  \sum_{j \neq i}\mathbf{1}_{M^N_{4N^{-\frac{2}{5}+2\sigma},\infty}(X_i)}(X_j)\geq  N^{\frac{1}{5}+3\sigma}
\end{split} 
\end{align*}
Due to our estimates it holds for configurations belonging to the complement of this set that term \eqref{term c estimated} (and thereby \eqref{term c}) is bounded by
\begin{align*}
\frac{c}{v^N_{b}} N^{\sigma-1} + \frac{C}{N^{1-\frac{1}{5}-\sigma}} N^{\frac{1}{5}+3\sigma}
\le  
CN^{-\frac{2}{5}+2\sigma} +CN^{-1+4\sigma} \le CN^{-\frac{2}{5}+2\sigma}
\end{align*}

We shorten this time $M_1:= \lceil N^{\sigma} \rceil$, $M_2:= \lceil N^{\frac{1}{5}+3\sigma} \rceil$ and by the same justification as applied in \eqref{prob.est.} and application of Lemma \ref{Prob of group} we can estimate the probability
\begin{align*}
& \mathbb{P}\left(X\in  \mathcal{B}^{N,\sigma}_{4,i}\right) \\
\le & \frac{N^{M_1}}{M_1!}\sup_{Y\in \mathbb{R}^4}\mathbb{P}\big(X_i\in M^N_{4N^{-\frac{2}{5}+2\sigma},N^{-\frac{1}{5}-\sigma}}(Y)\big)^{M_1}   + \frac{N^{M_2}}{M_2!}\sup_{Y\in \mathbb{R}^4}\mathbb{P}\big(X_i\in M^N_{4N^{-\frac{2}{5}+2\sigma},\infty}(Y)\big)^{M_2} \\
\le & \frac{(CN)^{M_1}}{M_1!}\big(N^{(-\frac{2}{5}-2\sigma)}\big)^{M_1}\big(N^{3(-\frac{1}{5}-\sigma)}\big)^{M_1}  + C^{M_2}\frac{N^{M_2}}{(N^{\frac{1}{5}+3\sigma})^{M_2}}\big(N^{
2(-\frac{2}{5}+2\sigma)}\big)^{M_2}   \\
\le &C\big(N^{-\sigma} \big)^{N^{\sigma}} +\big( CN^{-\sigma}\big)^{N^{\frac{1}{5}+3
\sigma}} 
\end{align*} 
which for $\sigma>0$ decreases fast enough.
\subsection{Conclusion}
Due to the previous probability estimates on the unlikely sets it easily follows that for small enough $\sigma>0$ and an arbitrary $\gamma>0$ there exists a constant $C>0$ such that $$\mathbb{P}\big(\bigcup_{j\in \{1,2,3,4\}}\bigcup_{i=1}^N \mathcal{B}^{N,\sigma}_{j,i}\big)\le CN^{-\gamma}.$$ 
Further we can show, that in the two-dimensional setting, a bad particle typically does not occur. Therefore we define a fifth set
\begin{align*}
 X \in \mathcal{B}_5^{N,\sigma}
\Leftrightarrow\  X \in \mathbb{R}^{4N} : \big(\exists (i,j) \in \{1, \dots, N\}^2,\, i \neq j \text{ and } X_j \notin G^N(X_i)\big). 
\end{align*}
Its probability is bounded by
\begin{align*}
\mathbb{P}\big(X \in \mathcal{B}_5^{N,\sigma} \big)
\le \binom{N}{2} \, \mathbb{P}\big(X_2 \notin G^N(X_1)\big)
\le C N^2 \cdot N^{-b_r - 3b_v}
= C N^{-\frac{2}{5}-4\sigma}.
\end{align*}
Thus we further restrict the initial data on
\begin{align*}
 \mathcal{G}^{N,\sigma}_1:= \Big(\mathcal{B}^{N,\sigma}_5\cup \bigcup_{j\in \{1,2,3,4\}}\bigcup_{i=1}^N \mathcal{B}^{N,\sigma}_{j,i}\Big)^c.  
\end{align*}
The probability of excluded configurations full fills
$$
\mathbb{P}\Big(X \in \big(\mathcal{G}_1^{N,\sigma}\big)^c\Big) \le C N^{-\frac{2}{5} - 4\sigma}.
$$
The parameter $\sigma > 0$ can be chosen arbitrarily small, and in particular, smaller than $\frac{\epsilon}{2}$ for any given $\epsilon > 0$.
It remains to show that for such configurations the stopping time does not get triggered. 
All terms are bounded by $CN^{-\frac{2}{5}+2\sigma}$ and we are still left with \eqref{term c2}.
\subsubsection{Estimate of Term \eqref{term c2} }
For $X\in\mathcal{G}_1^{N,\sigma}$ it holds for any $i\in \lbrace,...,N\rbrace$ and a points in time $t_1,t\in \left[0,T\right]$ that 
\begin{align*}
& \big|\int_{t_1}^t\Big(\frac{1}{N}\sum_{j=1}^N g^{N}(\varphi^{1,N}_{s,0}(X_j)-{\varphi^{1,N}_{s,0}}(X_i))\mathbf{1}_{G^N(X_i)}(X_j)\\
&  -\int_{\mathbb{R}^4} g^{N}(\varphi^{1,N}_{s,0}(Y)-{\varphi^{1,N}_{s,0}}(X_i))\mathbf{1}_{G^N(X_i)}(Y)k_0(Y)
d^4Y\Big)ds\Big|\le 1
\end{align*}
by the definition of $\mathcal{B}^{N,\sigma}_{2,i}$
and thus for $N>1$ and $t_1\le t$ we receive 
\begin{align}
& \int_{t_1}^t\frac{1}{N}\sum_{j=1}^N g^{N}(\varphi^{1,N}_{s,0}(X_j)-{\varphi^{1,N}_{s,0}}(X_i))\mathbf{1}_{G^N(X_i)}(X_j)ds \notag \\
\le & 1+\int_{t_1}^t\int_{\mathbb{R}^4} g^{N}(\varphi^{1,N}_{s,0}(Y)-{\varphi^{1,N}_{s,0}}(X_i))\mathbf{1}_{G^N(X_i)}(Y)k_0(Y)d^4Yds. \notag \\
\le & 1+ C\sup_{t_1\le s\le t}\int_{\mathbb{R}^3}\min\big(N^{2\beta},\frac{1}{|Y-{\varphi^{1,N}_{s,0}}(X_i)|^2}\big)\widetilde{k}^N_s(Y)d^2Y \notag \\
\le &1+ C\ln(N)(t-t_1). \label{est.sum.g}
\end{align}
For times $t_1\le t$  it holds that 
\begin{align}\label{Delta_g}
\Delta^N_g(t,X)\le \Delta^N_g(t_1,X)+\int_{t_1}^t\delta^N_g(s,X)ds.
\end{align}
If we choose for some constant $C>0$ the subsequent sequence of time steps $t^*:= t_{n+1}-t_n= \frac{C}{\sqrt{\ln(N)}}$ with
\begin{align*}
& t_n=n\frac{C}{\sqrt{\ln(N)}} \text{ for }n\in \{0,...,\lceil \frac{\sqrt{\ln(N)}}{C}\tau^N(X)\rceil-1\},\\
& t_{\lceil \frac{\sqrt{\ln(N)}}{C}\tau^N(X)\rceil}=\tau^N(X)
\end{align*}
\eqref{Delta_g} implies that for $ t_n\le t\le \tau^N(X)$ 
\begin{align*}
\Delta^N_g(t,X)\le \sum_{k=1}^n\sup_{0\le s \le t_k}\delta^N_g(s,X)t^*+\int_{t_n}^t\delta^N_g(s,X)ds.
\end{align*}
Thus for any good particle $i\in \mathcal{M}^N_g(X)$ and all times $t\in [t_n,t_{n+1}]$ with $n\in \{0,...,\lceil \frac{\sqrt{\ln(N)}}{C}\tau^N(X)\rceil-1\}$ holds:
\begin{align}\label{delta estimate}
&\delta^N_g(t,X)\notag \\
\le & \delta^N_g(t_n,X)+\max_{i\in \mathcal{M}^N_g(X)} \big|\int_{t_n}^t\Big(\frac{1}{N}\sum_{j\neq i}f^{N}([\Psi^{1,N}_{s,0}(X)]_i-[\Psi^{1,N}_{s,0}(X)]_j) \notag \\
&   -f^{N}*\widetilde{k}^N_s({\varphi^{1,N}_{s,0}}(X_i))\Big)ds\big| \notag  \\
\le &\max_{i\in \{1,...,N\}}\int_{t_n}^t \frac{2}{N}\sum_{j=1}^N g^{N}(\varphi^{1,N}_{s,0}(X_j)-{\varphi^{1,N}_{s,0}}(X_i))\mathbf{1}_{G^N(X_i)}(X_j)\underbrace{\Delta^N_g(s,X)}_{\le \Delta^N_g(t,X)}ds  \notag \\
&+\delta^N_g(t_n,X) +CN^{-\frac{2}{5}+2\sigma} \notag \\
\le &\big(1+ C\ln(N)\underbrace{(t-t_n)}_{\le t_{n+1}-t_n=t^*}\big)\big( \sum_{k=1}^n\sup_{0\le s \le t_k}\delta^N_g(s,X)t^*+\int_{t_n}^t\delta^N_g(r,X)dr\big)\notag \\
& +\delta^N_g(t_n,X)+CN^{-\frac{2}{5}+2\sigma} \notag \\
\le &\big(1+ C\ln(N)t^*\big)\int_{t_n}^t\delta^N_g(r,X) dr\notag \\
&  + \big(2+ C\ln(N)(t^*)^2\big)\sum_{k=1}^n\sup_{0\le s \le t_k}\delta^N_g(s,X) +CN^{-\frac{2}{5}+2\sigma}. 
\end{align} 
For all times $t\in [t_n,t_{n+1}]$ Lemma \ref{Gronwall Lemma} implies that 
\begin{align*}
 \delta^N_g(t,X)
\le  \left(\left(2+ C\ln(N)(t^*)^2\right)\sum_{k=1}^n\sup_{0\le s \le t_k}\delta^N_g(s,X) +CN^{-\frac{2}{5}+2\sigma}\right) e^{t^*+ C\ln(N){(t^*)}^2}.
\end{align*}
For times $t\in [0,t_n]$, we can exchange $\delta^N_g(t,X)$ by its supremum over $[0,t_{n+1}]$ and thus for $t^*=\frac{C}{\sqrt{\ln(N)}}$ with $C:= \min\big(\frac{1}{\sqrt{C}},1\big)$ \eqref{delta estimate} implies 
\begin{align*}
 & \sup_{0\le s \le t_{n+1}}\delta^N_g(s,X)
\le   2e^2\sum_{k=1}^n\sup_{0\le s \le t_k}\delta^N_g(s,X) +Ce^2N^{-\frac{2}{5}+2\sigma}. 
\end{align*}
It follows that
\begin{align*}
& \sup_{0\le s \le t_n}\delta^N_g(s,X)
\le  Ce^2N^{-\frac{2}{5}+2\sigma}(2e^2+1)^{n-1}, 
\end{align*}
for $n\in \lbrace,...,\lceil \frac{\sqrt{\ln(N)}}{C}\tau^N(X)\rceil\rbrace$.
If this relation is valid for $k\in \{1,...,n\}$, $n\in \N$ then it holds that
\begin{align*}
& \sup_{0\le s \le t_{n+1}}\delta^N_g(s,X)\\
\le &  2e^2\sum_{k=1}^n \underbrace{\sup_{0\le s \le t_k}\delta^N_g(s,X) }_{\le   Ce^2N^{-\frac{2}{3}+2\sigma}(2e^2+1)^{k-1}}+Ce^2N^{-\frac{2}{5}+2\sigma} \\
\le & 2e^2\big(  Ce^2N^{-\frac{2}{5}+2\sigma}\frac{(2e^2+1)^n-1}{(3e^2+1)-1}\big)+Ce^2N^{-\frac{2}{5}+2\sigma}\\
=&  Ce^2N^{-\frac{2}{5}+2\sigma}(2e^2+1)^n.
\end{align*}
For $N\in \mathbb{N}$ large enough the velocity deviation is bounded by
\begin{align*}
\sup_{0\le s \le \tau^N(X)}\delta^N_g(s,X)
\le &  Ce^2N^{-\frac{2}{5}+2\sigma}(2e^2+1)^{\lceil \frac{\sqrt{\ln(N)}}{C}\tau^N(X)\rceil-1} \notag \\
\le & Ce^2N^{-\frac{2}{5}+2\sigma}N^{ \frac{\ln(2e^2+1)}{\ln(N)}\frac{\sqrt{\ln(N)}}{C}T} \notag \\
\le & CN^{-\frac{2}{5}+2\sigma}
\end{align*}
 and this implies
\begin{align*}
& \max_{i\in \mathcal{M}^N_g(X)}\sup_{0\le s \le \tau^N(X)}\left|[\Psi^N_{s,0}(X)]_i-\varphi^N_{s,0}(X_i)\right|  
\le  CN^{-\frac{2}{5}+2\sigma}.
\end{align*} 

\subsection{Propagation of chaos}
We conclude the proof of Theorem \ref{maintheorem} by showing that for $N>1$
\begin{align}
\sup\limits_{x\in \mathbb{R}^4}\sup_{0\le s \le T}|\varphi_{s,0}^{1,N}(x)-{\varphi^{1,\infty}_{s,0}}(x)|\le \frac{e^{C\sqrt{\ln(N)}}}{N^{4}}. \label{Distance phi, phiinf}
\end{align}
For simplicity we denote $\Delta_N(t):= \sup\limits_{x\in \mathbb{R}^4}\sup_{0\le s \le t}|\varphi_{s,0}^{1,N}(x)-{\varphi^{1,\infty}_{s,0}}(x)|$. 
For $x\in \mathbb{R}^4$ and $N\in \N\setminus \{1\}$ and a point in time $t\in [0,T]$, such that $\Delta_N(t)\le N^{-2}$, it holds that
\begin{align*}
 & |\varphi_{t,0}^{2,N}(x)-{\varphi^{2,\infty}_{t,0}}(x)|\\
\le &\big|\int_0^t\int_{\mathbb{R}^4}\big(f^N(\varphi^{1,N}_{s,0}(x)-{\varphi^{1,N}_{s,0}}(y))-f^{\infty}({\varphi^{1,\infty}_{s,0}}(x)-{\varphi^{1,\infty}_{s,0}}(y))\big)k_0(y)d^4Yds\big|\\
\le &\big|\int_0^t\int_{\mathbb{R}^4}\big(f^N(\varphi^{1,N}_{s,0}(x)-{\varphi^{1,N}_{s,0}}(y))-f^N({\varphi^{1,\infty}_{s,0}}(x)-{\varphi^{1,\infty}_{s,0}}(y))\big)k_0(y)d^4Yds\big|\\
 & +\big|\int_0^t\int_{\mathbb{R}^4}\big(f^N(\varphi^{1,\infty}_{s,0}(x)-{\varphi^{1,\infty}_{s,0}}(y))-f^{\infty}({\varphi^{1,\infty}_{s,0}}(x)-{\varphi^{1,\infty}_{s,0}}(y))\big)k_0(y)d^4Yds\big|\\
\le & \int_{0}^t2\Delta_N(s)\int_{\mathbb{R}^4} g^N({\varphi^{1,N}_{s,0}}(x)-{\varphi^{1,N}_{s,0}}(y))k_0(y)d^4Yds\\
 & +\big|\int_0^t\int_{\mathbb{R}^4}\big(f^N({\varphi^{1,\infty}_{s,0}}(x)-{^1y})-f^{\infty}({\varphi^{1,\infty}_{s,0}}(x)-{^1y})\big)k^{\infty}_s(y)d^4Yds\big|\\
\le & C\ln(N)\int_{0}^t\Delta(s) ds+
\big|\int_0^t\int_{\mathbb{R}^4} \frac{^1y}{|^1y|^{2}}\mathds 1_{(0,N^{-2}]}(|^1y|) k^{\infty}_s(y+{\varphi}^{\infty}_{s,0}(x))d^4Yds\big|\\
&+ \big|\int_0^t\int_{\mathbb{R}^4} {^1y} N^{2}\mathds 1_{(0,N^{-2}]}(|^1y|) k^{\infty}_s(y+{\varphi}^{\infty}_{s,0}(x))d^4Yds\big|
\end{align*}
since $g^N(q)\le C\min\big( N^{4},\frac{1}{|q|^{2}}\big)$ for all $q\in \mathbb{R}^2$.
Further
\begin{align*}
& \big|\int_0^t\int_{\mathbb{R}^4} \frac{^1y}{|^1y|^{2}}\mathds 1_{(0,N^{-2}]}(|^1y|)k^{\infty}_s((0,{^2y})+{\varphi_{s,0}^{\infty}}(x))d^4Yds\big|\\
=&\big|\int_0^t\widetilde{k}^{\infty}_s({^1\varphi_{s,0}^{\infty}}(x))\int_{\mathbb{R}^2} \frac{q}{|q|^{2}}\mathds 1_{(0,N^{-2}]}(|q|)d^2qds\big|= 0,
\end{align*}
and thus we can estimate
\begin{align*}
&\big|\int_0^t\int_{\mathbb{R}^4} \frac{^1y}{|^1y|^{2}}\mathds 1_{(0,N^{-2}]}(|^1y|) k^{\infty}_s(y+{\varphi}^{\infty}_{s,0}(x))d^4Yds\big|\notag \\
= &\big|\int_0^t\int_{\mathbb{R}^4} \frac{^1y}{|^1y|^{2}}\mathds 1_{(0,N^{-2}]}(|^1y|) \Big(\big(k^{\infty}_s(y+{\varphi}^{\infty}_{s,0}(x))-k^{\infty}_s((0,{^2y})+{\varphi}^{\infty}_{s,0}(x))\big) \notag \\
&+k^{\infty}_s((0,{^2y})+{\varphi}^{\infty}_{s,0}(x))\Big)d^4Yds\big| \notag \\
\le  & \int_0^t\int_{\mathbb{R}^4} \frac{1}{|^1y|}\mathds 1_{(0,N^{-2}]}(|^1y|)\Big(\big|k^{\infty}_s(y+{\varphi}^{\infty}_{s,0}(x))-k^{\infty}_s((0,{^2y})+{\varphi}^{\infty}_{s,0}(x))\big|\Big)d^4Yds. 
\end{align*}
Further $|\nabla k_0(x)|\le \frac{C}{(1+|x|)^{2+\delta}}$ and by application of Lemma \ref{Lemma distance same order} it follows for arbitrary $z\in \mathbb{R}^4$ and $s\in \left[0,T\right]$ that
\begin{align*}
&\big|k^{\infty}_s(y+z)-k^{\infty}_s((0,{^2y})+z)\big|\mathds 1_{(0,N^{-2}]}(|^1y|) \\
=& \big|k_0(\varphi^{\infty}_{0,s}(y+z))-k_0(\varphi^{\infty}_{0,s}((0,{^2y})+z))\big| \mathds 1_{(0,N^{-2}]}(|^1y|)   \\
\le & \sup_{z'\in \overline{\varphi^{\infty}_{0,s}(y+z)\varphi^{\infty}_{0,s}((0,{^2y})+z)}}|\nabla k_0(z')| \cdot \mathds 1_{(0,N^{-\beta}]}(|^1y|)\Big( \big|\varphi^{\infty}_{0,s}(y+z)-\varphi^{\infty}_{0,s}((0,{^2y})+z)\big|\Big) \notag \\
 \le &   \sup_{z'\in \overline{\varphi^{\infty}_{0,s}(y+z)\varphi^{\infty}_{0,s}((0,{^2y})+z)}}\frac{C}{(1+|z'|)^{2+\delta}}  \cdot \mathds 1_{(0,N^{-2}]}(|^1y|)\Big(C\big|(y+z)-\big((0,{^2y})+z \big) \big|\Big) \notag \\
\le & \sup_{y'\in \mathbb{R}^2:|y'|\le N^{-\beta}}\sup_{z'\in \overline{\varphi^{\infty}_{0,s}((y',{^2y})+z)\varphi^{\infty}_{0,s}((0,{^2y})+z)}}\frac{CN^{-2}}{(1+|z'|)^{2+\delta}} 
\end{align*}
where $\overline{xy}:= \{(1-\eta)x+\eta y\in \mathbb{R}^4: \eta\in [0,1]\}$ for $x,y\in \mathbb{R}^4$.
This term decreases like $\frac{CN^{-2}}{(1+|^2y|)^{2+\delta}}$ as $|^2y|$ increases.  
Thus it follows that for arbitrary $z\in \mathbb{R}^4$ 
 \begin{align*}
&\big|\int_{\mathbb{R}^4} \frac{^1y}{|^1y|^{2}}\mathds 1_{(0,N^{-2}]}(|^1y|) k^{\infty}_s(y+z)d^4Y\big|\\
\le  &\int_{\mathbb{R}^2} \frac{1}{|^1y|}\mathds 1_{(0,N^{-2}]}(|^1y|)d^2(^1y) \cdot \int_{\mathbb{R}^2} \sup_{y'\in \mathbb{R}^2:|y'|\le N^{-2}}\sup_{z'\in \overline{\varphi^{\infty}_{0,s}((y',{^2y})+z)\varphi^{\infty}_{0,s}((0,{^2y})+z)}}\frac{CN^{-\beta}}{(1+|z'|)^{2+\delta}}d^2(^2y)\\
\le & \frac{C}{N^{2}}.
\end{align*}
Further it holds that for any $x\in \mathbb{R}^4$ 
\begin{align*}
\sup_{0\le s \le t}|\varphi_{s,0}^{1,N}(x)-{\varphi^{1,\infty}_{s,0}}(x)|
\le  \int_0^t|\varphi_{s,0}^{2,N}(x)-{\varphi^{2,\infty}_{s,0}}(x)| ds
\le  C\ln(N)\int_{0}^t\int_0^s\Delta_N(r)drds+\frac{Ct}{N^{2}}. 
\end{align*}
By Lemma \ref{Gronwall Lemma} it follows that $$ \Delta_N(t)=\sup_{x\in \mathbb{R}^4}\sup_{0\le s \le t}|{\varphi_{s,0}^{1,N}}(x)-{\varphi^{1,\infty}_{s,0}}(x)|\le \frac{Cte^{\sqrt{C\ln(N)}t}}{N^{2}}.$$ 
For arbitrarily large times $t$ provided that $N\in \mathbb{
N}$ is large enough we can conclude $\Delta_N(t)\le N^{-2}$.
Applying this bound on $$|\varphi_{t,0}^{2,N}(x)-{\varphi^{2,\infty}_{t,0}}(x)| \notag 
\le C\ln(N)\int_{0}^t\Delta_N(s)ds+CN^{-2}$$ yields for $N$ large enough the claimed result 
\begin{align}
\sup\limits_{x\in \mathbb{R}^4}\sup_{0\le s \le T}|\varphi_{s,0}^N(x)-{\varphi^{\infty}_{s,0}}(x)|\le
\frac{ e^{C\sqrt{\ln(N)}}}{N^{2}}. 
\end{align}

\newpage

\end{document}